\documentclass[11pt]{amsart}  
\pdfoutput=1
\usepackage{enumitem}
\usepackage{times}
\usepackage{url}
\usepackage[leqno]{amsmath}   
\usepackage{cancel}
\usepackage{amssymb,mystyle,enumitem}
\usepackage{MnSymbol,wasysym}   
\usepackage{tikzsymbols}
\usepackage{tikz-cd}
\usepackage{tabularx}

  \topmargin=-.4in
  \oddsidemargin=-.5in
  \evensidemargin=-.5in
  \textwidth=7.4in
  \textheight=9.3in

\usepackage{tikz-cd}
\usepackage[colorlinks=true,linkcolor=blue]{hyperref}
\usepackage{placeins, multirow}


  \usetikzlibrary{decorations.markings}
  
\tikzset{double line with arrow/.style args={#1,#2}{decorate,decoration={markings,%
mark=at position 0 with {\coordinate (ta-base-1) at (0,1pt);
\coordinate (ta-base-2) at (0,-1pt);},
mark=at position 1 with {\draw[#1] (ta-base-1) -- (0,1pt);
\draw[#2] (ta-base-2) -- (0,-1pt);
}}}}


 \newcommand{\Sp}{{\mathrm{Sp}}}
    
\newcommand{\SO}{{\mathrm{SO}}}

\DeclareMathOperator{\proj}{proj}

\newcommand{\scoeff}{\sigma}

\newcommand{\absW}{\tilde{W}}
\newcommand{\Wtame}{W^{\dtame}}

\newcommand{\Wtamesimnon}{W^{\dtame}_{\sim}}

\newcommand{\Ctame}{\mathcal{C}^{\dtame}}

\DeclareMathOperator{\aff}{aff}

\usepackage{scalerel}

\DeclareMathOperator{\dtame}{t}
\DeclareMathOperator{\ee}{e}
\DeclareMathOperator{\Ad}{Ad}

\DeclareMathOperator{\X}{X}

\usepackage{calc}
\newsavebox\CBox
\newcommand\hcancel[2][0.5pt]{%
  \ifmmode\sbox\CBox{$#2$}\else\sbox\CBox{#2}\fi%
  \makebox[0pt][l]{\usebox\CBox}%
  \rule[0.5\ht\CBox-#1/2]{\wd\CBox}{#1}}

  \renewcommand{\gg}{{\mathfrak{g}}}
  \newcommand{\bsfT}

 \newtheoremstyle{TheoremNum}
        {\topsep}{\topsep}              
        {\itshape}                      
        {}                              
        {\bfseries}                     
        {.}                             
        { }                             
        {\thmname{#1}\thmnote{ \bfseries #3}}
    \theoremstyle{TheoremNum}
    \newtheorem{thmn}{Theorem}
     \newtheorem{corn}{Corollary}


\usepackage{amssymb}
\usepackage{mathtools}
\usetikzlibrary{chains}
\usetikzlibrary{decorations.pathreplacing, calligraphy}

\tikzset{node distance=2em, ch/.style={circle,draw,on chain,inner sep=2pt},chj/.style={ch,join},every path/.style={shorten >=4pt,shorten <=4pt},line width=1pt,baseline=-1ex}

\tikzset{noch/.style={circle,draw,on chain,inner sep=2pt, transparent},nochj/.style={noch,join},every path/.style={shorten >=4pt,shorten <=4pt}}
  

\let\dlabel=\alabel

\let\elabel=\nolabel

\newcommand{\enode}[2][nochj]{%
\node[#1,label={center:\elabel{#2}}] {};
}

\newcommand{\unode}[2][chj]{%
\node[#1,label={above:\dlabel{#2}}] {};
}

\newcommand{\unodenj}[1]{%
\unode[ch]{#1}
}

\newcommand{\enodenj}[1]{%
\enode[noch]{#1}
}

\newcommand{\dnodebr}[1]{%
\node[chj,label={right:\dlabel{#1}}] {};
}

\newcommand{\enodebr}[1]{%
\node[nochj,label={center:\elabel{#1}}] {};
}

\newcommand{\dydots}{%
\node[chj,draw=none,inner sep=1pt] {\dots};
}

\newcommand{\QRightarrow}{%
\begingroup
\tikzset{every path/.style={}}%
\tikz \draw (0,3pt) -- ++(1em,0) (0,1pt) -- ++(1em+1pt,0) (0,-1pt) -- ++(1em+1pt,0) (0,-3pt) -- ++(1em,0) (1em-1pt,5pt) to[out=-75,in=135] (1em+2pt,0) to[out=-135,in=75] (1em-1pt,-5pt);
\endgroup
}


\author{Stephen DeBacker}
\address{University of Michigan\\
Ann Arbor, MI 48109-1043, USA}
\email{stephendebacker@umich.edu}

\author{Jacob Haley}
\address{Vanderbilt University\\
Nashville, TN 37240, USA}
\email{jacob.a.haley@Vanderbilt.Edu}

 \makeatletter
\let\@wraptoccontribs\wraptoccontribs
\makeatother

  \subjclass[2020]{Primary 20G07; Secondary 20F55, 20E45}

  \date{\today}

\title{Kac diagrams for elliptic Weyl group elements}

\begin{document}

\begin{abstract}
Suppose $\gg$ is a semisimple complex Lie algebra and $\hh$ is a Cartan subalgebra of $\gg$.  To the pair $(\gg,\hh)$ one can associate both a Weyl group and a set of Kac diagrams.  There is a natural map  from the set of elliptic  conjugacy classes in the Weyl group to the set of  Kac diagrams.  In both this setting and the twisted setting, this paper (a) shows that this map is injective and (b) explicitly describes this map's image.
\end{abstract}

\maketitle

\section*{Introduction}
Suppose $\gg$ is a semisimple complex Lie algebra and $\hh$ is a Cartan subalgebra of $\gg$.  To the pair $(\gg,\hh)$ one can associate both a Weyl group and a set of Kac diagrams.  There is a natural map  from the set of elliptic  conjugacy classes in the Weyl group to the set of  Kac diagrams.   In both this setting and the twisted setting, this paper (a) shows that this map is injective and (b) explicitly describes this map's image.

\subsection*{The map \texorpdfstring{$\psi_G$}{psiG}}
Let $G$ denote a connected semisimple complex group such that the Lie algebra of  $G$ is $\gg$. Let $A$ denote the maximal torus in $G$ such that the Lie algebra of $A$ is $\hh$. Fix a pinning $(A,B, \{X_\alpha \, | \, \alpha \in \Delta \})$  of $G$ where  $B$ is a Borel subgroup of $G$ that contains $A$, $\Delta$ is the set of simple roots of $G$ with respect to $A$ and $B$, and $X_\alpha$ is an element of the Lie algebra of the root group corresponding to $\alpha \in \Delta$.  Fix $\vartheta \in \Aut(\gg)$ that stabilizes this pinning.  The automorphism $\vartheta$ is called a pinned automorphism and has the following properties: $\vartheta(A) = A$, $\vartheta(B) = B$, and $\vartheta$ permutes the elements of the set  $\{X_\alpha \, | \, \alpha \in \Delta \}$.    If $H$ is a group on which $\vartheta$ acts, then for $x,y \in H$ we say that $x$ is $\vartheta$-conjugate to $y$ in $H$ provided that there exists $h \in H$ such that $x = h y \vartheta(h)\inv$.

Let $\absW$ denote the Weyl group $N_G(A)/A$.  Since $\absW$ is generated by the simple reflections $w_\alpha$ for $\alpha \in \Delta$, we have that $\vartheta$ acts on $\absW$.  
An element $w \in \absW$ is said to be $\vartheta$-elliptic provided that the $\vartheta$-conjugacy class of $w$ in $\absW$ does not intersect any {proper} subgroup of $\absW$ of the form $\absW_\theta$ for $\theta \subsetneq \Delta$ with $\vartheta(\theta) = \theta$.   Here $\absW_\theta$ is the subgroup of $\absW$ generated by the simple reflections $w_\beta$ for $\beta \in \theta$.

Suppose $w_1, w_2 \in \absW$ are $\vartheta$-elliptic.  Let $n_j \in N_G(A)$ be a lift of $w_j$.
Since all lifts of $w_j$ into $N_G(A)$ are $\vartheta$-conjugate by an element of $A$  (see, for example,~\cite[Lemma~1.1.3]{adams-he-nie:from},~\cite[Remark~4.1]{debacker-reeder:depth-zero}, or~\cite[Theorem~1]{fedotov-affine} for the general case and~\cite[Lemma~1.3]{Levy:KW} or~\cite[Section 4.1]{schellekens-warner:weylI} for the non-twisted case), we conclude that  $w_1$ is $\vartheta$-conjugate to $w_2$ in $W$ if and only if $n_1$ is $\vartheta$-conjugate to $n_2$  in $N_G(A)$.  Moreover, both $n_1 \rtimes \vartheta$ and $n_2 \rtimes \vartheta$ have finite order in $N_G(A) \rtimes \langle \vartheta \rangle$.  Thus, there exists a function $\psi_G$ from $\absW^{\ee}_{\sim}$, the set of $\vartheta$-conjugacy classes of $\vartheta$-elliptic elements in $\absW$,  to $G^{\vartheta-\tor}_{\sim}$, the set of $\vartheta$-conjugacy classes of $\vartheta$-torsion elements in $G$.  An element $g \in G$ is called a $\vartheta$-torsion element provided that $g \rtimes \vartheta$ has finite order in $G \rtimes \langle \vartheta \rangle$.
In Section~\ref{sec:psiinjective} we prove

\vspace{1em}

\begin{thmn}[\ref{thm:psiinjects}] 
The map $\psi_G \, \colon \, \absW^{\ee}_{\sim} \rightarrow G^{\vartheta-\tor}_{\sim}$ is injective.
\end{thmn}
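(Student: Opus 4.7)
The plan is to translate the injectivity into a statement about $\sigma$-stable maximal tori and extract the conclusion from the $\vartheta$-elliptic hypothesis. Set $m_i := n_i \rtimes \vartheta \in G \rtimes \langle \vartheta \rangle$. The equality $\psi_G([w_1]) = \psi_G([w_2])$ furnishes $g \in G$ with $g m_1 g\inv = m_2$, equivalently $g n_1 \vartheta(g)\inv = n_2$. The target is to produce such a $g$ inside $N_G(A)$; once this is done, the cited uniqueness of lifts of a $\vartheta$-elliptic Weyl element up to $A$-twisted conjugacy immediately forces $w_1$ and $w_2$ to be $\vartheta$-conjugate in $\absW$.

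Since $m_i$ has finite order, it is semisimple in the complex algebraic group $G \rtimes \langle \vartheta \rangle$. Both $A$ and $gAg\inv$ are maximal tori stable under $\sigma := \Ad(m_2)|_G$: $A$ by construction, and $gAg\inv$ because $\sigma(gAg\inv) = g m_1 A m_1\inv g\inv = gAg\inv$. Steinberg's classification attaches to each $\sigma$-stable maximal torus $T$ a well-defined $\sigma$-twisted conjugacy class in $\absW$, namely the image of $h\sigma(h)\inv$ for any $h \in G$ with $hTh\inv = A$; here $\sigma$ acts on $\absW$ by $w \mapsto w_2 \vartheta(w) w_2\inv$. Choosing $h = 1$ for $T = A$ and $h = g\inv$ for $T = gAg\inv$, a direct computation using $g n_1 \vartheta(g)\inv = n_2$ yields the Steinberg invariants $[1]_\sigma$ and $[n_1 n_2\inv]_\sigma$, respectively. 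Unpacking $[1]_\sigma = [n_1 n_2\inv]_\sigma$ in $\absW$ is precisely the existence of $u \in \absW$ with $u w_2 \vartheta(u)\inv = w_1$, i.e.\ the desired $\vartheta$-conjugacy. Injectivity thus reduces to showing that $A$ and $gAg\inv$ lie in a common $Z_G(m_2)$-orbit.

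To obtain this $Z_G(m_2)$-conjugacy, I would Bruhat-decompose $g = u_1 \dot v u_2$ relative to the pinned Borel $B$ (recalling $\vartheta(B) = B$), substitute into $g n_1 \vartheta(g)\inv = n_2$, and compare the toral and unipotent contributions root-by-root. The $\vartheta$-elliptic hypothesis enters essentially in two ways: the map $a \mapsto a \cdot (w_1\vartheta)(a)\inv$ on $A$ is surjective, absorbing any toral ambiguity introduced by $u_1, u_2$; and the invertibility of $1 - w_1\vartheta$ on $\mathfrak{h}$ forces the residual unipotent pieces to cancel in each root space. The net effect is to modify $g$ within its coset $g \cdot Z_G(m_1)$ so that it lands in $N_G(A)$.

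The main obstacle is the unipotent cancellation. Steinberg's parameterization cleanly isolates the cohomological obstruction as the class $[n_1 n_2\inv]_\sigma$, but the classification alone permits several $Z_G(m_2)$-orbits of $\sigma$-stable maximal tori; therefore the $\vartheta$-elliptic hypothesis must be used in a non-formal way. I expect the argument to require careful root-by-root tracking inside the Bruhat cell containing $g$, leveraging both the toral surjectivity and the Cartan invertibility above to successively eliminate unipotent terms, and this is where the genuine technical content of the proof lies.
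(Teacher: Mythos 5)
Your reformulation is correct as far as it goes, but it is essentially a restatement of the problem, and the theorem's entire content sits in the step you leave unproven. Producing $z \in Z_G(m_2)$ with $z(gAg^{-1})z^{-1} = A$ is the same as producing an element $zg \in N_G(A)$ with $(zg)\,n_1\,\vartheta(zg)^{-1} = n_2$, at which point the cited uniqueness of lifts finishes the argument directly --- so the Steinberg-invariant paragraph, while internally correct ($Z_G(m_2)$-conjugate $\sigma$-stable tori do have equal twisted invariants, and $[1]_\sigma = [n_1n_2^{-1}]_\sigma$ does unwind to $w_1 \sim_\vartheta w_2$), adds nothing beyond re-expressing the goal. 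The proposed route to the conjugacy claim --- Bruhat-decompose $g = u_1\dot{v}u_2$ and cancel unipotent contributions root by root using surjectivity of $a \mapsto a\,(w_1\vartheta)(a)^{-1}$ on $A$ and invertibility of $1 - w_1\vartheta$ on $\mathfrak{h}$ --- is only a plan, and you yourself defer ``the genuine technical content'' to it. Those two facts are precisely what give uniqueness of lifts up to $A$-twisted conjugacy for a \emph{fixed} elliptic class; they say nothing a priori about the unknown Weyl component $v$ of $g$, and the equation $u_1\dot{v}u_2\,n_1\,\vartheta(u_2)^{-1}\vartheta(\dot{v})^{-1}\vartheta(u_1)^{-1} = n_2$ does not decompose root by root in any evident way, since $\Ad(n_1)\circ\vartheta$ permutes the relevant root groups nontrivially. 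Note also that over $\C$ there is no Lang-theorem bijection between $Z_G(m_2)$-orbits of $\sigma$-stable maximal tori and $\sigma$-twisted classes in $\absW$ (here the tori have finite $\sigma$-fixed-point groups because the classes are elliptic), so no soft classification result can be invoked to close the gap; and since the statement is false when only one of $w_1,w_2$ is $\vartheta$-elliptic, any uniform argument must use ellipticity in a way that goes beyond the two formal consequences you list.

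For comparison, the paper does not attempt a uniform argument: after reducing to the almost-simple case and showing injectivity of $\psi_G$ is an isogeny invariant, it proceeds type by type. For the classical types it writes down an explicit lift $n_{\vec{\ell}}$ for each elliptic (or $\vartheta$-elliptic) class and computes the characteristic polynomial of $n_{\vec{\ell}}$, respectively of $\Ad(n_{\vec{\ell}})\circ\vartheta$, on the standard representation or on $\gg$; this is a $\vartheta$-conjugation invariant of the class in $G$ that recovers the partition, hence separates the classes. For the exceptional and triality types it quotes the tables of Adams--He--Nie together with bijectivity of $\chi_G$ in the adjoint case. A worked-out version of your cancellation argument would supersede all of this case analysis; as written, the proposal does not establish the theorem.
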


\vspace{1em}
\noindent
Note that there is something to prove here.  From the given  we know that we have a well-defined map from $\absW^{\ee}_{\sim}$ to the $\vartheta$-conjugacy classes of $\vartheta$-torsion elements in $N_G(A)$.    To prove the theorem, one needs to show that if 
$n_1, n_2 \in N_G(A)$ are lifts of $w_1, w_2 \in \absW^{\ee}$ such that $n_1$ and $n_2$ are $\vartheta$-conjugate in $G$, then $n_1$ and $n_2$ are $\vartheta$-conjugate in $N_G(A)$; this is false if we don't require {both} $w_1$ and $w_2$ to be $\vartheta$-elliptic.
The proof of Theorem~\ref{thm:psiinjects} involves case-by-case checking.

\subsection*{Kac diagrams and the map \texorpdfstring{$\chi_G$}{chi G}}    Reeder provides an absolutely beautiful treatment of Kac diagrams in~\cite{reeder:torsion}. Throughout this paper we have tried to align our treatment of Kac diagrams with the treatment found there.

Let $\Phi = \Phi(G,A)$ denote the set of roots of $G$ with respect to $A$.  The  perfect pairing $\langle \, , \, \rangle \, \colon \, \X_*(A) \times \X^*(A)$ allows us to regard $\alpha \in \Phi$ as a linear function on  $V = \X_*(A) \otimes \R$.  
If $S$ denotes the maximal compact torus in $A$, then the map $\exp \, \colon \, V \rightarrow S$, defined by $\alpha(\exp(v)) = e^{2 \pi i \langle \alpha , v \rangle}$ for all $\alpha \in \Phi$, is a surjective map.   The hyperplanes $H_{\alpha,n}$ in $V$ defined by the kernels of the affine roots $v \mapsto \alpha(v) - n$ for $\alpha \in \Phi$ and $n \in \Z$ produce a simplicial decomposition of $V$, and the connected components in the complement of these hyperplanes are called alcoves.  The affine Weyl group $\absW^{\aff} := \X_*(A) \rtimes \absW  $ acts on $V$.  This action preserves the hyperplane structure and $\absW^{\aff}$ acts transitively on the alcoves. Our pinning $\{X_\alpha \, | \, \alpha \in \Delta\}$ determines a unique alcove $C$ that contains the origin of $V$ in its closure.  By uniqueness,  we have  $\vartheta(C) = C$.
Let $A^\vartheta$ (resp. $G^\vartheta$) denote the connected component of the $\vartheta$-fixed points of $A$ (resp. $G$).

 Suppose $g$ is a finite order element of $G$, say of order $m$. Fix a primitive $m^{\text{th}}$ root of unity $\xi$.  There exists $h \in G$ such that $h\inv g \vartheta(h)$ lives in the torus $ A^{\vartheta}$ (see, for example, Lemma~\ref{lem:sigmaconj}).   Thus, there exists $\lambda \in \X_*(A^\vartheta)$ such that $h \inv g \vartheta(h) = \lambda(\xi)$.  
After conjugating $\lambda(\xi)$ by an element of  the group $N_{G^{\vartheta}}(A)$
we may assume that $\lambda/m$ belongs to the closure of
$C_\vartheta$, the alcove for $G^\vartheta$ that contains the origin and intersects $C^\vartheta$ nontrivially. 
A Kac diagram associated to $g$ encodes the location of $\lambda/m$  via a labeling of the affine Dynkin diagram for $G^\vartheta$.
See Section~\ref{sec:kacdiagrams} for the technical details of this labeling.

  Let $\kappa$ denote the set of Kac diagrams that arise as above, and we let $\kappa_\sim$ denote the equivalence classes in $\kappa$ determined by automorphisms of the affine Dynkin diagram of $G^\vartheta$ arising from the action of $\Stab_{\absW^{\aff}_{\vartheta}}(C_{\vartheta})$.  Here   $\absW^{\aff}_{\vartheta} = (\proj_{V^\vartheta} \X_*(A)) \rtimes \absW^\vartheta$ is the affine Weyl group for $G^\vartheta$.
This association between $\vartheta$-torsion elements of $G$ and Kac diagrams descends to a 
 map $\chi_G \, \colon \, G^{\vartheta-\tor}_{\sim} \rightarrow \kappa_\sim$. It is known (see, for example, ~\cite[Chapter X, Theorems~5.15 and~5.16]{helgason:differential},~\cite[Chapter 8]{kac:infinite},~\cite[$7^\circ$ of Section~4 of Chapter~4]{onishchik-vinberg:lie}, or~\cite[Theorem~3.7]{reeder:torsion})   that this map is bijective when $G$ is adjoint.

Under the assumption that $\gg$ is simple, in Section~\ref{sec:kacdiagrams} we give a complete description of the Kac diagrams associated to $\vartheta$-elliptic elements in $\absW$. 

\subsection*{The composition \texorpdfstring{$\zeta_G = \chi_G \circ \psi_G$ and applications}{applications}}
The composition $\zeta_G = \chi_G \circ \psi_G \, \colon \,\absW^{\ee}_{\sim} \rightarrow \kappa_{\sim}$ has been extensively studied.  For example, it is known to be independent of the  isogeny type of $G$ (see, for example,~\cite[comments following Proposition~6.6]{adams-he-nie:from} or~\cite[Remark~2.2.1, Lemma~3.3.6, and the fact that $x_n = x_{T_n}$ encodes the Kac diagram associated to $n$]{debacker:totally}), so we will write $\zeta$ rather than $\zeta_G$. It is also known that every equivalence class in the image of $\zeta$ is a singleton (see~\cite[Proposition~6.8]{adams-he-nie:from} or~\cite[Lemma~3.5.1]{debacker:totally}). The map $\zeta$ is known to be injective for groups of type $A_n$ (there is only one elliptic conjugacy class in $\absW$) as well as groups of types
$\lsup{2}A_2$,
$D_m$ for $4 \leq m \leq 8$,
$\lsup{3}D_4$,
$E_n$,
$\lsup{2}E_6$,
 $F_4$, and
 $G_2$
 (see the tabulations in~\cite{adams-he-nie:from}, 
 \cite{Bouwknegt:Lie}, 
 \cite{Levy:KW},
 \cite{reederetal:gradings}, and \cite{reederetal:epipelagic}). 
 Moreover, if  we denote by $\res_{\text{ell-reg}} \zeta$ the restriction of $\zeta$ to the set of regular (in the sense of Springer~\cite{springer:regular})  $\vartheta$-elliptic $\vartheta$-conjugacy classes in $\absW$, then $\res_{\text{ell-reg}} \zeta$ is known to be injective~\cite[discussion in Section~2.1]{reeder:thomae}.

Since $\zeta$ is independent of the isogeny type of $G$ and  $\chi_G$ is a bijective map when $G$ is adjoint,  Theorem~\ref{thm:psiinjects} implies
\vspace{1em}
\begin{corn}[\ref{cor:injective}]
The map $\zeta  \, \colon \, \absW^{\ee}_{\sim} \rightarrow \kappa_{\sim}$ is injective.
\end{corn}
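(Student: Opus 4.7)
The plan is to deduce Corollary~\ref{cor:injective} immediately from Theorem~\ref{thm:psiinjects} together with two facts recorded just above the corollary statement: first, that the composition $\zeta = \chi_G \circ \psi_G$ is independent of the isogeny type of $G$; and second, that $\chi_G$ is a bijection whenever $G$ is adjoint.

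Accordingly, I would begin by letting $G_{\mathrm{ad}}$ denote the adjoint connected semisimple complex group with Lie algebra $\gg$, equipped with the pinning and pinned automorphism inherited from $G$ via the natural isogeny. By the isogeny-independence of $\zeta$, it suffices to verify injectivity for this adjoint model. Theorem~\ref{thm:psiinjects}, applied to $G_{\mathrm{ad}}$, yields that $\psi_{G_{\mathrm{ad}}}$ is injective, and the classical bijectivity of $\chi_{G_{\mathrm{ad}}}$ recalled in the paragraph preceding the corollary yields in particular that $\chi_{G_{\mathrm{ad}}}$ is injective. Their composition $\chi_{G_{\mathrm{ad}}} \circ \psi_{G_{\mathrm{ad}}} = \zeta$ is then a composition of injections, hence injective.

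The substantive content of the argument is entirely absorbed into Theorem~\ref{thm:psiinjects}, whose proof handles the delicate point that distinct $\vartheta$-elliptic $\vartheta$-conjugacy classes in $\absW$ might a priori have lifts to $N_G(A)$ that become $\vartheta$-conjugate only after moving to the ambient group $G$. At the level of the corollary itself, the only thing left to verify is that both the source $\absW^{\ee}_{\sim}$ and the target $\kappa_\sim$ of $\zeta$ depend only on the pair $(\gg,\hh)$ together with the pinned automorphism $\vartheta$, so that the passage from $G$ to $G_{\mathrm{ad}}$ leaves both ends of $\zeta$ unchanged. This is immediate from the definitions (the Weyl group $\absW$ and the notion of $\vartheta$-ellipticity depend only on the root system with its $\vartheta$-action, while $\kappa_\sim$ is defined in terms of the affine Dynkin diagram of $G^\vartheta$), so no further obstacle is expected beyond Theorem~\ref{thm:psiinjects} itself.
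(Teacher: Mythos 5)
Your proposal is correct and is essentially identical to the paper's own (very brief) argument: pass to the adjoint form via the isogeny-independence of $\zeta$, invoke Theorem~\ref{thm:psiinjects} for injectivity of $\psi_{G_{\mathrm{ad}}}$, and compose with the bijection $\chi_{G_{\mathrm{ad}}}$. Nothing further is needed.
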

\vspace{1em}
\noindent

In Section~\ref{sec:application} we show that Corollary~\ref{cor:injective} gives a new proof of the fact that every Weyl group is rational.   In the context of totally ramified tame tori in a group over a field with non-trivial discrete valuation and  cohomological dimension less than one, the Corollary implies that the point in the reduced building associated to  such a torus completely determines the torus up to conjugation.  We believe that this latter fact will play a role in harmonic analysis on reductive $p$-adic groups.   Corollary~\ref{cor:injective} is also implicitly used in the proof of~\cite[Proposition~7.1]{adams-he-nie:from}.

\begin{remark*}
If we replace $\C$ with any algebraically closed field, then the results of this paper still hold with very mild restrictions on $p$, the characteristic of the field.  The theory of Kac diagrams goes through as long as one assumes that $p$ does not divide the order of the $\vartheta$-torsion element under consideration~\cite[Section 2]{Levy:KW}.  This restriction on $p$ ensures that the roots of unity, denoted by  $\xi$ in this paper, that arise in the constructions exist.  
\end{remark*}

\begin{remark*} Since applications will often require that $G$ be reductive rather than semisimple, we note that  if $G$ is reductive, then one defines $\zeta$ to be $\zeta_{G'}$ where $G'$ is the derived group of $G$.
\end{remark*}

Finally, this paper involves a fair bit of notational gymnastics.  Although we independently derived our findings multiple times  and  compared our findings to results  in the literature, there are bound to be typos and more serious errors. So, be cautious.

\section*{Acknowledgements}  We thank Ram Ekstrom, Kaiwen Lu, Loren Spice, and Cheng-Chiang Tsai  for enlightening discussions.  We thank Mark Reeder for discussions about some of the details in~\cite{reeder:thomae}.

\section{Notation and two well-known results}

\subsection{Notation}  In addition to the notation introduced in the Introduction, we will need the following.

Let $K$ denote the square matrix with ones on the anti-diagonal and zeroes elsewhere.   The size of $K$ will be evident from context.  For example, if we are working with $G= \SO_{2\ell + 1}$, then $K$ is a $(2\ell + 1) \times (2 \ell + 1)$ matrix.

If $X$ is an $n \times m$ matrix with entries in $\C$, then $X^T$ denotes the transpose of $X$.

Suppose $\ell \in \Z_{\geq 1}$ and $\vec{\ell} = (\ell_\mu \geq \ell_{\mu-1} \geq \cdots \geq \ell_1)$ is a partition of $\ell$ into $\mu$ parts.   For $1 \leq \nu \leq \mu$ define $\ell'_\nu$ := $\ell_1 + \ell_2 + \cdots + \ell_{\nu - 1}$.  Note that $\ell'_1 = 0$ and $\ell'_\mu + \ell_\mu = \ell$.

For the vector space $\R^n$ we let $e_i$ denote  the $i^\text{th}$ standard basis vector for $1 \leq i \leq n$.  So $e_1 = (1,0,0, \ldots ,0)$, $e_2 =  (0,1,0,0, \ldots ,0)$, etc.  

Suppose $X$ is set.  If $\tau$ is a function from $X$ to $X$,  then $X^\tau$ denotes the set of $x\in X$ such that $\tau(x) = x$.   If $X$ also has the structure of a connected complex group and 
$\tau$ is an automorphism of $X$, then $X^\tau$ denotes the \textbf{connected component} of the $\langle \tau \rangle$-fixed points of $X$.
If $y \in X$ and $H$ is a group that acts on $X$, then $\lsup{H}y := \{ h \cdot y \, | \, h \in H \}$.  

\subsection{Two known results}
We begin by  proving an elementary result from linear algebra.  

\begin{lemma} \label{lem:veryelement} Suppose $F$ is a field and  $U$ is a finite dimensional $F$-vector space.   Let $(\vec{u}_1,\vec{u}_2, \ldots , \vec{u}_m)$ be a basis for $U$.   Suppose $j_1, j_2, \ldots , j_m \in \Z$ and $T \, \colon \, U \rightarrow U$ is a linear map such that $T(\vec{u}_k) = (-1)^{j_k} \vec{u}_{k+1}$ for $1 \leq k < m$ and $T(\vec{u}_m) = (-1)^{j_m} \vec{u}_1$.   If $T^m = \pm \Id_U$, then the characteristic polynomial of $T$ is $t^m \mp 1$.
\end{lemma}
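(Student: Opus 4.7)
The plan is to use a diagonal change of basis to reduce $T$ to a standard companion matrix, from which the characteristic polynomial can be read off directly.

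First, I would compute $T^m$ on the original basis. Iterating the defining relations gives
\[
T^m(\vec{u}_1) = (-1)^{j_1 + j_2 + \cdots + j_m}\,\vec{u}_1,
\]
and similarly on every other $\vec{u}_k$, so $T^m = (-1)^{j_1+\cdots+j_m}\Id_U$. The hypothesis $T^m = \pm \Id_U$ therefore forces $(-1)^{j_1 + \cdots + j_m} = \epsilon$ for some $\epsilon \in \{+1,-1\}$, and we will have $t^m \mp 1 = t^m - \epsilon$ once we are done.

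Next, I would define a new basis $\vec{v}_k := c_k \vec{u}_k$ with $c_1 = 1$ and $c_{k+1} := c_k\,(-1)^{j_k}$ for $1 \leq k < m$, so that $c_{k+1} = (-1)^{j_1 + \cdots + j_k}$. A direct check shows $T(\vec{v}_k) = \vec{v}_{k+1}$ for $1 \leq k < m$, while
\[
T(\vec{v}_m) = c_m (-1)^{j_m} \vec{v}_1 = (-1)^{j_1 + \cdots + j_m}\,\vec{v}_1 = \epsilon\,\vec{v}_1.
\]
Thus the matrix of $T$ in the basis $(\vec{v}_1,\ldots,\vec{v}_m)$ is the companion matrix of the polynomial $t^m - \epsilon$.

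The remaining step is simply to invoke the well-known fact that the characteristic polynomial of the companion matrix of a monic polynomial $p(t)$ is $p(t)$ itself; applied here this yields $\det(tI - T) = t^m - \epsilon = t^m \mp 1$, as claimed. There is no real obstacle here: the only thing that could go wrong is the matching of signs at the wrap-around step $k = m$, but that is precisely where the hypothesis $T^m = \pm \Id_U$ is consumed to pin down $\epsilon$.
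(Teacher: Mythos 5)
Your proof is correct and follows essentially the same route as the paper: both arguments reduce to the observation that the characteristic polynomial of $T$ is $t^m - (-1)^{j_1+\cdots+j_m}$ and that the hypothesis $T^m = \pm\Id_U$ pins down the parity of $\sum_k j_k$. Your diagonal rescaling to a companion matrix simply makes explicit the determinant computation the paper asserts in one line.
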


\begin{proof}  If $T^m = \Id_U$, then we conclude that $\sum_{k=1}^m j_k$ is even.  If $T^m = -\Id_U$, then we conclude that $\sum_{k=1}^m j_k$ is odd.   Since the characteristic polynomial of $T$ is 
$$t^m - (-1)^{\sum_{k=1}^m j_k},$$
  the result follows.  
\end{proof}

The next result is well known to experts, but we could not find a reference that did not place conditions on the isogeny class of $G$.

\begin{lemma}  \label{lem:sigmaconj}
  If $g \in G^{\vartheta-\tor}$, then $g$ is $\vartheta$-conjugate in $G$ to an element of $A^{\vartheta}$.   
\end{lemma}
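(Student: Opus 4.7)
The plan is to carry this out in two stages: first $\vartheta$-conjugate $g$ into the fixed maximal torus $A$, and then $\vartheta$-conjugate within $A$ to land in the connected component $A^{\vartheta}$.

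For the first stage, I view the element $g \rtimes \vartheta$ as the automorphism $\operatorname{Int}(g) \circ \vartheta \colon x \mapsto g\,\vartheta(x)\,g^{-1}$ of $G$. Since $g \rtimes \vartheta$ has finite order in $G \rtimes \langle \vartheta \rangle$, this automorphism has finite order and is therefore semisimple. A Steinberg-style fixed-point theorem for finite-order automorphisms of connected reductive complex groups then guarantees that $\operatorname{Int}(g) \circ \vartheta$ stabilizes some pair $(B', A')$ consisting of a Borel subgroup $B'$ of $G$ and a maximal torus $A' \subseteq B'$. Since $G$ acts transitively by conjugation on such pairs, pick $h \in G$ with $h B' h^{-1} = B$ and $h A' h^{-1} = A$. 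The $\vartheta$-conjugate $g_1 := h\,g\,\vartheta(h)^{-1}$ then satisfies $\operatorname{Int}(g_1) \circ \vartheta = \operatorname{Int}(h)\circ(\operatorname{Int}(g)\circ\vartheta)\circ\operatorname{Int}(h)^{-1}$, which stabilizes $(B, A)$. Because $\vartheta$ already stabilizes $(B, A)$, the inner automorphism $\operatorname{Int}(g_1)$ does too, so $g_1 \in N_G(B) \cap N_G(A) = B \cap N_G(A) = A$.

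For the second stage, the finite-order action of $\vartheta$ on $\mathfrak{h} = \operatorname{Lie}(A)$ is semisimple, giving a direct sum decomposition $\mathfrak{h} = \mathfrak{h}^{\vartheta} \oplus \mathfrak{h}_0$, where $\mathfrak{h}^{\vartheta}$ is the $1$-eigenspace and $\mathfrak{h}_0 = (\vartheta - 1)\mathfrak{h}$ is the sum of the remaining eigenspaces. Exponentiating yields $A = A^{\vartheta} \cdot A_0$, with $A_0 := \exp(\mathfrak{h}_0)$ a connected subtorus. The map $t \mapsto t\,\vartheta(t)^{-1}$ sends $A$ surjectively onto $A_0$, because it corresponds via the exponential to $-(\vartheta - 1)$, which maps $\mathfrak{h}$ onto $\mathfrak{h}_0$. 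Decomposing $g_1 = a_1 \cdot a_0$ with $a_1 \in A^{\vartheta}$ and $a_0 \in A_0$, and choosing $t \in A$ with $t\,\vartheta(t)^{-1} = a_0^{-1}$, I compute using the commutativity of $A$ that
\[
t\,g_1\,\vartheta(t)^{-1} \;=\; \bigl(t\,\vartheta(t)^{-1}\bigr)\,g_1 \;=\; a_0^{-1}\cdot a_1\cdot a_0 \;=\; a_1 \;\in\; A^{\vartheta}.
\]

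The main obstacle is the first stage, since, as the authors remark, convenient references for the Steinberg fixed-point theorem tend to include hypotheses on the isogeny type of $G$ (usually simply connected or adjoint). One either needs to cite an isogeny-free version of this result or to reprove it in the needed generality, perhaps by descent from a simply connected cover. Once $g$ has been placed inside $A$, the passage from $A$ to $A^{\vartheta}$ is a direct calculation internal to the abelian torus.
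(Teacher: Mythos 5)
Your proposal is correct and follows essentially the same route as the paper: quasi-semisimplicity of the finite-order automorphism $\operatorname{Int}(g)\circ\vartheta$ (Steinberg) to land $g$ in $A$ after $\vartheta$-conjugation, followed by the decomposition $A = A^{\vartheta}\cdot(1-\vartheta)A$ to push into $A^{\vartheta}$. The only difference is that the paper cites Digne--Michel (Lemma~7.2) for this last decomposition, whereas you reprove it directly via the eigenspace decomposition of $\mathfrak{h}$ and the surjectivity of $\exp$, which is a perfectly valid (and isogeny-independent) substitute.
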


\begin{proof}
Recall that an automorphism of $G$ is said to be quasi-semisimple if it preserves a Borel-torus pair in $G$.

    Since $\vartheta$ is quasi-semisimple and $g \rtimes \vartheta$ has finite order, it follows that $g \rtimes \vartheta$ is quasi-semisimple~\cite[\S7 and \S9]{steinberg:endomorphism}.  Hence, $g \rtimes \vartheta$ preserves a Borel-torus pair $(A',B')$ in $G$.  
    Since all Borel-torus pairs in $G$ are conjugate, there exists $h \in G$ such that $h(A,B) = (A',B')$.  We then have
    $$h(A,B) = (A',B') = g \cdot \vartheta (A',B') = g \vartheta(h) \cdot \vartheta(A,B) =  g \vartheta(h)  (A,B),$$
    which implies that $h\inv g \vartheta(h) \in A$.  Thanks to~\cite[Lemma~7.2]{digne-michel:complements}, we know $A = A^\vartheta \cdot (1 - \vartheta)A$. The result follows.
\end{proof}

\section{An application of the injectivity of \texorpdfstring{$\zeta = \chi_G \circ \psi_G$}{zeta is comp}}   \label{sec:application}

A finite group $H$ is said to be \emph{rational} provided that for any $h \in H$ we have that $h^j$ is $H$-conjugate to $h$ whenever $j$ is relatively prime to the order of $h$.
There are many equivalent definitions of rationality, and the nomenclature is explained by the fact~\cite[Corollary 2 of Section 13.1]{serre:linear} that $H$ is rational if and only if every irreducible character of $H$ takes values in $\Q$.

It is known that every Weyl group is a rational group (see, for example,~\cite[Section 3 of Chapter 2 and Chapter 5]{kletzing:structure} or~\cite[Theorem~8.5]{springer:regular}).  In this section, we give another proof of this fact.

\subsection{Weyl groups are rational}
 
As noted in the Introduction, 
the fact that  $\zeta$ is independent of the isogeny type of $G$ together with Theorem~\ref{thm:psiinjects} implies
\begin{cor}  \label{cor:injective}
The map $\zeta  \, \colon \, \absW^{\ee}_{\sim} \rightarrow \kappa_{\sim}$ is injective.
\end{cor}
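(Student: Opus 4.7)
The plan is to derive Corollary~\ref{cor:injective} as a direct consequence of Theorem~\ref{thm:psiinjects} combined with two facts already recorded in the Introduction, so essentially no new work is needed beyond assembling these ingredients carefully.

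First, I would recall the factorization $\zeta_G = \chi_G \circ \psi_G$ and exploit the freedom in the isogeny type of $G$. Because $\zeta$ is independent of the isogeny class of $G$ (as noted in the Introduction via \cite{adams-he-nie:from} and \cite{debacker:totally}), I may replace $G$ by the adjoint group $G_{\mathrm{ad}}$ having the same root datum-compatible data $(\gg,\hh,\vartheta)$. The point of this replacement is that in the adjoint case $\chi_{G_{\mathrm{ad}}}$ is known to be a bijection (Helgason, Kac, Onishchik--Vinberg, or \cite[Theorem~3.7]{reeder:torsion}), so in particular it is injective.

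Second, I would invoke Theorem~\ref{thm:psiinjects} applied to $G_{\mathrm{ad}}$: the map
\[
\psi_{G_{\mathrm{ad}}} \colon \absW^{\ee}_{\sim} \longrightarrow G_{\mathrm{ad}}^{\vartheta\text{-}\mathrm{tor}}/\!\sim
\]
is injective. Since the composition of two injective maps is injective, the composition
\[
\zeta = \zeta_{G_{\mathrm{ad}}} = \chi_{G_{\mathrm{ad}}} \circ \psi_{G_{\mathrm{ad}}} \colon \absW^{\ee}_{\sim} \longrightarrow \kappa_{\sim}
\]
is injective, and by isogeny-invariance this is the same map as the $\zeta$ of the original $G$.

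There is essentially no obstacle here: all the substantive content has been absorbed into Theorem~\ref{thm:psiinjects} and the classical bijectivity of $\chi_{G_{\mathrm{ad}}}$. The only item one must be mindful of is that the Weyl group $\absW$, the notion of $\vartheta$-ellipticity, and the set $\kappa_{\sim}$ depend only on the root datum modulo isogeny, so passing to $G_{\mathrm{ad}}$ changes neither the source nor the target of $\zeta$. Once this is noted, the corollary is immediate.
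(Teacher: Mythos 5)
Your proposal is correct and is essentially identical to the paper's argument: both pass to the adjoint form using the isogeny-independence of $\zeta$, then compose the bijectivity of $\chi_{G_{\mathrm{ad}}}$ with the injectivity of $\psi_{G_{\mathrm{ad}}}$ from Theorem~\ref{thm:psiinjects}. No gaps.
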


Using this, we show:

\begin{lemma} \label{lem:Weylgroupsarerational}
Every Weyl group is rational.
\end{lemma}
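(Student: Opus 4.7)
The plan is to use Corollary~\ref{cor:injective} to reduce rationality to an equality of Kac diagrams. Let $w \in \absW$ and let $j$ be coprime to $|w|$. First I would reduce to the $\vartheta$-elliptic case: every element of $\absW$ is $\absW$-conjugate to some $\vartheta$-elliptic element of a $\vartheta$-stable parabolic $\absW_\theta$ (cuspidal-support theory for finite Coxeter groups), and ellipticity in $\absW_\theta$ is preserved under taking coprime powers (if $(w')^j$ lay in a proper $\vartheta$-stable parabolic of $\absW_\theta$, so would $w' = ((w')^j)^{j^{-1}}$, with $j^{-1}$ taken modulo $|w'|$). An induction on rank then reduces the lemma to proving $w \sim_{\absW,\vartheta} w^j$ for each $\vartheta$-elliptic $w \in \absW$.

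With $w$ $\vartheta$-elliptic, both $[w]$ and $[w^j]$ lie in $\absW^{\ee}_{\sim}$, so by Corollary~\ref{cor:injective} it suffices to show $\zeta([w]) = \zeta([w^j])$ in $\kappa_\sim$. Let $n \in N_G(A)$ be a lift of $w$, so that $n^j$ lifts $w^j$. Using the Chinese Remainder Theorem, one may adjust $j$ within its coset modulo $|w|$ so that additionally $\gcd(j,M)=1$, where $M$ is the order of $n \rtimes \vartheta$. By Lemma~\ref{lem:sigmaconj}, $n$ is $\vartheta$-conjugate in $G$ to $\lambda(\xi) \in A^\vartheta$ for some $\lambda \in \X_*(A^\vartheta)$ and the fixed primitive $M$-th root of unity $\xi$; hence $n^j$ is $\vartheta$-conjugate to $\lambda(\xi)^j = (j\lambda)(\xi)$. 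Thus $\zeta([w])$ and $\zeta([w^j])$ are the Kac diagrams read off from the $\absW^{\aff}_\vartheta$-orbits of $\lambda/M$ and $j\lambda/M$ in $\bar{C}_\vartheta$, modulo $\operatorname{Stab}_{\absW^{\aff}_\vartheta}(C_\vartheta)$.

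The heart of the proof is to show that these two orbits coincide. Since $\zeta$ is isogeny-independent, one may work in adjoint $G$; in that setting, the conjugacy $n \sim_{G,\vartheta} \lambda(\xi)$ together with the fact that $n \in N_G(A^\vartheta)$ acts on $A^\vartheta$ through $w$ forces $(w\lambda)(\xi) = \lambda(\xi)$, i.e., $(w-1)\lambda \in M\cdot \X_*(A^\vartheta)$, so that $\bar\lambda := \lambda \bmod M$ is $w$-fixed in $A^\vartheta[M]$. One then appeals to the explicit description of $\vartheta$-elliptic Kac diagrams given in Section~\ref{sec:kacdiagrams}: for each simple type, the constraints of ellipticity pin down $\bar\lambda$ sharply enough that every ``Galois conjugate'' $j\bar\lambda$ (for $\gcd(j,M)=1$) already lies in the $\absW^\vartheta$-orbit of $\bar\lambda$, which yields $\zeta([w]) = \zeta([w^j])$.

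The main obstacle is this last, case-by-case verification; ellipticity of $w$ is essential. Indeed, for an arbitrary torsion element the analogous equality of Kac diagrams can fail (for instance, $g = \mathrm{diag}(1,\omega,1) \in \mathrm{PGL}_3$ is not $G$-conjugate to $g^2$, and these two elements have distinct Kac diagrams in $\kappa_\sim$), so the extra constraints imposed on Kac diagrams of $\vartheta$-elliptic elements in Section~\ref{sec:kacdiagrams} are really doing the work.
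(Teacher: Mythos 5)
Your skeleton matches the paper's: reduce by induction through parabolic subgroups to the case of an elliptic class, then show that $[w]$ and $[w^j]$ have the same Kac diagram and invoke Corollary~\ref{cor:injective}. The gap is that the middle step --- the one you yourself call ``the heart of the proof'' --- is never actually carried out. You assert that for each simple type the constraints of ellipticity pin down $\bar{\lambda}$ sharply enough that every Galois conjugate $j\bar{\lambda}$ with $\gcd(j,M)=1$ already lies in the relevant orbit of $\bar{\lambda}$, but this assertion is precisely the statement to be proved (given injectivity of $\zeta$ it is essentially equivalent to the lemma), and it is not a routine verification: Section~\ref{sec:kacdiagrams} computes the Kac diagram of the chosen representative $n_{\vec{\ell}}$ itself, not of its coprime powers, so one would still have to redo the entire computation for $n_{\vec{\ell}}^{\,j}$ in every type and match the answers. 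Your own $\mathrm{PGL}_3$ example shows the claim fails without ellipticity, so something substantive must be supplied. (A secondary issue: the step asserting that $\vartheta$-conjugacy of $n$ to $\lambda(\xi)$ forces $(w-1)\lambda \in M\cdot \X_*(A^\vartheta)$ does not follow as stated, since the conjugating element $h$ need not normalize $A$.)

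The paper closes this gap by a uniform argument with no case analysis. It realizes $W$ as the Weyl group of a split group $\bH$ over $\Q_p$ for a prime $p$ not dividing the order of $w$, and uses the bijection $\varphi_\sigma$ between tame conjugacy classes in $W$ and classes of $\Q_p^{\unram}$-minisotropic maximal tori. The commutative diagram of~\cite[Lemma~5.1.2]{debacker:totally} identifies the class of $w^p$ with the Frobenius transform of the torus attached to $w$; since $\bH$ is $\Q_p$-split, $\Fr$ fixes the chosen alcove pointwise, so both classes single out the same point of the building, hence the same Kac diagram, and injectivity of $\zeta$ finishes the argument. If you want to complete your version instead, you must either carry out the power computation type by type or find a conceptual reason for the Galois-stability of elliptic Kac diagrams --- which is exactly what the Frobenius argument supplies.
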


 \begin{proof}
 Suppose $W$ is a Weyl group.  We will prove this by induction.   If $W$ is trivial, then there is nothing to prove.  Suppose then that $W$ is not trivial and the statement is true for every parabolic subgroup of $W$.

 Suppose $w \in W$.   If the $W$-orbit of $w$ intersects a proper parabolic subgroup $W'$ of $W$, then without loss of generality we may assume that $w \in W'$.  Since the order of $w$ in $W'$ is equal to the order of $w$ in $W$, by induction we have that if $j$ is relatively prime to the order of $w$ in $W$, then $w^j$ is  $W'$-conjugate, hence $W$-conjugate, to $w$.

 Suppose now that the $W$-orbit of $w$ does not intersect a proper parabolic subgroup of $W$. That is, suppose $w$ is elliptic in $W$.   It will be enough to show that if  $p$ is a prime that does not divide the order  of $w$, then $w^p$ and $w$ are $W$-conjugate. Fix such a prime $p$.

Let $\bH$ be a connected reductive group defined over $\Z$ with maximal $\Z$-torus $\bA$ such that  $N_{\bH}(\bA)/\bA$  is $W$.  Then $\bH$ is a $\Q_p$-split group where $\Q_p$ denotes the $p$-adic completion of $\Q$.  Let $\bar{\Q}_p$ be an algebraic closure of $\Q_p$.  Let $\Q_p^{\unram}$ denote the maximal unramified extension of $\Q_p$ in $\bar{\Q}_p$ and let $\Q_p^{\tame}$ denote the maximal tame extension of $\Q_p$ in $\bar{\Q}_p$.  We have $\Q_p^{\unram} \leq \Q_p^{\tame}$.   
Fix a topological generator $\Fr$ for $\Gal(\Q_p^{\unram}/\Q_p)$ and a topological generator $\sigma$ for $\Gal(\Q_p^{\tame}/\Q_p^{\unram})$ such that if  $\Fr$ and $\sigma$ also denote lifts of these elements into $\Gal({\Q}_p^{\tame}/ \Q_p)$, then $\Fr\inv \sigma \Fr = \sigma^p$.

Let $\Wtame$ denote the set of tame elements in $W$, that is, those elements $w' \in W$ such that $p$ does not divide the order of $w'$.    If $w' \in \Wtame$, then there exists $h \in \bH(\Q_p^{\tame})$ such that the image of $h\inv \sigma(h)$ in $W$ is $w'$ and the map $w' \mapsto \lsup{\bH(\Q_p^{\unram})} (\lsup{h} \bA)$ defines a bijective map $\varphi_\sigma$ from $\Wtamesimnon$, the conjugacy classes of tame elements in $W$, to $\Ctame$, the set of $\bH(\Q_p^{\unram})$-conjugacy classes of $\Q_p^{\unram}$-minisotropic maximal $\Q_p^{\unram}$-tori in $\bH$~\cite[Lemma~3.4.1]{debacker:totally}.  A $\Q_p^{\unram}$-torus $\bT$ in $\bH$ is said to be $\Q_p^{\unram}$-minisotropic provided that $\X^*(\bT/Z(\bH))^{\Gal(\bar{\Q}_p/\Q_p^{\unram})}$ is trivial.

Since $\bH$ is $\Q_p$-split, we have that $\Fr$ and $\sigma$ both  act trivially on  $W$.   Thus, the commutative diagram of bijective maps in~\cite[Lemma~5.1.2]{debacker:totally} becomes
 \begin{center}
\begin{tikzcd}
\Wtamesimnon \arrow[r, "( \text{-} )^p"] \arrow[d, "\varphi_\sigma"]
& \Wtamesimnon \arrow[r, "\Id"] \arrow[d, "\varphi_{\sigma^p}"]
&  \Wtamesimnon \arrow[d,  "\varphi_{\sigma}" ] \\
\Ctame \arrow[r,  "\Id" ]
& \Ctame  \arrow[r, "\Fr"]
& \Ctame
\end{tikzcd}
\end{center}
Since $p$ does not divide the order of $w \in \Wtame$, the diagram implies $\Fr(\varphi_\sigma(\lsup{W}w)) = \varphi_\sigma(\lsup{W}(w^p))$.

 Fix a $\Fr$-stable alcove $D$ in the reduced Bruhat-Tits building of $\bH(\Q_p^{\unram})$.  From~\cite[Section~2.2]{debacker:totally} we know that $\varphi_\sigma(\lsup{W}w)$ and $\Fr(\varphi_\sigma(\lsup{W}w))$ identify  unique points, call them $x$ and $x'$, in the closure of $D$.  Note that $x' = \Fr(x)$.  From~\cite[Section~3.5]{debacker:totally} the barycentric coordinates of $x$ are  determined by $\zeta(\lsup{W}w)$ and those of $x'$ by $\zeta(\lsup{W}(w^p))$.   Since $\bH$ is $\Q_p$-split, we have that $\Fr$ fixes $D$ pointwise.  Thus,  $x' = \Fr(x) = x$.  Thanks to Corollary~\ref{cor:injective}, we know that the map $\zeta$ is injective. Consequently, $w^p$ is $W$-conjugate to $w$.
 \end{proof}

\section{The map \texorpdfstring{$\psi_G$}{psiG} is injective}   \label{sec:psiinjective}

The proof of Theorem~\ref{thm:psiinjects} has two steps. In Section~\ref{sec:reduction}  we show that we may reduce to the case when $G$ is isogenous to a simple complex adjoint  group.  In Sections~\ref{sec:injAlminusone} to~\ref{sec:injexceptional} we show that for a given simple complex adjoint group $G'$, the map $\psi_G$ is injective for  some $G$ that is isogenous to $G'$.   The case-by-case work we carry out was made much easier thanks to the clearly written material in 
~\cite[Sections~3 and~4]{gecketal:minimal}, ~\cite[Section~3.4]{geck:pfeiffer:characters}, and~\cite[Section~2]{geck-pfeiffer:irreducible}.

\begin{remark}
Note that the order  of a lift of a $\vartheta$-elliptic Weyl group element  is independent of the lift. Thus, if $w_1$ and $w_2$ are $\vartheta$-elliptic elements of $\absW$ that lift to $n_1$ and $n_2$ with $n_1 \rtimes \vartheta$ and $n_2 \rtimes \vartheta$  having different orders, then we have $w_1$ is not $\vartheta$-conjugate to $w_2$ in $\absW$.   On the other hand, it sometimes happens  that  $n_1 \rtimes \vartheta $ and $n_2  \rtimes \vartheta $ have the same order, yet  $w_1$ is not $\vartheta$-conjugate to $w_2$ in $\absW$.  So looking at orders to establish injectivity will  not work in general.
\end{remark}

\subsection{Reduction to the almost-simple case}  \label{sec:reduction}

We begin by showing that the injectivity of $\psi_G$ is independent of the isogeny type of $G$.

\begin{lemma} \label{lem:isogenyindependent} Suppose $H$ and $I$ are semisimple complex groups that are isogenous to $G$.   We have that the map
$$\psi_H \, \colon \, \absW^{\ee}_{\sim} \rightarrow H^{\vartheta-\tor}_{\sim}$$
is injective if and only if the map
$$\psi_I \, \colon \, \absW^{\ee}_{\sim} \rightarrow I^{\vartheta-\tor}_{\sim}$$
is injective.
\end{lemma}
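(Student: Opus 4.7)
The plan is to reduce the biconditional to a single central isogeny and then transport $\vartheta$-conjugacies through it. Since $H$ and $I$ are both isogenous to $G$, they share a common simply connected cover $\tilde G$ with central isogenies to $H$ and $I$, so it suffices to prove: if $\pi \colon H \to I$ is a surjective central isogeny of semisimple complex groups (with compatible pinnings and the pinned automorphism $\vartheta$), then $\psi_H$ is injective if and only if $\psi_I$ is injective. The kernel $Z = \ker(\pi)$ is a finite subgroup of $Z(H) \subseteq A_H$; the map $\pi$ is $\vartheta$-equivariant, identifies the two Weyl groups, and, because $\pi^{-1}(A_I) = A_H$, restricts to a surjection $N_H(A_H) \twoheadrightarrow N_I(A_I)$.

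For the direction that $\psi_I$ injective implies $\psi_H$ injective, I would take two $\vartheta$-elliptic elements $w_1, w_2 \in \absW$ with lifts $n_1, n_2 \in N_H(A_H)$ that are $\vartheta$-conjugate in $H$, apply $\pi$ to obtain lifts $\pi(n_1), \pi(n_2)$ of $w_1, w_2$ that are $\vartheta$-conjugate in $I$, and invoke injectivity of $\psi_I$ to conclude that $w_1$ is $\vartheta$-conjugate to $w_2$ in $\absW$. This direction is essentially formal.

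For the converse, I would begin with lifts $m_1, m_2 \in N_I(A_I)$ of $\vartheta$-elliptic $w_1, w_2$ that are $\vartheta$-conjugate via some $i \in I$, pick arbitrary lifts $n_j \in N_H(A_H)$ of $m_j$ and $h \in H$ of $i$, and then observe that $h n_2 \vartheta(h)^{-1}$ and $n_1$ have equal images under $\pi$, so they differ by a unique element $z \in Z$. The main obstacle, and the only step beyond formal manipulation, is to absorb this lifting ambiguity: since $Z \subseteq A_H$, the element $z n_1$ is still a lift of $w_1$, so $zn_1$ and $n_2$ are $\vartheta$-conjugate lifts of $w_1, w_2$ in $H$, and injectivity of $\psi_H$ forces $w_1$ to be $\vartheta$-conjugate to $w_2$. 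No case-by-case analysis or structural input about the root system is required.
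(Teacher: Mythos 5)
Your proposal is correct and follows essentially the same route as the paper: reduce to a single central isogeny, push $\vartheta$-conjugacies down formally, and lift them up by absorbing the central-kernel ambiguity via the fact that all lifts of a $\vartheta$-elliptic element into the normalizer are $\vartheta$-conjugate by an element of the maximal torus. The only (welcome) refinement is that you make explicit the passage through the common simply connected cover, where the paper simply posits an isogeny $\rho\colon H\to I$.
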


\begin{proof} Suppose $\rho \, \colon \, H \rightarrow I$ is an isogeny.  Let $A_H$ (resp. $A_I$) be a maximal torus in $H$ (resp $I$) with $\rho[A_H] = A_I$. 
Fix $w_1, w_2 \in \absW^{\ee}$.  Choose $n_k \in N_H(A_H)$ and $\dot{n}_k \in N_I(A_I)$ lifting $w_k$.

Suppose first that $\psi_H$ is injective.  If there exists $i \in I$ such that $i \dot{n}_1 \vartheta(i)\inv = \dot{n}_2$, then there exists $h \in H$ such that $h {n}_1 \vartheta(h)\inv \in {n}_2 A_H$.  Since all lifts of $w_2 \in \absW^{\ee}$ into $N_H(A_H)$ are $\vartheta$-conjugate by an element of $A_H$, we may assume that $h {n}_1 \vartheta(h)\inv = {n}_2$.  Since $\psi_H$ is injective, we conclude that $w_1$ is $\vartheta$-conjugate to $w_2$ in $\absW$.  That is, $\psi_I$ is injective.

Suppose now that that $\psi_I$ is injective.  If there exists $h \in H$ such that $h {n}_1 \vartheta(h)\inv = {n}_2$, then by taking $i = \rho(h)$ and $n'_k = \rho(n_k)$, we have  $i {n}'_1 \vartheta(i)\inv = {n}'_2$.  Since all lifts of $w_k \in \absW^{\ee}$ into $N_I(A_I)$ are $\vartheta$-conjugate by an element of $A_I$, we may assume that $i \dot{n}_1 \vartheta(i)\inv = \dot{n}_2$.  Since $\psi_I$ is injective, we conclude that $w_1$ is $\vartheta$-conjugate to $w_2$ in $\absW$.  That is, $\psi_H$ is injective.
\end{proof}

\begin{cor} \label{cor:reductionsimple} The map $\psi_G$ is injective if and only if $\psi_H$ is injective for every almost-simple factor $H$ of $G$. 
\end{cor}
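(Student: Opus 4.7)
The plan is to combine Lemma~\ref{lem:isogenyindependent} with two structural observations about how $\vartheta$ interacts with the almost-simple decomposition of $G$. First, by Lemma~\ref{lem:isogenyindependent}, injectivity of $\psi_G$ depends only on the isogeny class of $G$, so I may assume $G$ is the direct product of its almost-simple factors $H_1, \ldots, H_k$. Since $\vartheta$ preserves the pinning and the Dynkin diagram of $G$ decomposes into connected components according to these factors, $\vartheta$ must permute the $H_j$.

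Next I group the almost-simple factors according to $\vartheta$-orbits, writing $G = G_1 \times \cdots \times G_r$ with each $G_i$ the product of factors lying in one $\vartheta$-orbit. Each $G_i$ is $\vartheta$-stable, the Weyl group splits compatibly as $\absW = \absW_1 \times \cdots \times \absW_r$, and the $\vartheta$-stable standard parabolic subgroups of $\absW$ are precisely direct products of $\vartheta|_{\absW_i}$-stable standard parabolic subgroups of the factors. It follows that $(w_1,\ldots,w_r)$ is $\vartheta$-elliptic iff each $w_i$ is $\vartheta|_{\absW_i}$-elliptic, that the set of $\vartheta$-torsion elements and the relation of $\vartheta$-conjugacy factor through the direct product, and hence that $\psi_G$ is the Cartesian product of the maps $\psi_{G_i}$ and is injective iff each $\psi_{G_i}$ is.

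Finally, within a single $\vartheta$-orbit of length $m$, write $G_i = H \times \vartheta(H) \times \cdots \times \vartheta^{m-1}(H)$ with $H$ almost-simple; then $\vartheta^m$ restricts to a pinned automorphism of $H$. A standard \emph{collapse} (or \emph{norm}) argument shows that every $\vartheta$-conjugacy class in $G_i$ has a unique representative, up to $\vartheta^m|_H$-conjugacy, of the form $(g, e, \ldots, e)$ with $g \in H$, and the analogous statement holds at the Weyl-group level: a $\vartheta$-elliptic class in $\absW_i$ has a representative $(w, e, \ldots, e)$ where $w$ is required to be $\vartheta^m$-elliptic in the Weyl group of $H$. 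This yields a bijection of the relevant $\vartheta$-invariants for $G_i$ with the corresponding $\vartheta^m$-invariants for $H$ that intertwines $\psi_{G_i}$ with the $\psi$-map attached to $(H,\vartheta^m)$, so $\psi_{G_i}$ is injective iff that map is. Chaining the three reductions gives the corollary.

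The main obstacle is the bookkeeping in the last step: one must verify that the collapse map is $\vartheta$-conjugacy equivariant, is bijective on torsion elements and on elliptic Weyl-group classes, and intertwines the lifting $\absW_i \to N_{G_i}(A_{G_i})$ used in the definition of $\psi_{G_i}$. These verifications use only that $\vartheta$ acts freely on the factors of the orbit and via a pinned automorphism within each factor, but they are what make the reduction rigorous.
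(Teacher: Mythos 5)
Your proof is correct and follows the same route as the paper: Lemma~\ref{lem:isogenyindependent} reduces to the case where $G$ is simply connected, hence a direct product of almost-simple simply connected factors, after which the paper simply asserts that ``the result follows.'' Your grouping of factors into $\vartheta$-orbits and the collapse/norm argument identifying $\vartheta$-conjugacy in an orbit of length $m$ with $\vartheta^m$-conjugacy in a single factor supply exactly the details the paper leaves implicit (in particular, the correct interpretation of $\psi_H$ for a factor $H$ that is not itself $\vartheta$-stable).
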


\begin{proof}
Since the injectivity of $\psi_G$ is independent of the isogeny type of $G$, we may assume $G$ is simply connected.  Since $G$ is simply connected, we may write~\cite[Theorem~21.51 and discussion in~24.a]{milne:algebraic} the $\C$-group $G$ as a product of almost-simple simply connected complex groups: $G = G_1 \times G_2 \times \cdots \times G_\tau$.  The result follows.
\end{proof}

\begin{theorem}   \label{thm:psiinjects}
The map $\psi_G \, \colon \, \absW^{\ee}_{\sim} \rightarrow G^{\vartheta-\tor}_{\sim}$ is injective.
\end{theorem}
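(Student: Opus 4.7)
The plan is to follow the roadmap laid out at the start of Section~\ref{sec:psiinjective}. First I would apply Corollary~\ref{cor:reductionsimple} to reduce to the case where $G$ is almost-simple, and then use Lemma~\ref{lem:isogenyindependent} to fix any convenient representative of each isogeny class. The remaining task is a finite case-by-case verification over the possible pairs $(G,\vartheta)$: the untwisted types $A_n,B_n,C_n,D_n,E_6,E_7,E_8,F_4,G_2$ together with the twisted diagrams ${}^2A_n$, ${}^2D_n$, ${}^3D_4$, and ${}^2E_6$.

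The core strategy in each case is to exhibit a sufficiently fine invariant on $\vartheta$-conjugacy classes of lifts of $\vartheta$-elliptic elements. Given $w_1,w_2\in\absW^{\ee}$ with respective lifts $n_1,n_2\in N_G(A)$ that are $\vartheta$-conjugate in $G$, I want to conclude that they are already $\vartheta$-conjugate in $N_G(A)$ --- equivalently, that $w_1$ and $w_2$ are $\vartheta$-conjugate in $\absW$. By Lemma~\ref{lem:sigmaconj} each $n_j$ is $\vartheta$-conjugate in $G$ to an element $a_j\in A^\vartheta$, and the $\vartheta$-conjugacy class of $n_j$ in $G$ is controlled by an invariant such as the cocharacter of $a_j$ (modulo the relevant affine Weyl group action) or, in the classical cases, by the eigenvalues of $n_j$ acting on the defining representation of $G$. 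I would then show in each case that such an invariant is fine enough to recover the class of $w$ in $\absW^{\ee}_{\sim}$.

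For the classical types $A_n$, $B_n$, $C_n$, $D_n$, ${}^2A_n$, ${}^2D_n$, I would realize $G$ concretely as a matrix group with the anti-diagonal form $K$, parametrize $\vartheta$-elliptic classes by (signed) cycle partitions $\vec{\ell}$, and write a natural lift of the associated Weyl element as a block matrix in which each block cyclically permutes a standard basis up to signs. Lemma~\ref{lem:veryelement} then computes the block characteristic polynomials to be of the form $t^{\ell_\nu}\mp 1$, from which the partition data---and hence the $\vartheta$-conjugacy class of $w$---can be read off uniquely from the eigenvalues of the lift. Type $A_n$ is trivial since there is a unique elliptic class, and ${}^2A_n$ is an adjustment incorporating the pinned diagram automorphism.

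For the exceptional pairs $G_2$, $F_4$, $E_6$, $E_7$, $E_8$, ${}^3D_4$, ${}^2E_6$ the set $\absW^{\ee}_{\sim}$ is small and tabulated in~\cite{gecketal:minimal}, \cite{geck:pfeiffer:characters}, \cite{geck-pfeiffer:irreducible}, and~\cite{adams-he-nie:from}. For each class I would fix a minimal-length representative, lift it to $N_G(A)$ via the Tits extension, move the lift into $A^\vartheta$ using Lemma~\ref{lem:sigmaconj}, and compare the resulting cocharacters directly. The main obstacle throughout the proof is not conceptual but the sheer volume of careful bookkeeping: tracking signs in the Tits lifts, unwinding $\absW^\vartheta$ and $A^\vartheta$ explicitly in the twisted cases, and verifying class by class in each exceptional type that the extracted invariant actually separates the (short) list of $\vartheta$-elliptic classes. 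This is precisely the portion of the argument that the authors themselves flag as the site of possible typos in the introduction.
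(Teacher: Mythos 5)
Your outline reproduces the paper's architecture exactly: reduce to the almost-simple case via Corollary~\ref{cor:reductionsimple}, use Lemma~\ref{lem:isogenyindependent} to pick a convenient isogeny representative, separate the classical $\vartheta$-elliptic classes (parametrized by partitions) by characteristic-polynomial invariants computed with Lemma~\ref{lem:veryelement}, and dispose of $G_2$, $F_4$, $E_n$, ${}^3D_4$, ${}^2E_6$ by a finite comparison of Kac coordinates (the paper simply cites the tables of Adams--He--Nie together with the bijectivity of $\chi_G$ for adjoint $G$, rather than recomputing them). For untwisted $A$, $B$, $C$, $D$ and for ${}^2D_{\ell+1}$ your proposed invariant is precisely the one used.

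The one place your plan would fail as stated is type ${}^2A_{\ell-1}$. There $\vartheta(g)=\Ad(J)(g^{-1})^T$ is an \emph{outer} automorphism that interchanges the defining representation with its dual, so $\C^\ell$ is not a module for $G\rtimes\langle\vartheta\rangle$ and ``eigenvalues of $n_j$ on the defining representation'' is not a $\vartheta$-conjugation invariant. The paper instead computes the characteristic polynomial of $\Ad(n_{\vec\ell})\circ\vartheta$ acting on the adjoint representation $\sllie_\ell$; this is a substantially heavier computation (the answer involves factors $(t^{2\lcm(\ell_\rho,\ell_\sigma)}-1)^{\gcd(\ell_\rho,\ell_\sigma)}$ coming from off-diagonal blocks), and it is then not obvious that this polynomial determines the partition --- that requires the separate combinatorial ``peeling'' argument of Lemma~\ref{lem:polygivesl}, which extracts the auxiliary polynomial $p_{\vec\ell}(t)=\frac{1}{t+1}\prod_\nu(t^{\ell_\nu}+1)$ from $q_{n_{\vec\ell}}$. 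These two ingredients (the adjoint-representation invariant and the recovery lemma) are the genuinely nontrivial ideas your sketch is missing; the rest is, as you say, bookkeeping.
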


\begin{proof}
Thanks to Corollary~\ref{cor:reductionsimple} it is enough to show that $\psi_{G'}$ is injective for $G'$ being an almost-simple complex  group.  Thanks to Lemma~\ref{lem:isogenyindependent}, it is enough to show that $\psi_G$ is injective for some $G$  that is isogenous to $G'$.  This is done on a case-by-case basis in Sections~\ref{sec:injAlminusone} to~\ref{sec:injexceptional}.
\end{proof}

\subsection{Injectivity of \texorpdfstring{$\psi_G$ for type $A_{\ell-1}$ with $\ell \geq 3$}{A unique}}   \label{sec:injAlminusone}

In this situation there is only one elliptic conjugacy class in the Weyl group.  Thus, the injectivity of $\psi_G$ is automatic.  However, in the interest of establishing notation in a familiar setting, we  carry out a proof that has the same flavor as the proofs for the other root system types.

Let $G = \SL_{\ell} = \SL_{\ell}(\C)$ realized as the set of $\ell \times \ell$ matrices of determinant $1$.  The Lie algebra of $G$ is then $\gg$, the set of $\ell \times \ell$ matrices of trace zero.  We take $A$ to be the diagonal matrices in $G$ and $B$ to be the upper triangular matrices in $G$.  For  $a = \Diag(a_1, a_2, \ldots, a_\ell) \in A \leq \SL_{\ell}$ and $1 \leq k < \ell$, we let $\alpha_k(a) = a_k/a_{k+1}$ be the simple roots of $G$ with respect to $A$ and $B$.

Following Bourbaki~\cite[Planche I]{bourbaki:lie4to6}  we identify $V$ with 
the hyperplane in $\R^{\ell}$ for which the sum of the coordinates is zero, identify $\Phi$ with the vectors $e_i - e_j$ with $1 \leq i \neq j \leq \ell$, and identify $\Delta$ with the roots $\alpha_k = e_k-e_{k+1}$ for $1 \leq k < \ell$.

For $1 \leq k < \ell$ we define $s_k \in \SL_\ell$  to be the $\ell \times \ell$ matrix such that 
$$(s_k)_{ij} = 
\begin{cases}
    1 &\qquad \text{$i=j$ and $i \not\in \{k,k+1\}$}\\
    1 &\qquad \text{$i  = k$ and $j = k+1$} \\
     -1 &\qquad \text{$i  = k+1$ and $j = k$} \\
     0 &\qquad \text{otherwise.}
\end{cases}$$
The image of  the order four element $s_k$ in $\absW$ is the simple reflection corresponding to $\alpha_k$.

The Coxeter class is the only elliptic conjugacy class in $\absW$, and it corresponds to the partition $\vec{\ell} = (\ell)$ of $\ell$. Define $n_{({\ell})} = \prod_{j=1}^{\ell -1} s_j$.   This is a lift of a Coxeter element in $\absW$, and it has order $2 \ell$ if $\ell$ is even or $\ell$ if $\ell$ is odd.

Any element $g \in \SL_{\ell}$ acts on $\C^{\ell}$ by matrix multiplication.  The characteristic polynomial of $g$ with respect to this action is an $\SL_\ell$-conjugation invariant polynomial.  Thanks to Lemma~\ref{lem:veryelement} the characteristic polynomial of $n_{({\ell})}$ with respect to this action is
$ q_{n_{({\ell})}}(t) = t^{\ell} + (-1)^{\ell}$.

Since any two lifts of an elliptic element in $\absW$ are $A$-conjugate and since there exists a unique conjugacy class of elliptic elements in $\absW$,  we conclude that two elliptic elements of $\absW$ are $\absW$-conjugate if and only if their lifts are $\SL_{\ell}$-conjugate.

\subsection{Injectivity of \texorpdfstring{$\psi_G$ for type $B_\ell$ with $\ell \geq 2$}{inject B}}   \label{sec:injBl}   Let $G = \SO_{2\ell + 1} = \SO_{ 2 \ell +1}(\C)$ realized as those $g \in \SL_{2\ell +1}$ such that $g = (K g\inv K)^T$.   Recall that $K$ denotes the $(2\ell + 1) \times (2 \ell + 1)$ matrix with ones on the anti-diagonal and zeroes elsewhere.   
Then $\gg$, the Lie algebra of $G$, is $\solie_{2 \ell+1}$  realized as the Lie subalgebra of $\sllie_{2\ell+1}$ consisting of those matrices $X$ for which $X = -KX^TK$.  We take $A$ to be the diagonal matrices in $G$ and $B$ to be the upper triangular matrices in $G$.   For  $a = \Diag(a_1, a_2, \ldots, a_\ell, 1, a_\ell\inv, \ldots , a_2\inv, a_1\inv) \in A \leq \SO_{2\ell +1}$, we let $\alpha_\ell(a) = a_\ell$  and $\alpha_k(a) = a_k/a_{k+1}$ for $1 \leq k < \ell$ be the  simple roots of $G$ with respect to $A$ and $B$.

Following Bourbaki~\cite[Planche II]{bourbaki:lie4to6} , we identify $V$  with $\R^\ell$, identify $\Phi$  with the set of vectors $\pm e_i$ for $1 \leq i \leq \ell$ and  $\pm e_i \pm e_j$ for $1 \leq i < j \leq \ell$,   and identify $\Delta$ with
$$\alpha_1 = e_1 - e_2,  \,  \alpha_2 = e_2 - e_3,  \, \ldots , \, \alpha_{\ell-1} = e_{\ell-1} - e_{\ell}, \, \alpha_\ell =  e_{\ell}.$$
We define $t_\ell \in \SO_{2 \ell +1}$ to be the $(2 \ell +1) \times (2 \ell + 1)$ matrix such that 
$$(t_\ell)_{ij} = 
\begin{cases}
    1 &\qquad \text{$1 \leq i=j < \ell$ or $\ell + 2 < i=j \leq  2\ell + 1$}\\
    1 &\qquad \text{$i \neq j$ and $i,j \in \{\ell, \ell + 2\}$}\\
    - 1 &\qquad \text{$i = j = \ell + 1$}  \\
     0 &\qquad \text{otherwise,}
\end{cases}$$
and for $1 \leq k < \ell$ we define $s_k \in \SO_{2 \ell +1}$ to be the $(2 \ell +1) \times (2 \ell+1)$ matrix such that 
$$(s_k)_{ij} = 
\begin{cases}
    1 &\qquad \text{$i=j$ and $i \not\in \{k,k+1,2\ell -k+2, 2\ell-k+1 \}$}\\
    1 &\qquad \text{$i \neq j$ and  $i,j \in \{k,k+1\}$}\\
     1 &\qquad \text{$i \neq j$ and  $i,j \in \{2 \ell - k +1, 2 \ell - k+2\}$}  \\
     0 &\qquad \text{otherwise.}
\end{cases}$$
The image of  the order two element $t_\ell$ in $\absW$ is the simple reflection corresponding to $\alpha_\ell$ while the image of the order two element  $s_k$ in $\absW$ is the simple reflection corresponding to $\alpha_k$.    For $1 \leq k < \ell$ define $t_k \in \SO_{2 \ell +1}$ by 
$$t_k = s_k \cdot s_{k+1} \cdot s_{k+2} \cdot \cdots \cdot s_{\ell-2} \cdot s_{\ell -1} \cdot t_\ell \cdot s_{\ell-1} \cdot s_{\ell-2} \cdot  \cdots  \cdot s_{k+1} \cdot s_k.$$
Note that the $t_k$ defined above is denoted by $t_{k-1}$ in~\cite[\S3.4]{geck:pfeiffer:characters}.

The elliptic conjugacy classes are parameterized by the partitions of $\ell$~\cite[7.16]{he:minimal}.  
Fix a partition $\vec{\ell} = (\ell_\mu \geq \ell_{\mu-1} \geq \cdots \geq \ell_1)$  of $\ell$.   Recall that for $1 \leq \nu \leq \mu$ we define $\ell'_\nu$ = $\ell_1 + \ell_2 + \cdots + \ell_{\nu - 1}$.  We define  $n_{\vec{\ell}}$ to be the product $c_1 \cdot c_2 \cdot \cdots \cdot c_\mu$ where
$c_{\nu} = s_{\ell'_{\nu} + 1}\cdot s_{\ell'_{\nu} + 2} \cdot \cdots \cdot s_{\ell'_{\nu} + \ell_{\nu}-1} \cdot t_{\ell'_{\nu} + \ell_{\nu}}$.
Thus
$$n_{\vec{\ell}} = \prod_{\nu = 1}^{\mu}(s_{\ell'_{\nu} + 1}\cdot s_{\ell'_{\nu} + 2} \cdot \cdots \cdot s_{\ell'_{\nu} + \ell_{\nu}-1} \cdot t_{\ell'_{\nu} + \ell_{\nu}}).$$
The image of $n_{\vec{\ell}}$ in $\absW$ is an elliptic element belonging to the conjugacy class associated to $\vec{\ell}$.

\begin{remark}   \label{rem:regellBl}
Since $w \in \absW$ is regular elliptic if and only if $\hh^{w} = \{0\}$ and $\langle w \rangle$ acts freely on $\Phi$,  the regular elliptic elements of $\absW$  correspond to partitions of the form  $(d,d, \ldots , d)$ where $d \in \Z_{\geq 1}$ divides $\ell$. 
\end{remark}

Any element $g \in \SO_{2\ell+1}$ acts on $\C^{2\ell+1}$ by matrix multiplication.  The characteristic polynomial of $g$ with respect to this action is an $\SO_{2\ell+1}$-conjugation invariant polynomial.  Using Lemma~\ref{lem:veryelement}, one finds that  the characteristic polynomial of $n_{\vec{\ell}}$ with respect to this action is
$$q_{n_{\vec{\ell}}}(t) = (t - (-1)^{\mu}) \cdot \prod_{\nu = 1}^\mu (t^{2 \ell_\nu} - 1).$$
  If $\vec{\ell}$ and $\vec{\ell}'$ are two partitions of $\ell$ for which the associated elements $n_{\vec{\ell}}$
and $n_{\vec{\ell}'}$ of $N_G(A)$ are $G$-conjugate, then we have $q_{n_{\vec{\ell}}} = q_{n_{\vec{\ell}'}}$.  The only way this can happen is if $\vec{\ell} = \vec{\ell}'$.

Since any two lifts of an elliptic element in $\absW$ are $A$-conjugate,  we conclude that two elliptic elements of $\absW$ are $\absW$-conjugate if and only if their lifts are $\SO_{2 \ell+1}$-conjugate.

\subsection{Injectivity of \texorpdfstring{$\psi_G$ for type $C_\ell$ with $\ell \geq 2$}{inject C}}  \label{sec:injCl}  
Recall that $K$ denotes the $\ell \times \ell$ matrix with ones on the anti-diagonal and zeroes elsewhere. Denote by $J$  the product of $K$ and  $\Diag(1,1,  \ldots, 1, 1,-1, -1,   \ldots, -1, -1)$; here there are $\ell$ copies of $1$ followed by $\ell$ copies of $-1$.   We realize  $G = \Sp_{2 \ell} = \Sp_{2\ell}( \C)$ as those $g \in \SL_{2 \ell}$ such that $g^T J g= J$ and we realize $\splie_{2\ell}$ as those $X \in \sllie_{2 \ell}$ such that $X^T J = - JX$.
  We take $A$ to be the diagonal matrices in $G$ and $B$ to be the upper triangular matrices in $\Sp_{2 \ell}$.   For  $a = \Diag(a_1, a_2, \ldots, a_\ell, a_\ell\inv, \ldots , a_2\inv, a_1\inv) \in A$, we let $\alpha_\ell(a) = a_\ell^2$  and $\alpha_k(a) = a_k/a_{k+1}$ for $1 \leq k < \ell$ be the  simple roots of $G$ with respect to $A$ and $B$.

Following Bourbaki~\cite[Planche III]{bourbaki:lie4to6} , we identify $V$  with $\R^\ell$, identify $\Phi$ with
the set of vectors $\pm 2 e_i$ for $1 \leq i \leq \ell$ and $\pm e_i \pm e_j$ for $1 \leq i < j \leq \ell$, and identify  $\Delta$ with the roots
$$\alpha_1 = e_1 - e_2,  \,  \alpha_2 = e_2 - e_3,  \, \ldots , \, \alpha_{\ell-1} = e_{\ell-1} - e_{\ell}, \, \alpha_\ell = 2 e_\ell.$$
We define $t_\ell \in \Sp_{2 \ell}$ to be the $2 \ell \times 2 \ell$ matrix such that 
$$(t_\ell)_{ij} = 
\begin{cases}
    1 &\qquad \text{$1 \leq i=j < \ell$ or $\ell + 1 < i=j \leq  2\ell$}\\
    1 &\qquad \text{$i = \ell$ and  $j = \ell + 1$}\\
    - 1 &\qquad \text{$i = \ell + 1$ and  $j = \ell$}  \\
     0 &\qquad \text{otherwise,}
\end{cases}$$
and for $1 \leq k < \ell$ we define $s_k \in \Sp_{2 \ell}$ to be the $2 \ell \times 2 \ell$ matrix such that 
$$(s_k)_{ij} = 
\begin{cases}
    1 &\qquad \text{$i=j$ and $i \not\in \{k,k+1,2\ell -k+1, 2\ell-k \}$}\\
    1 &\qquad \text{$i  \neq j$ and $i,j \in \{k,k+1 \}$ }\\
     1 &\qquad \text{$i  \neq j$ and $i,j \in \{2 \ell - k, 2 \ell - k + 1\}$ }\\
     0 &\qquad \text{otherwise.}
\end{cases}$$
The image of  the order four element $t_\ell$ in $\absW$ is the simple reflection corresponding to $\alpha_\ell$ while the image of the order two element  $s_k$ in $\absW$ is the simple reflection corresponding to $\alpha_k$.    For $1 \leq k < \ell$ define $t_k \in \Sp_{2 \ell}$ by 
$$t_k = s_k \cdot s_{k+1} \cdot s_{k+2} \cdot \cdots \cdot s_{\ell-2} \cdot s_{\ell -1} \cdot t_\ell \cdot s_{\ell-1} \cdot s_{\ell-2} \cdot  \cdots  \cdot s_{k+1} \cdot s_k.$$
Note that the $t_k$ defined above is denoted by $t_{k-1}$ in~\cite[\S3.4]{geck:pfeiffer:characters}.

Since the Weyl group of type $C_\ell$ may be naturally identified with the Weyl group of type $B_\ell$,
the elliptic conjugacy classes in $\absW$ correspond to partitions  $\vec{\ell} = (\ell_\mu \geq \ell_{\mu-1} \geq \cdots \geq \ell_1)$  of $\ell$~\cite[7.16]{he:minimal}.
Recall that for  $1 \leq \nu \leq \mu$ we define $\ell'_\nu$ = $\ell_1 + \ell_2 + \cdots + \ell_{\nu - 1}$.   We define  $n_{\vec{\ell}}$ to be the product $c_1 \cdot c_2 \cdot \cdots \cdot c_\mu$ where
$c_{\nu} = s_{\ell'_{\nu} + 1}\cdot s_{\ell'_{\nu} + 2} \cdot \cdots \cdot s_{\ell'_{\nu} + \ell_{\nu}-1} \cdot t_{\ell'_{\nu} + \ell_{\nu}}$.
Thus
$$n_{\vec{\ell}} = \prod_{\nu = 1}^{\mu}(s_{\ell'_{\nu} + 1}\cdot s_{\ell'_{\nu} + 2} \cdot \cdots \cdot s_{\ell'_{\nu} + \ell_{\nu}-1} \cdot t_{\ell'_{\nu} + \ell_{\nu}}).$$
The image of $n_{\vec{\ell}}$ in $\absW$ is an elliptic element belonging to the conjugacy class associated to $\vec{\ell}$.

\begin{remark}   \label{rem:regellCl}
Since $w \in \absW$ is regular elliptic if and only if $\hh^{w} = \{0\}$ and $\langle w \rangle$ acts freely on $\Phi$,  the regular elliptic elements of $\absW$  correspond to partitions of the form  $(d,d, \ldots , d)$ where $d \in \Z_{\geq 1}$ divides $\ell$. 
\end{remark}

Any element $g \in \Sp_{2\ell}$ acts on $\C^{2\ell}$ by matrix multiplication.  The characteristic polynomial of $g$ with respect to this action is an $\Sp_{2\ell}$-conjugation invariant polynomial.  Using Lemma~\ref{lem:veryelement}, one finds that  the characteristic polynomial of $n_{\vec{\ell}}$ with respect to this action is
$$q_{n_{\vec{\ell}}} (t)=  \prod_{\nu = 1}^\mu (t^{2 \ell_\nu} + 1).$$
  If $\vec{\ell}$ and $\vec{\ell}'$ are two partitions of $\ell$ for which the associated elements $n_{\vec{\ell}}$
and $n_{\vec{\ell}'}$ of $N_G(A)$ are $G$-conjugate, then we have $q_{n_{\vec{\ell}}} = q_{n_{\vec{\ell}'}}$.  The only way this can happen is if $\vec{\ell} = \vec{\ell}'$.

Since any two lifts of an elliptic element in $\absW$ are $A$-conjugate,  we conclude that two elliptic elements of $\absW$ are $\absW$-conjugate if and only if their lifts are $\Sp_{2 \ell}$-conjugate.

\subsection{Injectivity of \texorpdfstring{$\psi_G$ for type $D_\ell$ with $\ell \geq 3$}{inject D}}   \label{sec:injDl}
 Let $G = \SO_{2\ell} = \SO_{2 \ell}(\C)$ realized as those $g \in \GL_{2\ell}$ such that $g = (K g\inv K)^T$ and $\det(g) = 1$.  If we remove the determinant one condition, then $g$ is an element of $\On_{2\ell} = \On_{2 \ell}(\C)$.  Then $\gg$, the Lie algebra of $G$, is $\solie_{2 \ell}$  realized as the Lie subalgebra of $\sllie_{2\ell}$ consisting of those matrices $X$ for which $X = -KX^TK$.  We take $A$ to be the diagonal matrices in $G$ and $B$ to be the upper triangular matrices in $G$.   For  $a = \Diag(a_1, a_2, \ldots, a_\ell, a_\ell\inv, \ldots , a_2\inv, a_1\inv) \in A$, we let $\alpha_\ell(a) = a_{\ell -1} \cdot a_\ell$  and $\alpha_k(a) = a_k/a_{k+1}$ for $1 \leq k < \ell$ be the  simple roots of $G$ with respect to $A$ and $B$.

Following Bourbaki~\cite[Planche IV]{bourbaki:lie4to6} we identify $V$ with $\R^\ell$, identify $\Phi$ with the vectors $\pm e_i \pm e_j$ for $1 \leq i < j \leq \ell$,  and identify $\Delta$ with the roots 
$$\alpha_1 = e_1 - e_2,  \,  \alpha_2 = e_2 - e_3,  \, \ldots , \, \alpha_{\ell-1} = e_{\ell-1} - e_{\ell}, \, \alpha_\ell = e_{\ell-1} + e_{\ell}.$$

 For $1 \leq k \leq \ell$ we define $t_k \in \On_{2 \ell}$ to be the $2 \ell \times 2 \ell$ matrix such that 
$$(t_k)_{ij} = 
\begin{cases}
    1 &\qquad \text{$1 \leq i=j \leq 2l$ and $i,j \not\in \{ k, 2 \ell - k + 1 \} $}\\
    1 &\qquad \text{$i \neq j$ and  $i,j \in \{ k, 2 \ell - k + 1 \} $}\\
     0 &\qquad \text{otherwise.}
\end{cases}$$
For $1 \leq k < \ell$ we define $s_k \in \SO_{2 \ell}$ to be the $2 \ell \times 2 \ell$ matrix such that 
$$(s_k)_{ij} = 
\begin{cases}
    1 &\qquad \text{$i=j$ and $i \not\in \{k,k+1,2\ell -k+1, 2\ell-k \}$}\\
    1 &\qquad \text{$i \neq j$ and  $i,j \in \{k,k+1\}$}\\
     1 &\qquad \text{$i \neq j$ and  $i,j \in \{2 \ell - k, 2 \ell - k+1\}$}  \\
     0 &\qquad \text{otherwise.}
\end{cases}$$
The image of the order two element  $s_k$ in $\absW$ is the simple reflection corresponding to $\alpha_k$.

The elliptic conjugacy classes in $\absW$ correspond to partitions  $\vec{\ell} = (\ell_\mu \geq \ell_{\mu-1} \geq \cdots \geq \ell_1)$  of $\ell$ where $\mu$ is even~\cite[7.20]{he:minimal}.  Fix such a partition.   Recall that for  $1 \leq \nu \leq \mu$ we define $\ell'_\nu$ = $\ell_1 + \ell_2 + \cdots + \ell_{\nu - 1}$.  We define  ${n}_{\vec{\ell}}$ to be the product $c_1 \cdot c_2 \cdot \cdots \cdot c_\mu$ where
$c_{\nu} = s_{\ell'_{\nu} + 1}\cdot s_{\ell'_{\nu} + 2} \cdot \cdots \cdot s_{\ell'_{\nu} + \ell_{\nu}-1} \cdot t_{\ell'_{\nu} + \ell_{\nu}}$.
Note that $c_{\nu} \in \On_{2 \ell}$, but since $\mu$ is even we have that $n_{\vec{\ell}}$ belongs to  $\SO_{2 \ell}$.
We have
$${n}_{\vec{\ell}} = \prod_{\nu = 1}^{\mu}(s_{\ell'_{\nu} + 1}\cdot s_{\ell'_{\nu} + 2} \cdot \cdots \cdot s_{\ell'_{\nu} + \ell_{\nu}-1} \cdot t_{\ell'_{\nu} + \ell_{\nu}}).$$
Let $\dorbit_{\vec{\ell}}$ denote the conjugacy class  in $\absW$ of the image of ${n}_{\vec{\ell}}$ in  $\absW$.   The correspondence $\vec{\ell} \leftrightarrow \dorbit_{\vec{\ell}}$ defines a bijective correspondence between partitions of $\ell$ into an even number of parts and $\vartheta$-elliptic $\vartheta$-conjugacy classes in $\absW$.

\begin{remark}   \label{rem:regellDl}
Since $w \in \absW$ is regular elliptic if and only if $\hh^{w} = \{0\}$ and $\langle w \rangle$ acts freely on $\Phi$,  the regular elliptic elements of $\absW$  correspond to partitions of the form  $(d,d, \ldots , d)$ where $d \in \Z_{\geq 1}$ divides $\ell$ and $\frac{\ell}{d}$ is even or $(e,e, \ldots , e, e, 1)$ where $e \in \Z_{\geq 1}$ divides $\ell-1$ and $\frac{\ell-1}{e}$ is odd.
\end{remark}

Any element $g \in \SO_{2\ell}$ acts on $\C^{2\ell}$ by matrix multiplication.  The characteristic polynomial of $g$ with respect to this action is an $\SO_{2\ell}$-conjugation invariant polynomial.  Using Lemma~\ref{lem:veryelement}, one finds that  the characteristic polynomial of $n_{\vec{\ell}}$ with respect to this action is
$$q_{n_{\vec{\ell}}} (t)=   \prod_{\nu = 1}^\mu (t^{2 \ell_\nu} - 1).$$
 If $\vec{\ell}$ and $\vec{\ell}'$ are two partitions of $\ell$ for which the associated elements $n_{\vec{\ell}}$
and $n_{\vec{\ell}'}$ of $N_G(A)$ are $G$-conjugate, then we have $q_{n_{\vec{\ell}}} = q_{n_{\vec{\ell}'}}$.  The only way this can happen is if $\vec{\ell} = \vec{\ell}'$.

Since any two lifts of an elliptic element in $\absW$ are $A$-conjugate,  we conclude that two elliptic elements of $\absW$ are $\absW$-conjugate if and only if their lifts are $\SO_{2 \ell}$-conjugate.

\subsection{Injectivity of \texorpdfstring{$\psi_G$ for type $\lsup{2}A_{\ell-1}$ with $\ell \geq 3$}{inject 2A}} 
  \label{sec:inj2Alminusone}

We adopt the notation of Section~\ref{sec:injAlminusone}.  In particular, we have $G = \SL_{\ell}$,  $\gg = \sllie_{\ell}$, and the roots in $\Delta$  are  $\alpha_k := e_k - e_{k+1}$  for  $1 \leq k < \ell$.

  We begin by defining an involution of $G$, and hence $\gg$.  Let $K$ denote the $\ell \times \ell$ matrix with ones on the anti-diagonal and zeroes elsewhere. 
  We define an order two element $J$ as follows.  If $\ell$ is odd, then $J = K$. If $\ell$ is even, then $J$ is the product of $K$ with  $\Diag(i,i,  \ldots, i, i,-i, -i,   \ldots, -i, -i)$; here there are $\ell/2$ copies of $i$ followed by $\ell/2$ copies of $-i$. 
In all cases we have that $\det(J) = (-1)^{\lfloor \frac{\ell}{2} \rfloor}$; that is, $J$ has determinant $1$ if $\ell$ is congruent to $0$ or $1$ modulo $4$ and is $-1$ otherwise.   We also have that $J$, and hence $\Ad(J)$, always has order two.
  We define the involution $\vartheta$ on $G$ by $\vartheta(g) = \Ad(J) (g\inv)^T$ for $g \in G$.  We abuse notation and denote the resulting involution on  $\gg$ by $\vartheta$ as well.  Note that $\vartheta(X) = -\Ad(J)(X^T) = -J {X}^T J$ for $X \in \gg$.

 For $1 \leq k < \ell$ we have $s_k \in \SL_\ell$ as in Section~\ref{sec:injAlminusone} and we define $\tilde{s}_k \in \GL_{\ell}$ to be the $\ell \times \ell$ matrix
\begin{equation*}
(\tilde{s}_k)_{ij}  = \begin{cases}
  1 & \qquad \text{$i=j$ and $i \not \in \{k,k+1\}$} \\
  1 & \qquad \text{$i=k$ and $j = k+1$} \\
  1 & \qquad \text{$i=k+1$ and $j = k$} \\
  0 & \qquad \text{otherwise.}
  \end{cases}  
\end{equation*}
  The image of the order four element $s_k$  in $\absW$ is the simple reflection corresponding to $\alpha_k$.

Suppose
$\vec{\ell} = \ell_\mu \geq \ell_{\mu-1} \geq \cdots \geq \ell_1$ is a partition of $\ell$ for which each $\ell_\nu$ is odd.     Recall that $\ell'_{\nu} = \ell_1 + \ell_2 + \cdots + \ell_{\nu-1}$.  For $1 < \nu \leq \mu$, set
 $$c_\nu = s_{\ell'_\nu + 1} \cdot  s_{\ell'_\nu + 2} \cdot  s_{\ell'_\nu + 3} \cdot \cdots  \cdot  s_{\ell'_\nu + (\ell_\nu -2)} \cdot   s_{\ell'_\nu + (\ell_\nu -1)}.$$
 For $1 < \nu \leq \mu$, the element $c_\nu$ has order $\ell_\nu$.
 If $\ell$ modulo $4$ is $0$ or $1$, then define
$$c_1 = s_{1} \cdot  s_{2} \cdot  s_{3} \cdot \cdots  \cdot  s_{(\ell_1 -2)} \cdot   s_{(\ell_1 -1)}$$
and note that this element has order $\ell_1$.
If $\ell$ modulo $4$ is $2$ or $3$ and $\ell_1 > 1$, then define 
$$c_1 = s_{1} \cdot  s_{2} \cdot  s_{3} \cdot \cdots  \cdot  s_{(\ell_1 -2)} \cdot   \tilde{s}_{(\ell_1 -1)}.$$
 If $\ell$ modulo $4$ is $2$ or $3$ and $\ell_1 = 1$, then define $c_1 = \Diag(-1,1,1,1, \ldots , 1,1,1)$.
 Note that if $\ell$ is congruent to $2$ or $3$ modulo $4$, then the order of $c_1$ is $2 \ell_1$.
 
Define $\tilde{n}_{\vec{\ell}} = c_1 \cdot c_2 \cdot \cdots \cdot c_\mu$.  We have $n_{\vec{\ell}} := \tilde{n}_{\vec{\ell}}J \in \SL_{\ell}$.  Let $w_{\vec{\ell}}$ denote the image of $n_{\vec{\ell}}$ in $\absW$ and let $\dorbit_{\vec{\ell}}$ denote the $\vartheta$-conjugacy class of $w_{\vec{\ell}}$ in $\absW$.   The correspondence $\vec{\ell} \leftrightarrow \dorbit_{\vec{\ell}}$ between partitions of $\ell$ into odd parts and $\vartheta$-elliptic $\vartheta$-conjugacy classes in $\absW$ is a bijection~\cite[7.14]{he:minimal}.
Moreover,  $n_{\vec{\ell}}$ is a lift of $w_{\vec{\ell}}$ into $N_G(A)$.

\begin{remark}   \label{rem:regell2Al}
Since $w \in \absW$ is regular $\vartheta$-elliptic if and only if $\hh^{w \rtimes \vartheta} = \{0\}$ and $\langle w \rtimes \vartheta \rangle$ acts freely on $\Phi$, the regular $\vartheta$-elliptic elements of $\absW$  correspond to partitions of the form  $(d,d, \ldots , d, d, 1)$ where $d \in \Z_{\geq 1}$ is odd and divides $\ell -1$ or $(e,e, \ldots , e)$ where $e \in \Z_{\geq 1}$ is odd and divides $\ell$ 
\end{remark}

We wish to calculate the characteristic polynomial, $q_{n_{\vec{\ell}}}$, of the linear map $T_{\vec{\ell}} \in \Hom(\sllie_{\ell}, \sllie_{\ell})$ defined by $T_{\vec{\ell}}(X) =  \Ad(n_{\vec{\ell}}) \vartheta (X)= -\Ad(n_{\vec{\ell}}J)(X^T) = -\Ad(\tilde{n}_{\vec{\ell}})(X^T)$.
Since $\vartheta  \circ \Ad(g) = \Ad(\vartheta(g)) \circ \vartheta $ for all $g \in \GL_{\ell}$, it follows that  $q_{n_{\vec{\ell}}} = q_{g n_{\vec{\ell}} \vartheta(g\inv)}$ for all $g \in \SL_{\ell}$.

\begin{lemma}  We have
 $$  q_{n_{\vec{\ell}}}(t) = \dfrac{1}{t+1} 
 \cdot  \left( \prod_{\nu=1}^\mu (t^{\ell_\nu} + 1) \right) 
 \cdot  \left(    \prod_{\tau=1}^{\mu} (t^{2\ell_\tau} - 1)^{ (\ell_\tau-1)/2} \right) 
 \cdot \left(    \prod_{1 \leq \rho < \sigma \leq \mu} (t^{2 \lcm(\ell_\rho,\ell_\sigma)} - 1)^{ \gcd(\ell_\rho,\ell_\sigma)} \right). $$
\end{lemma}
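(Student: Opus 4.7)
The plan is to extend $T_{\vec{\ell}}$ to the full matrix algebra $\mathfrak{gl}_\ell$ by the same formula $X \mapsto -\Ad(\tilde{n}_{\vec{\ell}})(X^T)$ and to compute the characteristic polynomial there. Since $\mathfrak{gl}_\ell = \sllie_\ell \oplus \C \cdot I$ and $T_{\vec{\ell}}(I) = -I$, the characteristic polynomial on $\mathfrak{gl}_\ell$ equals $(t+1)\,q_{n_{\vec{\ell}}}(t)$. Write $\C^\ell = V_1 \oplus \cdots \oplus V_\mu$ where $V_\nu$ is the span of $\{e_{\ell'_\nu + 1}, \ldots, e_{\ell'_\nu + \ell_\nu}\}$, so $\tilde{n}_{\vec{\ell}}$ is block-diagonal with block $c_\nu$ on $V_\nu$. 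Then $\mathfrak{gl}_\ell = \bigoplus_{\sigma, \rho} \mathfrak{gl}_{\sigma \rho}$, where $\mathfrak{gl}_{\sigma \rho}$ denotes matrices supported in rows $V_\sigma$ and columns $V_\rho$. Matrix transposition swaps $\mathfrak{gl}_{\sigma \rho}$ with $\mathfrak{gl}_{\rho \sigma}$, so $T_{\vec{\ell}}$ preserves each diagonal block $\mathfrak{gl}_{\sigma \sigma}$ and each pair $\mathfrak{gl}_{\sigma \rho} \oplus \mathfrak{gl}_{\rho \sigma}$ with $\sigma \neq \rho$, and the characteristic polynomial factors over these pieces.

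The key technical observation is that each $c_\nu$ is orthogonal: every $s_k$ is a signed transposition satisfying $s_k^T = s_k^{-1}$, the permutation matrix $\tilde{s}_k$ is orthogonal, and $\Diag(-1, 1, \ldots, 1)$ is orthogonal, so their product $c_\nu$ is too. Hence $c_\nu^{-T} = c_\nu$, which immediately gives $T_{\vec{\ell}}^2(f) = c_\sigma^2 f c_\rho^{-2}$ on $\mathfrak{gl}_{\sigma \rho}$. Since $c_\nu^{\ell_\nu} = \pm \Id$ and $\ell_\nu$ is odd, the squares of the eigenvalues of $c_\nu$ are precisely the $\ell_\nu$-th roots of unity in either sign case. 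For $\sigma \neq \rho$, the eigenvalues of $T_{\vec{\ell}}^2$ on $\mathfrak{gl}_{\sigma \rho}$ are therefore all products of an $\ell_\sigma$-th root of unity and an $\ell_\rho$-th root of unity, which are the $\lcm(\ell_\sigma, \ell_\rho)$-th roots of unity occurring with multiplicity $\gcd(\ell_\sigma, \ell_\rho)$. Combined with the standard identity $\det(tI - T) = \det(t^2 I - T^2|_{\mathfrak{gl}_{\sigma \rho}})$ valid whenever $T$ swaps two subspaces of equal dimension, this yields the off-diagonal contribution $(t^{2 \lcm(\ell_\sigma, \ell_\rho)} - 1)^{\gcd(\ell_\sigma, \ell_\rho)}$, matching the third product in the claim.

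The main obstacle is the diagonal block $\mathfrak{gl}_{\sigma \sigma}$, where $T_{\vec{\ell}}(f) = -c_\sigma f^T c_\sigma^{-1}$ must be diagonalized directly rather than via a pure swap. The plan is to choose an eigenbasis $v_0, v_1, \ldots, v_{\ell_\sigma - 1}$ of $c_\sigma$ on $V_\sigma$; orthogonality of $c_\sigma$ forces $\langle v_i, v_j \rangle = 0$ unless the eigenvalues are reciprocal, so after rescaling one may arrange $\langle v_i, v_j \rangle = \delta_{i+j \equiv 0 \bmod \ell_\sigma}$. A direct computation then shows that matrix transposition sends $v_i \otimes v_j^*$ to $v_{-j} \otimes v_{-i}^*$, and consequently $T_{\vec{\ell}}(v_i \otimes v_j^*) = -\zeta^{i-j} v_{-j} \otimes v_{-i}^*$ for a primitive $\ell_\sigma$-th root of unity $\zeta$ (the scalar comes out the same in all three cases defining $c_1$ because $\ell_\sigma$ is odd). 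The involution $(i, j) \mapsto (-j, -i)$ has $\ell_\sigma$ fixed points along $j = -i$, each an eigenvector with eigenvalue $-\zeta^{2i}$ and thus contributing $\prod_i(t + \zeta^{2i}) = t^{\ell_\sigma} + 1$; the remaining $\ell_\sigma(\ell_\sigma - 1)/2$ two-cycles each contribute a $2 \times 2$ anti-diagonal block with characteristic polynomial $t^2 - \zeta^{2(i-j)}$, and grouping them by the value of $k = i - j \bmod \ell_\sigma$---each $k$ arising $(\ell_\sigma - 1)/2$ times---together with the fact that doubling is a bijection on $\Z/\ell_\sigma\Z$ collapses these factors to $(t^{2\ell_\sigma} - 1)^{(\ell_\sigma - 1)/2}$. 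Thus the diagonal contribution is $(t^{\ell_\sigma} + 1)(t^{2 \ell_\sigma} - 1)^{(\ell_\sigma - 1)/2}$, and combining with the off-diagonal contributions and dividing by $(t+1)$ yields the claimed formula.
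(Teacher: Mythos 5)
Your proof is correct. It rests on the same block decomposition of the Lie algebra by the parts of the partition that the paper uses (the paper splits $\sllie_\ell$ as $S_0\oplus\bigoplus S_{\rho\sigma}$ and strips the central factor $\frac{1}{t+1}$ inside $S_0$; you work in $\gllie_\ell$ and divide by $t+1$ at the end — an immaterial difference), but the engine driving each block computation is genuinely different. The paper views each block as a sum of root spaces for $A$, checks that the twisted Weyl element permutes these root spaces freely in orbits of size $2\lcm(\ell_\rho,\ell_\sigma)$ (resp.\ $2\ell_\nu$), and feeds each signed cyclic orbit into Lemma~\ref{lem:veryelement}; this requires the auxiliary observation that the appropriate power of $c_\rho c_\sigma$ acts trivially, and treats the diagonal block in two pieces ($\mathfrak{d}_\nu$ and its complement). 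You instead exploit the fact that every $c_\nu$ is orthogonal, so that $T^2(f)=c_\sigma^2 f c_\rho^{-2}$ on an off-diagonal pair, and read off the answer from the spectra of the $c_\nu$ (using that squaring is a bijection on $\mu_{\ell_\nu}$ since $\ell_\nu$ is odd) together with the block-swap identity $\det(tI-T)=\det\bigl(t^2I-T^2|_{\gllie_{\sigma\rho}}\bigr)$; on the diagonal blocks you diagonalize $c_\sigma$ and use the compatibility of transposition with the symmetric form to get $T(v_i\otimes v_j^*)=-\zeta^{i-j}v_{-j}\otimes v_{-i}^*$, so that the involution $(i,j)\mapsto(-j,-i)$ handles the paper's two sub-cases in one stroke (its fixed locus contributes $t^{\ell_\sigma}+1$, its two-cycles contribute $(t^{2\ell_\sigma}-1)^{(\ell_\sigma-1)/2}$). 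Your route trades the paper's orbit-counting and sign bookkeeping on root vectors for spectral and bilinear-form bookkeeping; both arguments are complete, and all the delicate points in yours (orthogonality in all three cases defining $c_1$, the reindexing so that reciprocal eigenvalue pairs are $(i,-i)$, and the $(\ell_\sigma-1)/2$ two-cycles per difference class $k=i-j$) check out.
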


\begin{proof}
 By identifying the standard ordered basis of $\C^{\ell_\nu}$ with $(e_{\ell'_\nu +1}, e_{\ell'_\nu +2}, \ldots , e_{\ell'_\nu + \ell_\nu - 1}, e_{\ell'_\nu +\ell_\nu}) $ we have an embedding of $\GL_{\ell_\nu}$ into $\GL_{\ell}$.
 Under this identification, we think of $c_\nu$ as being an element of $\GL_{\ell_\nu}$, and hence $\tilde{n}_{\vec{\ell}} \in \prod_{\nu = 1}^{\mu} \GL_{\ell_\nu}$.  Note that the image of $c_\nu$ in the Weyl group of $\GL_{\ell_\nu}$ is a Coxeter element, hence has order $\ell_\nu$ in this Weyl group.

For $1 \leq \rho, \sigma \leq \mu$ we define the block $B_{\rho,\sigma}$ to be the vector subspace of $\sllie_{\ell}$ consisting of matrices $X$ for which $X_{ij} \neq 0$ if and only if  $i \in \{ \ell'_{\rho} + 1, \ell'_{\rho} + 2, \ldots , \ell'_{\rho} + \ell_{\rho} - 1\}$ 
 and $j \in \{  \ell'_{\sigma} + 1, \ell'_{\sigma} + 2, \ldots , \ell'_{\sigma} + \ell_{\sigma} - 1  \}$.   
We let $S_0$ denote the sum of diagonal blocks, that is $S_0 = \sum_{1 \leq \nu \leq \mu} B_{\nu \nu}$.  We identify  $S_0$   with the Lie algebra of the group of determinant one matrices in $\prod_{\nu = 1}^\mu \GL_{\ell_\nu}$. 
For $1 \leq \rho < \sigma \leq \mu$, we define the vector subspace $S_{\rho \sigma}$ of $\sllie_{\ell}$ by $S_{\rho \sigma} = B_{\rho \sigma} + B_{\sigma \rho}$.   If we think of $\GL_{\ell_\rho} \times GL_{\ell_\sigma}$ as a subgroup of $\prod_{\nu = 1}^\mu \GL_{\ell_\nu} \subset \GL_{\ell}$ in the natural way, then  $S_{\rho \sigma}$ is a  $(\GL_{\ell_\rho} \times GL_{\ell_\sigma}) \rtimes \vartheta'$-module where $\vartheta'(X) = -X^T$.

 Treating $\sllie_{\ell}$ as a $(\prod_{\nu = 1}^\mu \GL_{\ell_\nu}) \rtimes \vartheta'$-module, we have
 $$ \sllie_{\ell} = S_{0} \oplus \bigoplus_{1 \leq \rho  < \sigma \leq \mu} S_{\rho \sigma}  .$$
The polynomial $q_{n_{\vec{\ell}}}$ is the product of the characteristic polynomials for the action of $\tilde{n}_{\vec{\ell}} \rtimes \vartheta'$ on each of the $(\mu^2 - \mu)/2 + 1$ summands above.  We now calculate these characteristic polynomials.

The $(\GL_{\ell_{\rho}} \times \GL_{\ell_{\sigma}}) \rtimes \vartheta'$-module $S_{\rho \sigma}$ is also an $A$-module and, as such, may be written as a direct sum of root spaces.  The image of $c_{\rho} c_{\sigma} \rtimes \vartheta'$ in the twisted Weyl group of $(\GL_{\ell_\rho} \times \GL_{\ell_\sigma}) \rtimes \vartheta'$ acts  freely on these roots, with orbits of size $2 \lcm(\ell_\rho,\ell_\sigma)$.   Since the dimension of $S_{\rho \sigma}$ is $ 2 \ell_\rho \ell_\sigma$, there are $\gcd(\ell_\rho,\ell_\sigma)$ of these orbits.   Since the image of $(c_{\rho} c_{\sigma} )^{2\lcm(\ell_\rho,\ell_\sigma)}$ in $A$ is trivial, it follows from Lemma~\ref{lem:veryelement} that
 the characteristic polynomial of $\Ad(c_{\rho} c_{\sigma}) \rtimes \vartheta'$, hence of $\Ad(\tilde{n}_{\vec{\ell}}) \rtimes \vartheta'$, acting on $S_{\rho,\sigma}$ is $(t^{2\lcm(\ell_\rho,\ell_\sigma)} -1)^{\gcd(\ell_\rho,\ell_\sigma)}$.

We now calculate the characteristic polynomial for the action of  $\tilde{n}_{\vec{\ell}} \rtimes \vartheta'$ acting on $S_{0}$.   Since $\tilde{n}_{\vec{\ell}} \rtimes \vartheta'$ fixes the center of  $\gllie_{\ell_\nu}$, this characteristic polynomial 
is equal to $\frac{1}{t+1}$ times the product for $1 \leq \nu \leq \mu$ of the characteristic polynomials for the action of $c_{\nu} \rtimes \vartheta'$ on $\gllie_{\ell_\nu}$.    Since the transpose operation is trivial on $\mathfrak{d}_\nu$, the set of diagonal elements in $\gllie_{\ell_\nu}$, the action of  $c_{\nu} \rtimes \vartheta'$ factors through to an action of $w_\nu \rtimes -1$  where $w_\nu$ is a Coxeter element in $\absW_\nu$, the Weyl group of $\GL_{\ell_\nu}$. Thus the characteristic polynomial for the action $c_{\nu} \rtimes \vartheta'$ on $\mathfrak{d}_\nu$ is $(t^{\ell_\nu} + 1)$.     The image of $c_{\nu} \rtimes \vartheta'$ in $\absW_\nu \rtimes \vartheta'$ acts freely on the roots of $\GL_{\ell_\nu}$ with respect to $A$.  Since the orbits for this action have $2 \ell_\nu$ elements and  the dimension of $\gllie_{2 \ell_\nu} / \mathfrak{d}_\nu$ is $\ell_\nu^2 -\ell_\nu$, there are $(\ell_\nu - 1)/2$ of these orbits.   Since  $c_{\nu}^{2\ell_\nu}$  is trivial, it follows from Lemma~\ref{lem:veryelement} that the characteristic polynomial  for the action $c_{\nu} \rtimes \vartheta'$ on $\gllie_{\ell_\nu}(\C)/\mathfrak{d}_\nu$ 
is $(t^{2\ell_\nu} -1)^{(\ell_\nu -1)/2}$.   Thus,  the characteristic polynomial for the action of  $\tilde{n}_{\vec{\ell}} \rtimes \vartheta'$ acting on $S_{0}$ is
$ \frac{1}{t+1} \prod_{\nu =1}^\mu (t^{\ell_\nu} + 1) \prod_{\tau =1}^\mu (t^{2 \ell_\tau} -1)^{(\ell_\tau -1)/2}$.
\end{proof}

\begin{lemma}  \label{lem:polygivesl}
If $\vec{\ell}$ and $\vec{\ell}'$ are two partitions of $\ell$ with $q_{n_{\vec{\ell}}}= q_{n_{\vec{\ell}'}}$, then $\vec{\ell} = \vec{\ell}'$.
\end{lemma}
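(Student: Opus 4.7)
The plan is to recover the partition $\vec{\ell}$ directly from the polynomial $q_{n_{\vec{\ell}}}(t)$ by extracting the multiplicities of cyclotomic polynomials $\Phi_e(t)$.  Since each part $\ell_\nu$ is odd, so is each $\lcm(\ell_\rho,\ell_\sigma)$, and hence $t^{\ell_\nu}+1 = \prod_{e\mid\ell_\nu}\Phi_{2e}(t)$ and $t^{2m}-1 = \prod_{e\mid m}\Phi_e(t)\Phi_{2e}(t)$ for any odd positive integer $m$.  Thus only the cyclotomic polynomials $\Phi_k(t)$ and $\Phi_{2k}(t)$ for odd $k\ge 1$ appear in $q_{n_{\vec{\ell}}}(t)$.

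For each odd $k\ge 1$, let $m_k$ and $m'_k$ denote the multiplicities of $\Phi_k(t)$ and $\Phi_{2k}(t)$ in $q_{n_{\vec{\ell}}}(t)$, respectively.  A direct accounting shows that the two products $\prod_\tau(t^{2\ell_\tau}-1)^{(\ell_\tau-1)/2}$ and $\prod_{\rho<\sigma}(t^{2\lcm(\ell_\rho,\ell_\sigma)}-1)^{\gcd(\ell_\rho,\ell_\sigma)}$ contribute identically to $m_k$ and $m'_k$, thanks to the symmetric pairing $\Phi_e\Phi_{2e}$ appearing in $t^{2m}-1$ for odd $m$.  The remaining product $\prod_\nu(t^{\ell_\nu}+1)$ contributes $0$ to $m_k$ and $\#\{\nu:k\mid\ell_\nu\}$ to $m'_k$, while the factor $1/(t+1)$ subtracts $1$ from $m'_1$ and nothing else.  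Setting $F(k) := \#\{\nu : k\mid \ell_\nu\}$, I therefore obtain $F(k) = m'_k - m_k$ for odd $k>1$ and $F(1) = m'_1 - m_1 + 1$; in particular, $F$ is determined by $q_{n_{\vec{\ell}}}(t)$ as a function on odd positive integers.

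To conclude, let $n_j(\vec{\ell})$ denote the number of parts of $\vec{\ell}$ equal to $j$.  Then $F(k) = \sum_{j}n_j(\vec{\ell})$, where the sum is over odd positive multiples $j$ of $k$, so M\"obius inversion on the divisibility poset of odd positive integers recovers each $n_j(\vec{\ell})$ from $F$, and hence determines the partition $\vec{\ell}$.  This yields $\vec{\ell}=\vec{\ell}'$ whenever $q_{n_{\vec{\ell}}}=q_{n_{\vec{\ell}'}}$.  The main technical obstacle will be the bookkeeping needed to verify that the contributions from the $(t^{2\ell_\tau}-1)$ and $(t^{2\lcm(\ell_\rho,\ell_\sigma)}-1)$ factors cancel between $m_k$ and $m'_k$; once this cancellation is established, the rest of the argument is routine.
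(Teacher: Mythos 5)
Your proposal is correct, and it takes a genuinely different route from the paper. The paper works with the multiset $S$ of roots of $q_{n_{\vec{\ell}}}$ and runs a greedy peeling algorithm: repeatedly locate a full set of $2h$\textsuperscript{th} roots of unity in $S$ for the largest available odd $h$ and delete one copy of each, arguing that the process terminates after exactly (multiplicity of the root $1$) steps and that the $\ell-1$ leftover roots are precisely the roots of $p_{\vec{\ell}}(t) = \frac{1}{t+1}\prod_\nu (t^{\ell_\nu}+1)$, from which $\vec{\ell}$ is read off. You never isolate $p_{\vec{\ell}}$; instead you exploit the same underlying structural fact --- that for odd $m$ one has $t^{2m}-1=\prod_{e\mid m}\Phi_e\Phi_{2e}$ while $t^m+1=\prod_{e\mid m}\Phi_{2e}$, so the ``symmetric'' factors contribute equally to the multiplicities $m_k$ of $\Phi_k$ and $m'_k$ of $\Phi_{2k}$ --- to get the clean identity $m'_k-m_k=\#\{\nu: k\mid \ell_\nu\}$ (corrected by $1$ at $k=1$ for the $\frac{1}{t+1}$ factor), and then recover the part multiplicities by M\"obius inversion over the odd divisibility poset (all sums are finite since parts are bounded by $\ell$). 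Your bookkeeping checks out: the two ``$t^{2\cdot}-1$'' products do contribute identically to $m_k$ and $m'_k$, and only $\prod_\nu(t^{\ell_\nu}+1)$ and $\frac{1}{t+1}$ break the symmetry. What your approach buys is transparency and rigor --- there is no greedy algorithm whose correctness and termination must be justified, and the paper's assertions (``the number of times this process can happen equals the number of times one appears as a root,'' ``there will be $\ell-1$ roots remaining and these will be the roots of $p_{\vec{\ell}}$'') are exactly the points your multiplicity count makes precise. What the paper's version buys is a concrete algorithm one could run by hand on a given list of eigenvalues without first factoring into cyclotomics.
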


\begin{proof}
Recall that $\vec{\ell}$ is a partition of $\ell$ into odd parts.
List out all the roots of $q_{n_{\vec{\ell}}}$ and call the list $S$. The list $S$ has $\dim(\gg)$ entries. There exists an $m \in \N$ so that every element of $S$ looks like $\xi^\nu_{2m}$  where $\xi_{2m}$ is a fixed primitive $(2m)^\text{th}$ root of unity and $\nu \in \N$.  Choose the largest odd $h \in \N$ for which there exists a primitive $(2h)^{\text{th}}$ root of unity, call it $\xi$, in $S$ with the property that every power of $\xi$ occurs in $S$.  For such a $\xi$ remove one copy of each power of $\xi$ from $S$ -- note, we are only removing $2h$ elements from $S$.  Continue this process until it cannot be repeated -- that is, until the list $S$ consists of roots of unity $\xi$ such that $\xi^j$ is not in $S$ for some $j$. The number of times this process can happen is equal to the number of times one appears as a root of $q_{n_{\vec{\ell}}}$.   

There will be $\ell-1$ roots remaining, and these will be the roots of  
\begin{equation} \label{equ:pl}
p_{\vec{\ell}}(t) := \frac{1}{t+1} \prod_{\nu = 1}^\mu (t^{\ell_\nu} + 1).
\end{equation}
That is, this algorithm recovers $p_{\vec{\ell}}$ from $q_{n_{\vec{\ell}}}$, and hence it recovers $\vec{\ell}$.
\end{proof}

By construction, we have $q_{n_{\vec{\ell}}} =  q_{xn_{\vec{\ell}}\vartheta(x\inv) }$  for all $x \in G$.  Since any two lifts of a $\vartheta$-elliptic element in $\absW$ to elements in $N_G(A) \rtimes \vartheta$ are $\vartheta$-conjugate by an element of $A$, we conclude from Lemma~\ref{lem:polygivesl} that two $\vartheta$-elliptic elements of $\absW$ are $\vartheta$-conjugate in $\absW$ if and only if their lifts into $N_G(A)$ are $\vartheta$-conjugate in $\SL_{\ell}$.

\subsection{Injectivity of \texorpdfstring{$\psi_G$ for type $\lsup{2}D_{\ell +1}$ with $\ell \geq 2$}{inject 2D}} 
 \label{sec:inj2Dlplusone} 
Let $G = \SO_{2\ell +2}$ and let $\gg$ be  $\solie_{2 \ell+2}$  realized as in Section~\ref{sec:injDl}.  We take $A$ to be the diagonal matrices in $G$ and $B$ to be the upper triangular matrices in $G$.  We realize $V$, $\Phi$, and $\Delta$ as in Section~\ref{sec:injDl}.  In particular, the elements of $\Delta$ are
$$\alpha_1 = e_1 - e_2,  \,  \alpha_2 = e_2 - e_3,  \, \ldots , \, \alpha_{\ell-1} = e_{\ell-1} - e_\ell, \alpha_{\ell} = e_{\ell} - e_{\ell+1}, \, \alpha_{\ell+1} = e_{\ell} + e_{\ell+1}.$$

Let $J$ be the $(2 \ell +2) \times (2 \ell+2)$ matrix such that $J_{ii} = 1$ for $i \neq \ell+1, \ell+2$, $J_{\ell+1,\ell+2} = J_{\ell+2,\ell+1} = 1$, and all other entries are zero.  Then $J \in \On_{2 \ell+2}$.   We define an involution $\vartheta$ on $\gg$ by $\vartheta(X) = JXJ$.

For $1 \leq k \leq \ell$ we define $s_k \in \SO_{2 \ell +2}$ to be the $(2 \ell +2 ) \times (2 \ell +2)$ matrix such that 
$$(s_k)_{ij} = 
\begin{cases}
    1 &\qquad \text{$i=j$ and $i \not\in \{k,k+1,2\ell -k+3, 2\ell-k +2 \}$}\\
    1 &\qquad \text{$i \neq j$ and  $i,j \in \{k,k+1\}$}\\
     1 &\qquad \text{$i \neq j$ and  $i,j \in \{2 \ell - k+2, 2 \ell - k+3\}$}  \\
     0 &\qquad \text{otherwise.}
\end{cases}$$
The image of the order two element  $s_k$ in $\absW$ is the simple reflection corresponding to $\alpha_k$.    For $1 \leq k \leq \ell+1$ define $t_k \in \On_{2 \ell+2}$ by 
$$t_k = s_k \cdot s_{k+1} \cdot s_{k+2} \cdot \cdots \cdot s_{\ell-2} \cdot s_{\ell -1} \cdot s_{\ell} \cdot J \cdot s_{\ell} \cdot s_{\ell-1} \cdot s_{\ell-2} \cdot  \cdots  \cdot s_{k+1} \cdot s_k.$$
Note that $t_{\ell+1} = J$.

The $\vartheta$-elliptic conjugacy classes in $\absW$ correspond to partitions  $\vec{\ell} = (\ell_\mu \geq \ell_{\mu-1} \geq \cdots \geq \ell_1)$  of $\ell+1$ where $\mu$ is odd~\cite[7.20]{he:minimal}.  Fix such a partition.   Recall that for $1 \leq \nu \leq \mu$ we define $\ell'_\nu$ = $\ell_1 + \ell_2 + \cdots + \ell_{\nu - 1}$.   We define  $\tilde{n}_{\vec{\ell}}$ to be the product $c_1 \cdot c_2 \cdot \cdots \cdot c_\mu$ where
$c_{\nu} = s_{\ell'_{\nu} + 1}\cdot s_{\ell'_{\nu} + 2} \cdot \cdots \cdot s_{\ell'_{\nu} + \ell_{\nu}-1} \cdot t_{\ell'_{\nu} + \ell_{\nu}}$.
Thus
$$\tilde{n}_{\vec{\ell}} = \prod_{\nu = 1}^{\mu}(s_{\ell'_{\nu} + 1}\cdot s_{\ell'_{\nu} + 2} \cdot \cdots \cdot s_{\ell'_{\nu} + \ell_{\nu}-1} \cdot t_{\ell'_{\nu} + \ell_{\nu}}).$$
Let $\dorbit_{\vec{\ell}}$ denote the $\vartheta$-conjugacy class  in $\absW$ of the image of ${n}_{\vec{\ell}} = \tilde{n}_{\vec{\ell}}J$ in  $\absW$.   The correspondence $\vec{\ell} \leftrightarrow \dorbit_{\vec{\ell}}$ defines a bijective correspondence between partitions of $\ell+1$ with an odd number of parts and $\vartheta$-elliptic $\vartheta$-conjugacy classes in $\absW$.

\begin{remark}   \label{rem:regell2Dl}
Since $w \in \absW$ is regular $\vartheta$-elliptic if and only if $\hh^{w \rtimes \vartheta} = \{0\}$ and $\langle w \rtimes \vartheta \rangle$ acts freely on $\Phi$, the regular $\vartheta$-elliptic elements of $\absW$ correspond to partitions of the form $(d,d, \ldots , d)$ where $d \in \Z_{\geq 1}$ divides $\ell +1$ and $\frac{\ell + 1}{d}$ is odd or $(e,e, \ldots , e,e,1)$ where $e \in \Z_{\geq 1}$ divides $\ell$ and $\frac{\ell}{e}$ is even. 
\end{remark}

Any element $g \in \SO_{2\ell+2}$ acts on $\C^{2\ell+2}$ by matrix multiplication.  The characteristic polynomial, call it $q_{g}$, of $gJ$ with respect to this action is an $\SO_{2\ell+2}$-conjugation invariant polynomial, hence $q_{g} = q_{x g \vartheta(x\inv)}$ for all $g, x \in \SO_{2 \ell+2}$.  
Using Lemma~\ref{lem:veryelement}, one finds that 
$$q_{n_{\vec{\ell}}}(t) = \prod_{\nu = 1}^\mu (t^{2 \ell_\nu} - 1).$$
 If $\vec{\ell}$ and $\vec{\ell}'$ are two partitions of $\ell$ for which the associated elements $n_{\vec{\ell}}$
and $n_{\vec{\ell}'}$ of $N_G(A)$ are $\vartheta$-conjugate in $G$, then we have $q_{n_{\vec{\ell}}} = q_{n_{\vec{\ell}'}}$.  The only way this can happen is if $\vec{\ell} = \vec{\ell}'$.

Since any two lifts of an elliptic element in $\absW \rtimes \vartheta$ into $N_G(A) \rtimes \vartheta$ are $A$-conjugate,  we conclude that two $\vartheta$-elliptic elements of $\absW$ are $\vartheta$-conjugate in $\absW$ if and only if their lifts into $N_G(A)$ are $\vartheta$-conjugate in $\SO_{2 \ell+2}$.

\subsection{Injectivity of \texorpdfstring{$\psi_G$ for the remaining types of groups}{inject rest}} 
\label{sec:injexceptional}

Suppose $G$ is a simple adjoint group of type $F_4$, $G_2$, $E_6$, $E_7$, $E_8$, $\lsup{3}D_4$, or $\lsup{2}E_6$.  An inspection of the tables in~\cite[Section~9]{adams-he-nie:from}  shows that the map $\zeta$ is injective.   Since $G$ is adjoint, we know $\chi_G$ is bijective.   Since $\zeta = \chi_G \circ \psi_G$, we conclude that $\psi_G$ must be injective.  

From Lemma~\ref{lem:isogenyindependent} we conclude that $\psi_G$ must be injective independent of the isogeny type of $G$.  In particular, $\psi_G$ is injective when $G$ is an almost-simple  group of type $F_4$, $G_2$, $E_6$, $E_7$, $E_8$, $\lsup{3}D_4$, or $\lsup{2}E_6$.

\section{Kac diagrams for \texorpdfstring{$\vartheta$-elliptic conjugacy classes in $\absW$}{elliptic diagrams}}  \label{sec:kacdiagrams} 

In this section $\gg$ is a simple complex Lie algebra and $G$ is a semisimple complex group with Lie algebra $\gg$.  Suppose  the pinned automorphism $\vartheta$ has order $f$.

\subsection{Notation for Kac diagrams}  We begin by recalling the setup of~\cite{reeder:torsion} for Kac diagrams.

Let $(\check{\omega}_\alpha)_{\alpha \in \Delta} \subset V = \X_*(A) \otimes \R$ be the fundamental coweights with respect to $\Delta$.  That is, $\check{\omega}_\alpha(\beta) = \delta_{\alpha \beta}$ for $\alpha, \beta \in \Delta$.   Here $\delta_{\alpha \beta} = 1$ if $\alpha = \beta$ and is zero otherwise.

For $\alpha \in \Phi = \Phi(G,A)$, define $\bar{\alpha} = \res_{V^\vartheta} \alpha$ and set
$\gamma_\alpha =  \sum \bar{\beta}$ where the sum is over those $\beta  \in \Phi $ such that $\bar{\beta} \in \R_{>0} \bar{\alpha}$.   The set $\Phi_\vartheta := \{\gamma_\alpha \, | \, \alpha \in \Phi \}$ is the reduced root system for $G^\vartheta$ with respect to $A^\vartheta$.  The set $\Delta_\vartheta = \{ \gamma_\alpha  \, | \, \alpha \in \Delta\}$ is a basis for $\Phi_\vartheta$ with respect to $G^\vartheta$, $B^{\vartheta}$, and $A^\vartheta$.    If $\vartheta$ is nontrivial, then $\dabs{\Delta_\vartheta} < \dabs{\Delta}$.

The Weyl group  $N_{G^\vartheta}(A)/A^\vartheta$ is $\absW^\vartheta$, and $V^\vartheta$ is the reflection representation for $\absW^\vartheta$.    As before we have an affine simplicial decomposition of $V^\vartheta$, and the set of simple roots $\Delta_\vartheta$ determines a unique alcove $C_\vartheta$ that contains the origin of $V^\vartheta$.   Note that $C_\vartheta \cap C^\vartheta \neq \emptyset$, and it can happen that $C_{\vartheta}$ is not $C^\vartheta$. 

\subsubsection{The constants \texorpdfstring{$b_\gamma$ and $c_\gamma$}{constants b and c}}

 Let $\tilde{\gamma}_0$ denote the highest root in $\Phi_\vartheta$ with respect to $\Delta_{\vartheta}$ and write $\tilde{\gamma}_0 = \sum_{\gamma \in \Delta_{\vartheta}} c_{\gamma} \gamma$ for $c_\gamma \in \Z_{>0}$. 
The constants $c_\gamma$ may be found in~\cite[Planche I -- IX]{bourbaki:lie4to6}.

For $\gamma \in \Delta_{\vartheta}$ choose $\alpha \in \Delta$ such that $\gamma = \gamma_\alpha$.  Let $\dabs{\gamma}_\vartheta$ denote the size of the $\langle \vartheta \rangle$-orbit of $\alpha$ and set
$$\check{\mu}_{\gamma} = \frac{1}{f} \sum_{j=0}^{f -1} \vartheta^j \check{\omega}_\alpha.$$  
For $\gamma \in \Delta_{\vartheta}$ we define $b_{\gamma} \in \Z_{>0}$ by requiring
$$\check{\mu}_{\gamma} (\rho) = \frac{b_{\gamma} f}{c_\gamma \dabs{\gamma}_{\vartheta}} \delta_{\gamma \rho}$$
for all $\rho \in \Delta_{\vartheta}$.    If $\vartheta$ is trivial, then $\Delta = \Delta_\vartheta$, $f = 1$, $\dabs{\gamma}_\vartheta = 1$, and $\check{\mu}_\gamma = \check{\omega}_\gamma$ for all $\gamma \in \Delta_\vartheta = \Delta$.  It follows that  $b_\gamma = c_\gamma$ for all $\gamma \in \Delta$.  If $\vartheta$ is not trivial, then  the constants $b_\gamma$ may be found in, for example,~\cite[Table~2]{reeder:torsion}.  Whenever we need to know the values of $b_\gamma$, we will provide them.  

  If we define $\gamma_0 = 1 - \tilde{\gamma}_0$, then $\tilde{\Delta}_\vartheta = \Delta \cup \{\gamma_0\}$ is a basis with respect to $C_{\vartheta}$ for the affine roots determined by $G^\vartheta$ and $A^\vartheta$.  We set $c_{\gamma_0} = b_{\gamma_0} = 1$.

\subsubsection{Kac diagrams}
As discussed in the introduction, if $g \in G$ has order $m$, then $g$ determines a point, call it $x_g$, in the closure of $C_\vartheta$.  A Kac diagram is a labeling of the affine Dynkin diagram that encodes the location of this point.  We explain how this works.

Let $(v_{\gamma} \, | \, \gamma \in \tilde{\Delta}_\vartheta)$ denote the vertices of $C_{\vartheta}$.   We let $v_{\gamma_0}$ be the origin and then 
$$v_\gamma =\dfrac{\dabs{\gamma}_\vartheta}{b_\gamma f} \cdot \check{\mu}_\gamma $$
for $\gamma \in \Delta_{\vartheta}$.   Every element $x$ in the closure  of $C_\vartheta$ may be written in barycentric coordinates as 
$$x = \sum_{\gamma \in \tilde{\Delta}_\vartheta} x_\gamma v_\gamma$$
where $\sum x_\gamma = 1$ and $x_\gamma \geq 0$ for all $\gamma$.  From, for example~\cite[Theorem 3.7]{reeder:torsion}, these conditions imply the existence of $s_\gamma \in \Z_{\geq 0}$ such that 
\begin{equation} \label{equ:equationxg}
x_g =  \frac{f}{m} \sum_{\gamma \in \tilde{\Delta}_\vartheta} s_\gamma b_\gamma \cdot v_\gamma =  \frac{f}{m} \sum_{\gamma \in {\Delta}_\vartheta} s_\gamma b_\gamma \cdot v_\gamma  =  \frac{1}{m} \sum_{\gamma \in {\Delta}_\vartheta} s_\gamma \dabs{\gamma}_{\vartheta} \cdot \check{\mu}_{\gamma}.
\end{equation}
We will also have
\begin{equation} \label{equ:equationform}
m = f \sum_{\gamma \in \tilde{\Delta}_\vartheta} s_\gamma b_\gamma.
\end{equation}

The list $(s_\gamma \, | \, \gamma \in \tilde{\Delta}_\vartheta)$ determines a Kac diagram attached to $g$ as follows.  First, remove any factors that are common to all of the $s_\gamma$ so that we may assume that the  $s_\gamma$ are relatively prime.  Then label  the node in the affine Dynkin diagram  corresponding to $\gamma \in \tilde{\Delta}_\vartheta$  with the corresponding non-negative integer $s_\gamma$.  

\begin{remark}  Suppose we are given a Kac diagram for $G^\vartheta$ with labels $(s_\gamma \, | \, \gamma \in \tilde{\Delta}_\vartheta)$.  If we  \emph{define} $m$ using Equation~\ref{equ:equationform} and these $s_\gamma$, then Equation~\ref{equ:equationxg} determines a point in $C_\vartheta$.
\end{remark}

\begin{remark} If $G$ is adjoint, then the $s_\gamma$ are automatically relatively prime and constructing an element $g$ from a given Kac diagram is straightforward.  However, if $G$ is not adjoint, then the condition that the labels of the Kac diagram be relatively prime is often too restrictive -- there may not even be an element of order $m =f \sum s_\gamma b_\gamma$ in $G$.   In this case one can first construct an element in the adjoint form of $G$ and then lift this element into $G$.
\end{remark}

\begin{remark}  \label{rem:decreasing}
In this paper we restrict our attention to finite order elements of $G$ that normalize $A$ and have $\vartheta$-elliptic image in $\absW$.
Let $n$ be such an element, and assume it has order $m$.  Let $\xi$ be a primitive $m^{th}$ root of unity. In this case, there exists $\lambda \in \X_*(A^{\vartheta})$ such that $n$ is $\vartheta$-conjugate to $\lambda(\xi)$ and the point $x_n \in C_\vartheta$ is equal to $\lambda/m$  (see, for example,~\cite[Lemma~3.5.1]{debacker:totally}).  From Equation~(\ref{equ:equationxg}) we must have $\lambda = \sum_{\gamma \in {\Delta}_\vartheta} s_\gamma \dabs{\gamma}_{\vartheta} \cdot \check{\mu}_{\gamma}$ with $s_\gamma \dabs{\gamma}_{\vartheta} \geq 0$.  In terms of the explicit realizations of $G$ we have adopted, this means that this $\lambda$ must look like 
$$\lambda(t) = \Diag (t^{a_1}, t^{a_2}, \ldots )$$
for $a_i \in \Z$ with $a_1 \geq a_2 \geq \cdots$.
\end{remark}

Remark~\ref{rem:decreasing} will be used repeatedly in Sections~\ref{sec:kacdiagramAl} --~\ref{sec:kacdiagram2Dlpo}.

\subsection{How to create the Kac diagram for the elliptic conjugacy class in \texorpdfstring{$\absW$ for $A_{\ell-1}$ with $\ell \geq 2$}{kac for A}}
\label{sec:kacdiagramAl}
We adopt the notation of Section~\ref{sec:injAlminusone}. 
As discussed earlier, there is only one elliptic conjugacy class in the Weyl group.  We will show how to construct the Kac diagram associated to this elliptic conjugacy class.  We adopt the notation of Section~\ref{sec:injAlminusone}.

Since $\vartheta$ is trivial, we have $f = 1$.  We take the simple roots $\gamma$ in $\Delta_\vartheta = \Delta$ to be the roots $\gamma_k = \alpha_k$ for $1 \leq k < \ell$.     For coweights we take $\check{\mu}_i = e_1 + e_2 + \cdots + e_i - \frac{i}{\ell} (e_1 + e_2 + \cdots + e_\ell)$ for $1 \leq i < \ell$.

The affine Dynkin diagram is 
$$
\begin{tikzpicture}[start chain]
\unodenj{0}
\unodenj{1}
\path (chain-1) -- node[anchor=mid] {\(\Longleftrightarrow\)} (chain-2);
\end{tikzpicture}
$$
for $\ell = 2$, and it is 
$$
\begin{tikzpicture}[start chain,node distance=1ex and 2em]
\unode{1}
\unode{2}
\dydots
\unode{\ell-2}
\unode{\ell-1}
\begin{scope}[start chain=br going below]
\chainin(chain-3);
\node[ch,join=with chain-1,join=with chain-5,label={below:\(\gamma_0\)}] {};
\end{scope}
\end{tikzpicture}
$$
for $\ell > 2$.   We have $ b_k = c_k = 1$ for $0 \leq k < \ell$, so the $b_\gamma$ are given by the diagram
$$
\begin{tikzpicture}[start chain]
\enodenj{1}
\enodenj{1}
\path (chain-1) -- node[anchor=mid] {\(\Longleftrightarrow\)} (chain-2);
\end{tikzpicture}
$$
for $\ell = 2$, and by the diagram
$$
\begin{tikzpicture}[start chain,node distance=1ex and 2em]
\enode{1}
\enode{1}
\dydots
\enode{1}
\enode{1}
\begin{scope}[start chain=br going below]
\chainin(chain-3);
\node[noch,join=with chain-1,join=with chain-5,label={center:\(1\)}] {};
\end{scope}
\end{tikzpicture}
$$
for $\ell > 2$.

The elliptic-conjugacy class in $\absW$ corresponds to the partition $(\ell)$ of $\ell$, and we constructed a representative $n_{(\ell)}$ for this conjugacy class in Section~\ref{sec:injAlminusone}. 
 The characteristic polynomial of $n_{({\ell})}$ with respect to its natural action on $\C^\ell$ is
$ q_{n_{(\ell)}}(t) = t^{\ell} + (-1)^{\ell}$.   We need to consider two cases.

\subsubsection{\texorpdfstring{$\ell$ is even}{ell is even}}
Since  $\ell$ is even we have  $ q_{n_{(\ell)}}(t) = t^{\ell} + 1$.
Let $m = 2 \ell$ and let $\xi$ be a primitive $m^{\text{th}}$ root of unity. We have
$$q_{n_{(\ell)}}(t) = \prod_{a = 1}^{\ell/2} (t - \xi^{2 a - 1 })(t - \xi^{-(2 a - 1) }).$$
Guided by  Remark~\ref{rem:decreasing} we create the length $\ell$ decreasing list $(\ell - 1,  \ell - 3, \ldots, 3,1,-1,-3, \ldots  , -(\ell-1)    )$. Denote by $\scoeff_i$ the $i^{\text{th}}$ item in this list to obtain a decreasing list $(\scoeff_1 \geq \scoeff_2 \geq \scoeff_3 \geq \cdots \geq \scoeff_{\ell-1} \geq \scoeff_\ell)$.  The element $d_{(\ell)} = \Diag(\xi^{\scoeff_1},  \xi^{\scoeff_2}, \ldots , \xi^{\scoeff_{\ell -1}}, \xi^{\scoeff_{\ell}} )$ in $\SL_{\ell}$ has characteristic polynomial $q_{n_{(\ell)}}$  for its standard action on  $\C^{\ell}$. 

 Since the linear factors $(t-\xi^{\scoeff_j})$ for $1 \leq j \leq \ell$ must occur in $q_{n_{(\ell)}}$,   we conclude that, up to the action of $\absW$, $d_{(\ell)}$ is the unique element of $A$ that is $G$-conjugate to $n_{(\ell)}$.

We can now read off the Kac diagram for $n_{(\ell)}$ from $d_{(\ell)}$. 
Note that $d_{(\ell)} = \lambda_{({\ell})}(\xi)$ where 
$$\lambda_{({\ell})} =  2 \check{\mu}_1 + 2 \check{\mu}_2  +  \ldots  + 2 \check{\mu}_{\ell-1}.$$
Since  $\dabs{\gamma}_\vartheta$ is $1$ for all $\gamma \in \Delta_\vartheta$ and 
$$\lambda_{({\ell})}/m = 
\frac{1}{m} [ 2 \check{\mu}_1 + 2 \check{\mu}_2  +  \ldots  + 2 \check{\mu}_{\ell-1}],$$
from Equation~\ref{equ:equationxg} we conclude that the $s_\gamma$ are all $2$ for $\gamma \in \Delta$.  In order to satisfy Equation~\ref{equ:equationform} we must also have $s_{\gamma_0} = 2$. Dividing all terms by $2$, we  have:

\begin{lemma}
The Kac diagram for the elliptic conjugacy class in a group of type $A_{\ell-1}$ with $\ell \geq 2$ even is the same as the diagram for $b_\gamma$ above.  
\end{lemma}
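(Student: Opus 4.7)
My plan is to read the Kac labels directly off the diagonal representative constructed in the preceding paragraph: the problem reduces to expanding $\lambda_{(\ell)}$ in the coweight basis and invoking Equations~\ref{equ:equationxg} and~\ref{equ:equationform}, so the argument is a short computation organized in three steps.

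First, because $\ell$ is even, $q_{n_{(\ell)}}(t) = t^\ell + 1$, so I take $m = 2\ell$ and fix a primitive $m$-th root of unity $\xi$. Any element of $A$ that is $G$-conjugate to $n_{(\ell)}$ must then have the multiset of eigenvalues $\{\xi^{\pm 1}, \xi^{\pm 3}, \ldots, \xi^{\pm(\ell-1)}\}$. Arranging the exponents in the unique weakly decreasing order $(\ell-1, \ell-3, \ldots, 3, 1, -1, \ldots, -(\ell-1))$ singled out by Remark~\ref{rem:decreasing} produces the diagonal matrix $d_{(\ell)}$ of that paragraph and locates the corresponding $\lambda_{(\ell)}/m$ inside the closure of $C_\vartheta$ rather than in some other $\absW$-translate.

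Second, I would verify the identity $d_{(\ell)} = \lambda_{(\ell)}(\xi)$ with
$$\lambda_{(\ell)} = 2 \check{\mu}_1 + 2 \check{\mu}_2 + \cdots + 2 \check{\mu}_{\ell-1}.$$
This is a direct telescoping calculation from $\check{\mu}_i = e_1 + \cdots + e_i - (i/\ell)(e_1 + \cdots + e_\ell)$: summing yields that the $j$-th diagonal exponent of $\lambda_{(\ell)}(\xi)$ equals $\ell - (2j-1)$, which is precisely the $j$-th entry in the decreasing list above.

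Third, since $\vartheta$ is trivial we have $f = 1$ and $\dabs{\gamma}_\vartheta = 1$ for every $\gamma \in \Delta_\vartheta$, so Equation~\ref{equ:equationxg} forces $s_\gamma = 2$ for each $\gamma \in \Delta$. Equation~\ref{equ:equationform}, combined with $b_\gamma = 1$ throughout $\tilde{\Delta}_\vartheta$, then gives $s_{\gamma_0} = m - 2(\ell-1) = 2$. All $s_\gamma$ being equal to $2$, I divide by the common factor $2$ and obtain the all-ones labeling, which is exactly the diagram displayed for $b_\gamma$. The only step that might be called an obstacle is the telescoping identity for $\lambda_{(\ell)}$, but it is immediate from the standard-basis expression for $\check{\mu}_i$; everything else is a bookkeeping consequence of Equations~\ref{equ:equationxg} and~\ref{equ:equationform}.
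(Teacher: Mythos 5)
Your proposal is correct and follows essentially the same route as the paper: construct $d_{(\ell)}=\lambda_{(\ell)}(\xi)$ with $\lambda_{(\ell)}=2\check{\mu}_1+\cdots+2\check{\mu}_{\ell-1}$ from the eigenvalue multiset of $t^\ell+1$, then read off $s_\gamma=2$ for all $\gamma\in\tilde{\Delta}_\vartheta$ from Equations~\ref{equ:equationxg} and~\ref{equ:equationform} and divide by the common factor. Your explicit telescoping check that the $j$-th exponent of $\lambda_{(\ell)}(\xi)$ is $\ell-(2j-1)$ is a correct (and slightly more detailed) verification of the identity the paper states without computation.
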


\begin{example}        
In Figure~ \ref{fig:A1location} for groups of type $A_1$ we show the location of the vertices $v_0$ and $v_1$ of the fundamental alcove as well as the point determined by $\lambda_{(2)}/4$. The Kac diagram for $n_{(2)}$ is 
\(
\begin{tikzpicture}[start chain]
\enodenj{1}
\enodenj{1}
\path (chain-1) -- node[anchor=mid] {\(\Longleftrightarrow\)} (chain-2);
\end{tikzpicture}.
\)
\begin{figure}[ht]
\centering
\begin{tikzpicture}

\draw (-0.1,0)   -- (5.296,0);
\draw[black,fill=white] (0,0) circle (.5ex);
\draw(0.2,-0.6) node[anchor=south]{$v_0$};

\draw[black,fill=white] (5.196,0) circle (.5ex);
\draw(5.396,-0.6) node[anchor=south]{$v_1$};

\draw(2.981,0) node[anchor=north]{$\frac{\lambda_{(2)}}{4}$};
\draw[black,fill=black] (2.598,0) circle (.5ex);

\end{tikzpicture}
\caption{The location of the point determined by  $\lambda_{(2)}/4$ for groups of type $A_1$ \label{fig:A1location}}
\end{figure}
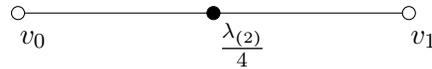
\end{example}

\subsubsection{\texorpdfstring{$\ell$ is odd}{ell is odd}}
Since  $\ell$ is odd we have  $ q_{n_{(\ell)}}(t) = t^{\ell} - 1$.
Let $m = \ell$ and let $\xi$ be a primitive $m^{\text{th}}$ root of unity. We have
$$q_{n_{(\ell)}}(t) = (t-1) \prod_{a = 0}^{(\ell-1)/2} (t - \xi^{a})(t - \xi^{-a}).$$
Guided by  Remark~\ref{rem:decreasing} we create the length $\ell$ decreasing list $((\ell - 1)/2,  (\ell - 3)/2, \ldots, 1,0,-1, \ldots ,  -(\ell -1)/2   )$.    Denote the  $i^{\text{th}}$ item in this list by $\scoeff_i$ to obtain a decreasing list $(\scoeff_1 \geq \scoeff_2 \geq \scoeff_3 \geq \cdots \geq \scoeff_{\ell-1} \geq \scoeff_\ell)$.  The element $d_{(\ell)} = \Diag(\xi^{\scoeff_1},  \xi^{\scoeff_2}, \ldots , \xi^{\scoeff_{\ell -1}}, \xi^{\scoeff_{\ell}} )$ in $\SL_{\ell}$ has characteristic polynomial $q_{n_{(\ell)}}$  for its standard action on  $\C^{\ell}$. 

 Since the linear factors $(t-\xi^{\scoeff_j})$ for $1 \leq j \leq \ell$ must occur in $q_{n_{(\ell)}}$,   we conclude that, up to the action of $\absW$, $d_{(\ell)}$ is the unique element of $A$ that is $G$-conjugate to $n_{(\ell)}$.

We can now read off the Kac diagram for $n_{(\ell)}$ from $d_{(\ell)}$. 
Note that $d_{(\ell)} = \lambda_{({\ell})}(\xi)$ where 
$$\lambda_{({\ell})} =   \check{\mu}_1 +  \check{\mu}_2  +  \ldots  +  \check{\mu}_{\ell-1}.$$
Since  $\dabs{\gamma}_\vartheta$ is $1$ for all $\gamma \in \Delta_\vartheta$ and 
$$\lambda_{({\ell})}/m = 
\frac{1}{m} [  \check{\mu}_1 +  \check{\mu}_2  +  \ldots  + \check{\mu}_{\ell-1}],$$
from Equation~\ref{equ:equationxg} we conclude that the $s_\gamma$ are all $1$ for $\gamma \in \Delta$.  In order to satisfy Equation~\ref{equ:equationform} we must also have $s_{\gamma_0} = 1$. Thus, we have:

\begin{lemma}
The Kac diagram for the elliptic conjugacy class in a group of type $A_{\ell-1}$ with $\ell \geq 3$ odd is the same as the diagram for $b_\gamma$ above.  
\end{lemma}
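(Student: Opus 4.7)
The plan is to mirror the template just carried out for $\ell$ even. First I would compute the characteristic polynomial of $n_{(\ell)}$ in its natural action on $\C^{\ell}$. The formula $q_{n_{(\ell)}}(t) = t^{\ell} + (-1)^{\ell}$ established in Section~\ref{sec:injAlminusone} collapses to $t^{\ell} - 1$ for odd $\ell$, and its roots are precisely the $\ell^{\text{th}}$ roots of unity. Setting $m = \ell$ and fixing a primitive $m^{\text{th}}$ root of unity $\xi$, I would list these eigenvalues as $\xi^{\scoeff_i}$ with exponents arranged in weakly decreasing order, namely $(\scoeff_1,\ldots,\scoeff_\ell) = ((\ell-1)/2, (\ell-3)/2, \ldots, 1, 0, -1, \ldots, -(\ell-1)/2)$, and assemble the corresponding diagonal element $d_{(\ell)} = \Diag(\xi^{\scoeff_1}, \ldots, \xi^{\scoeff_\ell}) \in A$.

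The next step is to argue that $d_{(\ell)}$ is the unique (up to $\absW$) diagonal representative of the $G$-conjugacy class of $n_{(\ell)}$. For this, I would invoke Remark~\ref{rem:decreasing}: because $n_{(\ell)}$ normalizes $A$, has finite order, and has elliptic image in $\absW$, it is $G$-conjugate to $\lambda(\xi)$ for some $\lambda \in \X_*(A^\vartheta)$ whose exponents in our matrix realization appear in weakly decreasing order. The linear factors $(t - \xi^{\scoeff_j})$ must exhaust the factorization $t^\ell - 1$, so the multiset of exponents, and hence $d_{(\ell)}$ itself, is forced.

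Finally I would convert $d_{(\ell)}$ back into coweight coordinates and apply the Kac-diagram machinery. Using $\check{\mu}_i = e_1 + \cdots + e_i - (i/\ell)(e_1 + \cdots + e_\ell)$, a direct coefficient count shows $d_{(\ell)} = \lambda_{(\ell)}(\xi)$ with
\[
\lambda_{(\ell)} = \check{\mu}_1 + \check{\mu}_2 + \cdots + \check{\mu}_{\ell-1},
\]
since the coefficient of $e_j$ on the right works out to $(\ell+1)/2 - j$ for $1 \leq j \leq \ell$, producing exactly the decreasing exponent list above. Feeding $\lambda_{(\ell)}/m$ into Equation~\ref{equ:equationxg}, and using $\dabs{\gamma}_{\vartheta} = 1$ and $b_\gamma = 1$ throughout, forces $s_\gamma = 1$ for every $\gamma \in \Delta_\vartheta$. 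Equation~\ref{equ:equationform} with $f = 1$ and $m = \ell$ then requires $\sum_{\gamma \in \tilde{\Delta}_\vartheta} s_\gamma = \ell$, so the $\ell - 1$ interior labels of $1$ force $s_{\gamma_0} = 1$ as well. This reproduces the $b_\gamma$ diagram displayed above.

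The conceptual content is light; the real care is arithmetic bookkeeping. Verifying that $\sum_{i=1}^{\ell-1} \check{\mu}_i$ yields the symmetric decreasing list of exponents requires a clean coefficient computation, and one must track the differences from the even case, where the step between consecutive exponents was $2$ and $\lambda_{(\ell)}$ picked up an extra factor of $2$ that later had to be divided out. In the odd case no common factor appears, so the $s_\gamma$ read directly off of $\lambda_{(\ell)}$ already give the final Kac diagram, making this the cleanest of the two parity cases.
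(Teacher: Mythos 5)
Your proposal is correct and follows essentially the same route as the paper: compute $q_{n_{(\ell)}}(t)=t^{\ell}-1$, arrange the exponents of the eigenvalues in decreasing order per Remark~\ref{rem:decreasing} to pin down $d_{(\ell)}$ uniquely up to $\absW$, identify $d_{(\ell)}=\lambda_{(\ell)}(\xi)$ with $\lambda_{(\ell)}=\check{\mu}_1+\cdots+\check{\mu}_{\ell-1}$, and read off $s_\gamma=1$ for all nodes from Equations~\ref{equ:equationxg} and~\ref{equ:equationform}. Your coefficient check that $e_j$ carries coefficient $(\ell+1)/2-j$ is a nice explicit verification the paper leaves implicit, and your closing observation about the absence of the common factor of $2$ from the even case is accurate.
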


\begin{example}  
In Figure~ \ref{fig:A2location} for groups of type $A_2$ we show the location of the vertices $v_0$, $v_1$, and $v_2$ of the fundamental alcove as well as the point determined by $\lambda_{(3)}/3$. The Kac diagram for $n_{(3)}$ is
\(
\begin{tikzpicture}[start chain,node distance=1ex and 2em]
\enode{1}
\enode{1}
\begin{scope}[start chain=br going below]
\chainin(chain-2);
\node[noch,join=with chain-1,join=with chain-2,label={center:\(1\)}] at (.5,-0.1) {};
\end{scope}
\end{tikzpicture}
\)
\begin{figure}[ht]
\centering
\begin{tikzpicture}

\draw (0,0)   -- (2.5,4.33) --  (5,0) -- cycle;
\draw[black,fill=white] (0,0) circle (.5ex);
\draw(0.2,-0.5) node[anchor=south]{$v_0$};

\draw[black,fill=white] (5,0) circle (.5ex);
\draw(5.196,-0.5) node[anchor=south]{$v_1$};

\draw[black,fill=white] (2.5,4.33) circle (.5ex);
\draw(2.5,4.33) node[anchor=south]{$v_2$};

\draw(2.85,1.44) node[anchor=north]{$\frac{\lambda_{(3)}}{3}$};
\draw[black,fill=black] (2.5,1.44) circle (.5ex);

\end{tikzpicture}
\caption{The location of the point determined by  $\lambda_{(3)}/3$ for groups of type $A_2$ \label{fig:A2location}}
\end{figure}
\end{example}

\subsection{How to create a Kac diagram for \texorpdfstring{$n_{\vec{\ell}}$ for $B_\ell$ with $\ell \geq 3$}{kac B}}  
We adopt the notation of Section~\ref{sec:injBl}.  Fix a partition $\vec{\ell}$ of $\ell$.   We will show how to construct the Kac diagram for   $n_{\vec{\ell}}$.

Since $\vartheta$ is trivial, we have $f = 1$.  We take the simple roots $\gamma$ in $\Delta_\vartheta = \Delta$ to be the roots $\gamma_k = \alpha_k$ for $1 \leq k \leq \ell$.     For coweights we take $\check{\mu}_i = e_1 + e_2 + \cdots + e_i $ for $1 \leq i \leq \ell$.

The affine Dynkin diagram is 
\[
\begin{tikzpicture}
\begin{scope}[start chain]
\unode{1}
\unode{2}
\unode{3}
\dydots
\unode{\ell - 2}
\unode{\ell - 1}
\unodenj{\ell }
\path (chain-6) -- node{\(\Rightarrow\)} (chain-7);
\end{scope}
\begin{scope}[start chain=br going below]
\chainin(chain-2);
\dnodebr{0};
\end{scope}
\end{tikzpicture}
\]
   We have $ b_{\gamma_k} = c_{\gamma_k}$ for $0 \leq k < \ell$, so the $b_\gamma$ are given by the diagram
\[
\begin{tikzpicture}
\begin{scope}[start chain]
\enode{1}
\enode{2}
\enode{2}
\enode{2}
\dydots
\enode{2}
\enode{2}
\enodenj{2}
\path (chain-7) -- node{\(\Rightarrow\)} (chain-8);
\end{scope}
\begin{scope}[start chain=br going below]
\chainin(chain-2);
\enodebr{1};
\end{scope}
\end{tikzpicture}
\]

A Kac diagram describes a diagonal matrix in $\SO_{2 \ell +1}$ that is $G$-conjugate to $n_{\vec{\ell}}$.  Thus, we want to find a diagonal matrix in $\SO_{2 \ell +1}$ such that its characteristic polynomial for the standard action on  $\C^{2\ell+1}$ is  
$$q_{n_{\vec{\ell}}} (t)=  (t-(-1)^\mu) \cdot \prod_{\nu = 1}^\mu (t^{2 \ell_\nu} - 1).$$

Let $m = 2\lcm(\ell_1, \ell_2, \ldots , \ell_\mu)$ and let $\xi$ be a primitive $m^{\text{th}}$ root of unity. We have
$$q_{n_{\vec{\ell}}}(t) = (t-(-1)^{\mu}) \cdot \prod_{\nu = 1}^\mu (t^{2 \ell_\nu} - 1) = (t-(-1)^{\mu}) \cdot  \prod_{\nu = 1}^\mu (t-1)(t+1) \prod_{a_{\nu} = 1}^{\ell_\nu - 1} (t - \xi^{ma_{\nu}/2 \ell_\nu} )(t - \xi^{-ma_{\nu}/2 \ell_\nu} ).$$
Guided by  Remark~\ref{rem:decreasing} we create a length $\ell$ decreasing list as follows: order the positive integers $ma_{\nu}/2 \ell_\nu$ in decreasing order, then pre-append $\lceil \mu/2 \rceil$ copies of $m/2$ and post-append $\lfloor \mu/2 \rfloor$ zeroes.  We thus obtain a list $(\scoeff_1 \geq \scoeff_2 \geq \scoeff_3 \geq \cdots \geq \scoeff_{\ell-1} \geq \scoeff_\ell)$.  The element $d_{\vec{\ell}} = \Diag(\xi^{\scoeff_1},  \xi^{\scoeff_2}, \ldots , \xi^{\scoeff_{\ell -1}}, \xi^{\scoeff_{\ell}},  1, \xi^{-\scoeff_{\ell}}, \xi^{-\scoeff_{\ell-1}}, \ldots ,  \xi^{-\scoeff_2}, \xi^{-\scoeff_1}) $ in $\SO_{2 \ell +1}$ has characteristic polynomial $q_{n_{\vec{\ell}}}$  for its standard action on  $\C^{2\ell + 1}$. 
Since the linear factors $(t-(1)^{\mu})$ along with  $(t-\xi^{\scoeff_j})$ and $(t - \xi^{-\scoeff_j})$ for $1 \leq j \leq \ell$ must occur in $q_{n_{\vec{\ell}}}$,   we conclude that, up to the action of $\absW$, $d_{\vec{\ell}}$ is the unique element of $A$ that is $G$-conjugate to $n_{\vec{\ell}}$.

 We can now read off the Kac diagram for $n_{\vec{\ell}}$ from $d_{\vec{\ell}}$. 
  Note that $d_{\vec{\ell}} = \lambda_{\vec{\ell}} (\xi)$ where 
$$\lambda_{\vec{\ell}} = (\scoeff_1 - \scoeff_2)\check{\mu}_1 + (\scoeff_2 - \scoeff_3) \check{\mu}_2 + \cdots + (\scoeff_{\ell -1} - \scoeff_\ell) \check{\mu}_{\ell -1} + \scoeff_\ell \check{\mu}_\ell.$$
Since  $\dabs{\gamma}_\vartheta$ is $1$ for all $\gamma \in \Delta_\vartheta$ and 
$$\lambda_{\vec{\ell}}/m = 
\frac{1}{m} [
(\scoeff_1 - \scoeff_2)\check{\mu}_1 + (\scoeff_2 - \scoeff_3) \check{\mu}_2 + \cdots + (\scoeff_{\ell -1} - \scoeff_\ell) \check{\mu}_{\ell -1} + \scoeff_\ell \check{\mu}_\ell
],$$
from Equation~\ref{equ:equationxg} we conclude that the $s_{\gamma_\ell} = \scoeff_\ell$ and $s_{\gamma_k} = \scoeff_{k} - \scoeff_{k+1}$ for  $1 \leq k \leq \ell-1$.
From Equation~\ref{equ:equationform} we conclude that  $s_{\gamma_0} = m - (\scoeff_1 + \scoeff_2)$. Remove any factors that are common to all of the $s_\gamma$ for $\gamma \in \tilde{\Delta}$ and label the extended affine Dynkin diagram with the resulting $s_\gamma$.

 We have proved:
\begin{lemma}
Fix a partition $\vec{\ell} = (\ell_\mu, \ell_{\mu-1}, \ldots , \ell_2, \ell_1)$ of $\ell \geq 3$.  Let $m = 2\lcm(\ell_1, \ell_2, \ldots , \ell_\mu)$.  Append  $\lceil \mu/2 \rceil$  copies of $m/2$ and $\lfloor \mu/2 \rfloor$ copies of zero to the list
$( a_{\nu}m/2 \ell_\nu \, | \, \text{$1 \leq \nu \leq \mu$  and $ 1 \leq a_\nu \leq \ell_\nu -1$} )$
and then place the elements of the resulting list in decreasing order: 
$(\scoeff_1 \geq \scoeff_2 \geq \scoeff_3 \geq \cdots \geq \scoeff_{\ell-1} \geq \scoeff_\ell)$.
After removing any factors that are common to all of the labels, the Kac diagram for   $n_{\vec{\ell}}$ in a group of type $B_{\ell}$ with $\ell \geq 3$ is given by
\[
\begin{tikzpicture}
\begin{scope}[start chain]
\enode{\rotatebox[origin=c]{90}{$\sigma_2 - \sigma_3$}}
\enode{\rotatebox[origin=c]{90}{$\sigma_3 - \sigma_4$}}
\enode{\rotatebox[origin=c]{90}{$\sigma_4 - \sigma_5$}}
\dydots
\enode{\rotatebox[origin=c]{90}{$\sigma_{\ell-2} - \sigma_{\ell-1}$}}
\enode{\rotatebox[origin=c]{90}{$\sigma_{\ell-1} - \sigma_{\ell}$}}
\enodenj{\scoeff_\ell}
\path (chain-6) -- node{\(\Rightarrow\)} (chain-7);
\end{scope}
\begin{scope}[start chain=br going below]
\chainin(chain-1);
\node[nochj,label={below:\elabel{m - (\sigma_1 + \sigma_2)}}] at (-2,0) {};
\end{scope}
\begin{scope}[start chain=br going above]
\chainin(chain-1);
\node[nochj,label={above:\elabel{\sigma_1 - \sigma_2}}] at (-2,0) {};
\end{scope}
\end{tikzpicture}
\]
 \qed
\end{lemma}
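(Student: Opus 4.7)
The plan is to pin down, up to $\absW$-conjugacy, the unique diagonal matrix $d_{\vec{\ell}} \in A$ that is $G$-conjugate to $n_{\vec{\ell}}$ using the characteristic polynomial $q_{n_{\vec{\ell}}}(t) = (t - (-1)^\mu)\prod_{\nu=1}^\mu (t^{2\ell_\nu} - 1)$ computed earlier in Section~\ref{sec:injBl}, and then to translate the exponents appearing in the cocharacter $\lambda_{\vec{\ell}}$ satisfying $d_{\vec{\ell}} = \lambda_{\vec{\ell}}(\xi)$ into Kac-diagram labels via Equations~\ref{equ:equationxg} and~\ref{equ:equationform}.

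First I would factor $q_{n_{\vec{\ell}}}$ over $\C$ using a primitive $m$-th root of unity $\xi$, where $m = 2\lcm(\ell_1,\ldots,\ell_\mu)$. Each factor $t^{2\ell_\nu} - 1$ splits as $(t-1)(t+1)\prod_{a=1}^{\ell_\nu - 1}(t - \xi^{ma/2\ell_\nu})(t - \xi^{-ma/2\ell_\nu})$. Since the antidiagonal symmetry required for membership in $A \leq \SO_{2\ell+1}$ forces the diagonal entries of any element of $A$ to come in pairs $(\xi^s, \xi^{-s})$ around a central $1$, I would build $d_{\vec{\ell}} = \Diag(\xi^{\sigma_1}, \ldots, \xi^{\sigma_\ell}, 1, \xi^{-\sigma_\ell}, \ldots, \xi^{-\sigma_1})$, where the $\sigma_i$ are assembled by combining the positive exponents $ma_\nu/2\ell_\nu$ with $\lceil \mu/2 \rceil$ copies of $m/2$ (to account for the $-1$ eigenvalues together with the sign $(-1)^\mu$) and $\lfloor \mu/2 \rfloor$ copies of $0$ (to account for the remaining $+1$ eigenvalues from the $(t-1)$ factors). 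Remark~\ref{rem:decreasing} applied to the $\vartheta$-elliptic element $n_{\vec{\ell}}$ then forces the $\sigma_i$ to be arranged in weakly decreasing order, and matching the eigenvalue multiset against $q_{n_{\vec{\ell}}}$ shows that $d_{\vec{\ell}}$ is thereby determined uniquely up to $\absW$-conjugacy.

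Writing $\lambda_{\vec{\ell}}$ as the telescoping sum $\sum_{k=1}^{\ell-1}(\sigma_k - \sigma_{k+1})\check{\mu}_k + \sigma_\ell\check{\mu}_\ell$, I would read off from Equation~\ref{equ:equationxg} that $s_{\gamma_k} = \sigma_k - \sigma_{k+1}$ for $1 \leq k < \ell$ and $s_{\gamma_\ell} = \sigma_\ell$. The affine label $s_{\gamma_0}$ is then pinned down by Equation~\ref{equ:equationform}: with the marks $(b_{\gamma_0}, b_{\gamma_1}, b_{\gamma_2}, \ldots, b_{\gamma_\ell}) = (1,1,2,\ldots,2)$, a short calculation gives $s_{\gamma_0} = m - (\sigma_1 + \sigma_2)$. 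Dividing through by any common factor among the $s_\gamma$ yields the stated Kac diagram. The main obstacle I anticipate is verifying that this specific placement of the pre-appended $m/2$'s, the central $1$, and the post-appended zeros really is consistent both with the antidiagonal form forced by $\SO_{2\ell+1}$ and with the decreasing-exponent constraint of Remark~\ref{rem:decreasing}; once that compatibility check is confirmed, the translation through Equations~\ref{equ:equationxg} and~\ref{equ:equationform} is essentially bookkeeping.
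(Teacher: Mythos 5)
Your proposal is correct and follows essentially the same route as the paper: factor $q_{n_{\vec{\ell}}}$ over a primitive $m$-th root of unity, assemble the exponents (including the $\lceil\mu/2\rceil$ copies of $m/2$ and $\lfloor\mu/2\rfloor$ zeros) into a decreasing list to form the unique-up-to-$\absW$ diagonal representative $d_{\vec{\ell}}=\lambda_{\vec{\ell}}(\xi)$, and then read off the labels from Equations~\ref{equ:equationxg} and~\ref{equ:equationform}. The final compatibility check you flag is exactly the content of Remark~\ref{rem:decreasing} together with the eigenvalue-multiset matching, and your computation $s_{\gamma_0}=m-(\sigma_1+\sigma_2)$ from the marks $(1,1,2,\ldots,2)$ agrees with the paper.
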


 \begin{example}
 As an example, 
here is the diagram for $n_{(5,4,4,1)}$ in $\SO_{29}$.
\[
\begin{tikzpicture}
\begin{scope}[start chain]
\enode{0}
\enode{4}
\enode{1}
\enode{0}
\enode{3}
\enode{2}
\enode{0}
\enode{2}
\enode{3}
\enode{0}
\enode{1}
\enode{4}
\enode{0}
\enodenj{0}
\path (chain-13) -- node{\(\Rightarrow\)} (chain-14);
\end{scope}
\begin{scope}[start chain=br going below]
\chainin(chain-2);
\enodebr{0};
\end{scope}
\end{tikzpicture}
\]
\end{example}

The results here agree with those found in the table in~\cite[Section A.3]{reeder:thomae}. The partitions (in the notation of \cite[Section A.3]{reeder:thomae}) for the Kac diagrams appearing there are (from top to bottom): $(n)$, $(n/2,n/2)$ for $n$ even, $(n/k, n/k, n/k, \ldots , n/k, n/k)$ for $k >2$ even and $k$ divides $n$, and  $(n/k, n/k, n/k, \ldots , n/k, n/k)$ for $k>1$ odd and $k$ divides $n$.   The final two partitions in the previous sentence are identical and this correspondence between Kac diagrams and partitions agrees with~\cite[Table~12]{reederetal:gradings}.       
From Remark~\ref{rem:regellBl} these partitions correspond to the regular elliptic elements in a Weyl group of type $B_n$.

\subsection{How to create a Kac diagram for \texorpdfstring{$n_{\vec{\ell}}$ for $C_\ell$ with $\ell \geq 2$}{kac C}}  
We adopt the notation of Section~\ref{sec:injCl}.  Fix a partition $\vec{\ell}$ of $\ell$.   We will show how to construct the Kac diagram for   $n_{\vec{\ell}}$.

Since $\vartheta$ is trivial, we have $f = 1$.  We take the simple roots $\gamma$ in $\Delta_\vartheta = \Delta$ to be the roots $\gamma_k = \alpha_k$ for $1 \leq k \leq \ell$.   
For fundamental coweights with respect to our basis we take
$\check{\mu}_i = e_1 + e_2 + \cdots + e_i$ for $1 \leq i < \ell$ and $\check{\mu}_\ell = 1/2 (e_1 + e_2 + \cdots + e_\ell)$. 
The affine Dynkin diagram is 
\[
\begin{tikzpicture}[start chain]
\unodenj{0}
\unodenj{1}
\unode{2}
\dydots
\unode{\ell - 2}
\unode{\ell - 1}
\unodenj{\ell}
\path (chain-1) -- node{\(\Rightarrow\)} (chain-2);
\path (chain-6) -- node{\(\Leftarrow\)} (chain-7);
\end{tikzpicture}
\]
 We have $ b_{\gamma_k} = c_{\gamma_k}$ for $0 \leq k < \ell$, so the $b_\gamma$ are given by the diagram
\[
\begin{tikzpicture}[start chain]
\enodenj{1}
\enodenj{2}
\enode{2}
\dydots
\enode{2}
\enode{2}
\enodenj{1}
\path (chain-1) -- node{\(\Rightarrow\)} (chain-2);
\path (chain-6) -- node{\(\Leftarrow\)} (chain-7);
\end{tikzpicture}
\]

A Kac diagram describes a diagonal matrix in $\Sp_{2 \ell}$ that is $G$-conjugate to $n_{\vec{\ell}}$.  Thus, we want to find a diagonal matrix in $\Sp_{2 \ell}$ such that its characteristic polynomial for the standard action on  $\C^{2\ell}$ is  
$$q_{n_{\vec{\ell}}} (t)=  \prod_{\nu = 1}^\mu (t^{2 \ell_\nu} + 1).$$

Let $m = 4\lcm(\ell_1, \ell_2, \ldots , \ell_\mu)$; this is the order of $n_{\vec{\ell}}$.
Let $\xi$ be a primitive $m^{\text{th}}$ root of unity.  We have
$$q_{n_{\vec{\ell}}}(t) = 
\prod_{\nu = 1}^\mu 
\prod_{a_{\nu} = 1}^{\ell_\nu} 
(t - \xi^{(2 a_\nu -1) m / 4 \ell_\nu)} )(t - \xi^{-(2 a_\nu -1) m / 4 \ell_\nu)} ).$$
Guided by  Remark~\ref{rem:decreasing} we create a length $\ell$ decreasing list as follows: order the positive integers $(2 a_\nu -1) m / 4 \ell_\nu$ for $1 \leq \nu \leq \mu$ and $1 \leq \a_\nu \leq \ell_\nu$ in decreasing order.  We thus obtain a list $ S=(\scoeff_1 \geq \scoeff_2 \geq \scoeff_3 \geq \cdots \geq \scoeff_{\ell-1} \geq \scoeff_\ell)$.

The element $d_{\vec{\ell}} = \Diag(\xi^{\scoeff_1},  \xi^{\scoeff_2}, \ldots , \xi^{\scoeff_{\ell -1}}, \xi^{\scoeff_{\ell}},  \xi^{-\scoeff_{\ell}}, \xi^{-\scoeff_{\ell-1}}, \ldots ,  \xi^{-\scoeff_2}, \xi^{-\scoeff_1}) $ in $\Sp_{2 \ell}$ has characteristic polynomial $q_{n_{\vec{\ell}}}$  for its standard action on  $\C^{2\ell}$. 
Since the linear factors   $(t-\xi^{\scoeff_j})$ and $(t - \xi^{-\scoeff_j})$ for $1 \leq j \leq \ell$ must occur in $q_{n_{\vec{\ell}}}$,   we conclude that, up to the action of $\absW$, $d_{\vec{\ell}}$ is the unique element of $A$ that is $G$-conjugate to $n_{\vec{\ell}}$.

 We can now read off the Kac diagram for $n_{\vec{\ell}}$ from $d_{\vec{\ell}}$. 
  Note that $d_{\vec{\ell}} = \lambda_{\vec{\ell}} (\xi)$ where 
$$\lambda_{\vec{\ell}} = (\scoeff_{1} - \scoeff_{2}) \check{\mu}_1 + (\scoeff_{2} - \scoeff_{3}) \check{\mu}_2  + \ldots + (\scoeff_{\ell-1} - \scoeff_{\ell}) \check{\mu}_{\ell -1}  +2\scoeff_\ell \check{\mu}_{\ell}.$$
Since  $\dabs{\gamma}_\vartheta$ is $1$ for all $\gamma \in \Delta_\vartheta$ and 
$$\lambda_{\vec{\ell}}/m = 
\frac{1}{m} [ (\scoeff_{1} - \scoeff_{2}) \check{\mu}_1 + (\scoeff_{2} - \scoeff_{3}) \check{\mu}_2  + \ldots + (\scoeff_{\ell-1} - \scoeff_{\ell}) \check{\mu}_{\ell -1}  +2\scoeff_\ell \check{\mu}_{\ell}],$$
from Equation~\ref{equ:equationxg} we conclude that
 $s_{\gamma_\ell} = 2\scoeff_\ell$ and $s_{\gamma_k} = (\scoeff_{k} - \scoeff_{k+1})$ for  $1 \leq k \leq \ell-1$.
The coefficient $s_{\gamma_0} = m - 2 \scoeff_1$ is derived using Equation~\ref{equ:equationform}. 
Remove any factors that are common to all of the $s_\gamma$ for $\gamma \in \tilde{\Delta}$ and label the extended affine Dynkin diagram with the resulting $s_\gamma$.

 We have proved:
\begin{lemma}
Fix a partition $\vec{\ell} = (\ell_\mu, \ell_{\mu-1}, \ldots , \ell_2, \ell_1)$ of $\ell \geq 2$.  Let $m = 4\lcm(\ell_1, \ell_2, \ldots , \ell_\mu)$.  Take the list
$( (2a_{\nu}-1)m/4 \ell_\nu \, | \, \text{$1 \leq \nu \leq \mu$  and $ 1 \leq a_\nu \leq \ell_\nu$} )$
and place the elements in decreasing order: 
$(\scoeff_1 \geq \scoeff_2 \geq \scoeff_3 \geq \cdots \geq \scoeff_{\ell-1} \geq \scoeff_\ell)$.
After removing any factors that are common to all of the labels, the Kac diagram for   $n_{\vec{\ell}}$ in a group of type $C_{\ell}$ with $\ell \geq 2$ is given by
\[
\begin{tikzpicture}
\begin{scope}[start chain]
\enodenj{\rotatebox[origin=c]{90}{$m - 2\sigma_1$}}
\enodenj{\rotatebox[origin=c]{90}{$\sigma_1 - \sigma_2$}}
\enode{\rotatebox[origin=c]{90}{$\sigma_2 - \sigma_3$}}
\dydots
\enode{\rotatebox[origin=c]{90}{$\sigma_{\ell-2} - \sigma_{\ell-1}$}}
\enode{\rotatebox[origin=c]{90}{$\sigma_{\ell-1} - \sigma_{\ell}$}}
\enodenj{2\scoeff_\ell}
\path (chain-1) -- node{\(\Rightarrow\)} (chain-2);
\path (chain-6) -- node{\(\Leftarrow\)} (chain-7);
\end{scope}
\end{tikzpicture}
\]
 \qed
\end{lemma}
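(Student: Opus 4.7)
The plan is to assemble the derivation laid out in the paragraphs immediately preceding the statement into a formal proof, whose main ingredients are the characteristic polynomial computation from Section~\ref{sec:injCl}, Remark~\ref{rem:decreasing}, and the two structural equations~(\ref{equ:equationxg}) and~(\ref{equ:equationform}). First I would factor the characteristic polynomial
$$q_{n_{\vec{\ell}}}(t) = \prod_{\nu=1}^{\mu}(t^{2\ell_\nu}+1)$$
over $\C$. Taking $m = 4 \lcm(\ell_1,\ldots,\ell_\mu)$ and $\xi$ a primitive $m^{\text{th}}$ root of unity, the roots of each factor $t^{2\ell_\nu}+1$ are precisely $\xi^{\pm(2a_\nu-1)m/(4\ell_\nu)}$ for $1\leq a_\nu \leq \ell_\nu$, furnishing a complete list of $2\ell$ eigenvalues paired as reciprocals.

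Next, since $n_{\vec{\ell}}$ is $G$-conjugate to an element of $A$ (by Lemma~\ref{lem:sigmaconj} applied to this torsion element, or by direct Weyl-group reasoning), that element must be a diagonal matrix of the form $\Diag(a_1,\ldots,a_\ell,a_\ell^{-1},\ldots,a_1^{-1})$ whose characteristic polynomial equals $q_{n_{\vec{\ell}}}$. The reciprocal pairing of roots together with Remark~\ref{rem:decreasing}, which allows us to arrange the exponents in decreasing order after further $N_G(A)$-conjugation, produces the list $(\sigma_1 \geq \sigma_2 \geq \cdots \geq \sigma_\ell)$ described in the lemma, and the diagonal matrix
$$d_{\vec{\ell}} = \Diag(\xi^{\sigma_1},\ldots,\xi^{\sigma_\ell},\xi^{-\sigma_\ell},\ldots,\xi^{-\sigma_1}) \in \Sp_{2\ell}$$
is then the essentially unique candidate.

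The remaining task is to translate $d_{\vec{\ell}}$ into a Kac diagram. Writing $d_{\vec{\ell}} = \lambda_{\vec{\ell}}(\xi)$ with
$$\lambda_{\vec{\ell}} = (\sigma_1-\sigma_2)\check{\mu}_1 + (\sigma_2-\sigma_3)\check{\mu}_2 + \cdots + (\sigma_{\ell-1}-\sigma_\ell)\check{\mu}_{\ell-1} + 2\sigma_\ell \check{\mu}_\ell$$
(the coefficient $2\sigma_\ell$ compensating for the factor $\tfrac{1}{2}$ built into $\check{\mu}_\ell = \tfrac{1}{2}(e_1+\cdots+e_\ell)$), equation~(\ref{equ:equationxg}) with $|\gamma|_\vartheta = 1$ and $f=1$ immediately reads off $s_{\gamma_k} = \sigma_k - \sigma_{k+1}$ for $1 \leq k < \ell$ and $s_{\gamma_\ell} = 2\sigma_\ell$, while equation~(\ref{equ:equationform}) combined with the values of $b_\gamma$ recorded above forces $s_{\gamma_0} = m - 2\sigma_1$. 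Dividing by any common factor yields the diagram as stated.

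The main obstacle is the type-$C$ bookkeeping around the fundamental coweight $\check{\mu}_\ell$, which is half the sum $e_1+\cdots+e_\ell$: one must verify that $\lambda_{\vec{\ell}}$ as written genuinely lies in $\X_*(A)$ (equivalently, that $\sigma_\ell$ is such that $2\sigma_\ell$ is an integer and the differences $\sigma_k-\sigma_{k+1}$ are integers) and that the coefficient $2\sigma_\ell$ on the short-root end propagates correctly through~(\ref{equ:equationxg}) and~(\ref{equ:equationform}). A secondary but routine point is the uniqueness-up-to-$\absW$ assertion: since $\absW$ acts on the $e_i$ by signed permutations, matching the multiset of eigenvalues of $d_{\vec{\ell}}$ to the decreasing tuple $(\sigma_1,\ldots,\sigma_\ell)$ is a standard sorting argument that I would spell out briefly to close the proof.
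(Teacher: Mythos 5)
Your proposal is correct and follows essentially the same route as the paper: factor $q_{n_{\vec{\ell}}}(t)=\prod_\nu(t^{2\ell_\nu}+1)$ over a primitive $m^{\text{th}}$ root of unity, sort the exponents into the decreasing list $(\scoeff_1\geq\cdots\geq\scoeff_\ell)$ guided by Remark~\ref{rem:decreasing}, identify $d_{\vec{\ell}}=\lambda_{\vec{\ell}}(\xi)$ as the unique (up to $\absW$) diagonal representative, and read off the labels from Equations~(\ref{equ:equationxg}) and~(\ref{equ:equationform}). The integrality point you flag is harmless since $m/(4\ell_\nu)=\lcm(\ell_1,\ldots,\ell_\mu)/\ell_\nu\in\Z$, so each $\scoeff_i$ is an integer.
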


\begin{example}
As examples, here is the diagram for $n_{(6,5,2)}$ in $\Sp_{26}$
\[
\begin{tikzpicture}[start chain]
\enodenj{10}
\enodenj{1}
\enode{9}
\enode{0}
\enode{3}
\enode{7}
\enode{5}
\enode{5}
\enode{7}
\enode{3}
\enode{0}
\enode{9}
\enode{1}
\enodenj{10}
\path (chain-1) -- node{\(\Rightarrow\)} (chain-2);
\path (chain-14) -- node{\(\Leftarrow\)} (chain-13);
\end{tikzpicture}
\]
 here is the diagram for $n_{(2,1)}$ in $\Sp_{6}$
\[
\begin{tikzpicture}[start chain]
\enodenj{2}
\enodenj{1}
\enode{1}
\enodenj{2}
\path (chain-1) -- node{\(\Rightarrow\)} (chain-2);
\path (chain-4) -- node{\(\Leftarrow\)} (chain-3);
\end{tikzpicture}
\]
and here is the diagram for $n_{(3)}$ in $\Sp_{6}$
\[
\begin{tikzpicture}[start chain]
\enodenj{1}
\enodenj{1}
\enode{1}
\enodenj{1}
\path (chain-1) -- node{\(\Rightarrow\)} (chain-2);
\path (chain-4) -- node{\(\Leftarrow\)} (chain-3);
\end{tikzpicture}
\]
\end{example}

\begin{example}
In Figure~ \ref{fig:C2location} for groups of type $C_2$ we show the location\footnote{Note that the (weighted) barycenter for $C_2$ is different from that of $\lsup{2}A_3$ and $\lsup{2}D_3$ -- see Figures~\ref{fig:2A3location} and~\ref{fig:2D3location}.}  of the vertices $v_0$, $v_1$, and $v_2$ of the fundamental alcove as well as the points determined by $\lambda_{\vec{\ell}}/m$. The Kac diagram for $(2)$ is 
\(
\begin{tikzpicture}[start chain]
\enodenj{1}
\enodenj{1}
\enodenj{1}
\path (chain-1) -- node{\(\Rightarrow\)} (chain-2);
\path (chain-3) -- node{\(\Leftarrow\)} (chain-2);
\end{tikzpicture}
\)
and the Kac diagram for $(1,1)$ is 
\(
\begin{tikzpicture}[start chain]
\enodenj{1}
\enodenj{0}
\enodenj{1}
\path (chain-1) -- node{\(\Rightarrow\)} (chain-2);
\path (chain-3) -- node{\(\Leftarrow\)} (chain-2);
\end{tikzpicture}.
\)
\begin{figure}[ht]
\centering
\begin{tikzpicture}
\draw (0,0) 
  -- (5,0)
  -- (5,5)
  -- cycle;
  \draw[black, fill=white] (0,0) circle (.5ex);
\draw(-.3,-.4) node[anchor=south]{$v_0$};
 \draw[black,fill=white] (5,0) circle (.5ex);
\draw(5.3,-.4) node[anchor=south]{$v_1$};
 \draw[black,fill=white] (5,5) circle (.5ex);
\draw(5.3,4.9) node[anchor=south]{$v_2$};
  \draw[black,fill=black] (3.75,1.25) circle (.5ex);
 \draw(4.2,1.35) node[anchor=north]{$\frac{\lambda_{(2)}}{8}$};
  \draw[black,fill=black] (2.5,2.5) circle (.5ex);
\draw(3,2.6) node[anchor=north]{$\frac{\lambda_{(1,1)}}{4}$};
\end{tikzpicture}
\caption{The location of points determined by  $\lambda_{\vec{\ell}}$ for groups of type $C_2$ \label{fig:C2location}}
\end{figure}
\end{example}

During the publishing process, a typo was introduced in the table in~\cite[Section A.4]{reeder:thomae}.  For the case $k \mid n$ and $k> 1$  the Kac diagram should have a $1$ on the $\gamma_0$ node; that is, it should be 
\[
\begin{tikzpicture}[start chain]
\enodenj{1}
\enodenj{0}
\enode{0}
\enodenj{0}
\enode{1}
\enode{0}
\enode{0}
\enodenj{0}
\enode{1}
\enodenj{1}
\enode{0}
\enode{0}
\enodenj{0}
\enodenj{1}
\path (chain-1) -- node{\(\Rightarrow\)} (chain-2);
\path (chain-3) -- node{\(\cdots\)} (chain-4);
\path (chain-7) -- node{\(\cdots\)} (chain-8);
\path (chain-9) -- node{\(\cdots\)} (chain-10);
\path (chain-12) -- node{\(\cdots\)} (chain-13);
\path (chain-14) -- node{\(\Leftarrow\)} (chain-13);
\draw[decorate, decoration={  brace,  amplitude=10pt}] (1,.12)-- node[above=0.35cm]
{$(k-1)$ zeroes}(3,.12);
\draw[decorate, decoration={  brace,  amplitude=10pt}] (4.9,.12)-- node[above=0.35cm]
{$(k-1)$ zeroes}(6.9,.12);
\draw[decorate, decoration={  brace,  amplitude=10pt}] (9.8,.12)-- node[above=0.35cm]
{$(k-1)$ zeroes}(11.8,.12);
\node[above=0.35cm] at (8.3,.12) {$\cdots$};
\end{tikzpicture}
\]
where there are $n/k$ strings of $k-1$ zeroes.  With this change,  the partitions (in the notation of \cite[Section A.4]{reeder:thomae}) for the Kac diagrams appearing there are (from top to bottom): $(n)$ and  $(n/k, n/k, n/k, \ldots , n/k, n/k)$ with $k>1$ and dividing $n$.  
This correspondence between Kac diagrams and partitions agrees with~\cite[Table~13]{reederetal:gradings}.       
From Remark~\ref{rem:regellCl} these partitions correspond to the regular elliptic  elements in a Weyl group of type $C_n$.

\subsection{How to create a Kac diagram for \texorpdfstring{$n_{\vec{\ell}}$ for $D_\ell$ with $\ell \geq 4$}{kac D}}  
We adopt the notation of Section~\ref{sec:injDl}.  Fix a partition $\vec{\ell}$ of $\ell$ with an even number of parts.   We will show how to construct the Kac diagram for  $n_{\vec{\ell}}$.

Since $\vartheta$ is trivial, we have $f = 1$.  We take the simple roots $\gamma$ in $\Delta_\vartheta = \Delta$ to be the roots $\gamma_k = \alpha_k$ for $1 \leq k \leq \ell$.   
For fundamental coweights with respect to our basis we take
$\check{\mu}_i = e_1 + e_2 + \cdots + e_i$ for $1 \leq i \leq \ell-2$, $\check{\mu}_{\ell-1} = 1/2 (e_1 + e_2 + \cdots e_{\ell-2}  +  e_{\ell-1} - e_\ell)$, and $\check{\mu}_{\ell} = 1/2 (e_1 + e_2 + \cdots e_{\ell-2}  +  e_{\ell-1} + e_\ell)$. 
The affine Dynkin diagram is 
\[
\begin{tikzpicture}
\begin{scope}[start chain]
\unode{1}
\unode{2}
\unode{3}
\dydots
\unode{\ell - 3}
\unode{\ell - 2}
\unode{\ell -1}
\end{scope}
\begin{scope}[start chain=br going below]
\chainin(chain-2);
\dnodebr{0};
\end{scope}
\begin{scope}[start chain=br2 going below]
\chainin(chain-6);
\dnodebr{\ell};
\end{scope}
\end{tikzpicture}
\]
 We have $ b_{\gamma_k} = c_{\gamma_k}$ for $0 \leq k < \ell$, so the $b_\gamma$ are given by the diagram
\[
\begin{tikzpicture}
\begin{scope}[start chain]
\enode{1}
\enode{2}
\enode{2}
\dydots
\enode{2}
\enode{2}
\enode{1}
\end{scope}
\begin{scope}[start chain=br going below]
\chainin(chain-2);
\enodebr{1};
\end{scope}
\begin{scope}[start chain=br2 going below]
\chainin(chain-6);
\enodebr{1};
\end{scope}
\end{tikzpicture}
\]

A Kac diagram describes a diagonal matrix in $\SO_{2 \ell}$ that is $G$-conjugate to $n_{\vec{\ell}}$.  Thus, we want to find a diagonal matrix in $\SO_{2 \ell}$ such that its characteristic polynomial for the standard action on  $\C^{2\ell}$ is 
$$q_{n_{\vec{\ell}}} (t)=   \prod_{\nu = 1}^\mu (t^{2 \ell_\nu} - 1).$$ 

Let $m = 2\lcm(\ell_1, \ell_2, \ldots , \ell_\mu)$ and let $\xi$ be a primitive $m^{\text{th}}$ root of unity. We have
$$q_{n_{\vec{\ell}}} (t)= \prod_{\nu = 1}^\mu (t^{2 \ell_\nu} - 1) = \prod_{\nu = 1}^\mu (t-1)(t+1) \prod_{a_{\nu} = 1}^{\ell_\nu - 1} (t - \xi^{ma_{\nu}/2 \ell_\nu} )(t - \xi^{-ma_{\nu}/2 \ell_\nu} ).$$
Guided by  Remark~\ref{rem:decreasing} we create a length $\ell$ decreasing list as follows: order the positive integers $ma_{\nu}/2 \ell_\nu$ in decreasing order, then pre-append $\mu/2$ copies of $m/2$ and post-append $\mu/2$ zeroes.  We thus obtain a list $(\scoeff_1 \geq \scoeff_2 \geq \scoeff_3 \geq \cdots \geq \scoeff_{\ell-1} \geq \scoeff_\ell)$.  The element $d_{\vec{\ell}} = \Diag(\xi^{\scoeff_1},  \xi^{\scoeff_2}, \ldots , \xi^{\scoeff_{\ell -1}}, \xi^{\scoeff_{\ell}},  \xi^{-\scoeff_{\ell}}, \xi^{-\scoeff_{\ell-1}}, \ldots ,  \xi^{-\scoeff_2}, \xi^{-\scoeff_1}) $ in $\SO_{2 \ell}$ has characteristic polynomial $q_{n_{\vec{\ell}}}$  for its standard action on  $\C^{2\ell}$. 

 Since the linear factors  $(t-\xi^{\scoeff_j})$ and $(t - \xi^{-\scoeff_j})$ for $1 \leq j \leq \ell$ must occur in $q_{n_{\vec{\ell}}}$,   we conclude that, up to the action of $\absW$, $d_{\vec{\ell}}$ is the unique element of $A$ that is $G$-conjugate to $n_{\vec{\ell}}$.

We can now read off the Kac diagram for $n_{\vec{\ell}}$ from $d_{\vec{\ell}}$. 
  Note that $d_{\vec{\ell}} = \lambda_{\vec{\ell}} (\xi)$ where 
$$\lambda_{\vec{\ell}} =  ({\scoeff_{1} - \scoeff_{2}}) \check{\mu}_1 + ({\scoeff_{2} - \scoeff_{3}}) \check{\mu}_2  + \ldots + ({\scoeff_{\ell-2} - \scoeff_{\ell-1}}) \check{\mu}_{\ell -2}  + (\scoeff_{\ell-1} - \scoeff_\ell) \check{\mu}_{\ell-1} + (\scoeff_{\ell-1} + \scoeff_\ell) \check{\mu}_{\ell}.$$
Since  $\dabs{\gamma}_\vartheta$ is $1$ for all $\gamma \in \Delta_\vartheta$ and 
$$\lambda_{\vec{\ell}}/m = 
\frac{1}{m} [
({\scoeff_{1} - \scoeff_{2}}) \check{\mu}_1 + ({\scoeff_{2} - \scoeff_{3}}) \check{\mu}_2  + \ldots + ({\scoeff_{\ell-2} - \scoeff_{\ell-1}}) \check{\mu}_{\ell -2}  + (\scoeff_{\ell-1} - \scoeff_\ell) \check{\mu}_{\ell-1} + (\scoeff_{\ell-1} + \scoeff_\ell) \check{\mu}_{\ell}],$$
from Equation~\ref{equ:equationxg} we conclude that
$s_{\gamma_\ell} = \scoeff_{\ell-1} + \scoeff_\ell$ and
 $s_{\gamma_k} = (\scoeff_{k} - \scoeff_{k+1})$ for  $1 \leq k \leq \ell-1$.
The coefficient $s_{\gamma_0} = m - (\scoeff_1+\scoeff_2)$ is derived using Equation~\ref{equ:equationform}. 
Remove any factors that are common to all of the $s_\gamma$ for $\gamma \in \tilde{\Delta}$ and label the extended affine Dynkin diagram with the resulting $s_\gamma$.

We have proved:
\begin{lemma}
Fix a partition $\vec{\ell} = (\ell_\mu, \ell_{\mu-1}, \ldots , \ell_2, \ell_1)$ of $\ell \geq 4$ with $\mu$ even.  Let $m = 2\lcm(\ell_1, \ell_2, \ldots , \ell_\mu)$.  Append  $\mu/2$  copies of $m/2$ and $\mu/2$ copies of zero to the list
$( a_{\nu}m/2 \ell_\nu \, | \, \text{$1 \leq \nu \leq \mu$  and $ 1 \leq a_\nu \leq \ell_\nu -1$} )$
and then place the elements of the resulting list in decreasing order: 
$(\scoeff_1 \geq \scoeff_2 \geq \scoeff_3 \geq \cdots \geq \scoeff_{\ell-1} \geq \scoeff_\ell)$.
After removing any factors that are common to all of the labels, the Kac diagram for   $n_{\vec{\ell}}$ in a group of type $D_{\ell}$ with $\ell \geq 4$ is given by
\[
\begin{tikzpicture}
\begin{scope}[start chain]
\enode{\rotatebox[origin=c]{90}{$\sigma_2 - \sigma_3$}}
\enode{\rotatebox[origin=c]{90}{$\sigma_3 - \sigma_4$}}
\enode{\rotatebox[origin=c]{90}{$\sigma_4 - \sigma_5$}}
\dydots
\enode{\rotatebox[origin=c]{90}{$\sigma_{\ell-4} - \sigma_{\ell-3}$}}
\enode{\rotatebox[origin=c]{90}{$\sigma_{\ell-3} - \sigma_{\ell-2}$}}
\enode{\rotatebox[origin=c]{90}{$\sigma_{\ell-2} - \sigma_{\ell-1}$}}
\end{scope}
\begin{scope}[start chain=br going below]
\chainin(chain-1);
\node[nochj,label={below:\elabel{m - (\sigma_1 + \sigma_2)}}] at (-2,0) {};
\end{scope}
\begin{scope}[start chain=br going above]
\chainin(chain-1);
\node[nochj,label={above:\elabel{\sigma_1 - \sigma_2}}] at (-2,0) {};
\end{scope}
\begin{scope}[start chain=br going below]
\chainin(chain-7);
\node[nochj,label={below:\elabel{\sigma_{\ell-1} + \sigma_\ell}}] at (8,0) {};
\end{scope}
\begin{scope}[start chain=br going above]
\chainin(chain-7);
\node[nochj,label={above:\elabel{\sigma_{\ell-1} - \sigma_\ell}}] at (8,0) {};
\end{scope}
\end{tikzpicture}
\]
 \qed
\end{lemma}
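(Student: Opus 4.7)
My plan is to mirror the strategy already executed for types $B_\ell$ and $C_\ell$: first identify a diagonal representative $d_{\vec{\ell}}\in A$ whose characteristic polynomial on $\C^{2\ell}$ matches $q_{n_{\vec{\ell}}}$, then convert $d_{\vec{\ell}}$ into a Kac label list using Equations~(\ref{equ:equationxg}) and~(\ref{equ:equationform}). From Section~\ref{sec:injDl} we already know $q_{n_{\vec{\ell}}}(t)=\prod_{\nu=1}^\mu(t^{2\ell_\nu}-1)$, and setting $m=2\lcm(\ell_1,\ldots,\ell_\mu)$ with $\xi$ a primitive $m^{\text{th}}$ root of unity, I factor this as a product of linear factors $(t-1)(t+1)\prod(t-\xi^{ma_\nu/2\ell_\nu})(t-\xi^{-ma_\nu/2\ell_\nu})$.

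Next I would form the length $\ell$ list guided by Remark~\ref{rem:decreasing}: sort the positive exponents $ma_\nu/2\ell_\nu$ in decreasing order and pad with $\mu/2$ copies of $m/2$ at the front and $\mu/2$ zeroes at the end (the evenness of $\mu$ is exactly what allows an equal split between the $+1$ and $-1$ eigenvalues, ensuring the resulting diagonal matrix lies in $\SO_{2\ell}$ rather than $\On_{2\ell}\setminus\SO_{2\ell}$). This produces $(\sigma_1\geq\sigma_2\geq\cdots\geq\sigma_\ell)$ and the diagonal element $d_{\vec{\ell}}=\Diag(\xi^{\sigma_1},\ldots,\xi^{\sigma_\ell},\xi^{-\sigma_\ell},\ldots,\xi^{-\sigma_1})$ whose characteristic polynomial is forced to equal $q_{n_{\vec{\ell}}}$, so by the now-familiar uniqueness argument $d_{\vec{\ell}}$ is the unique $A$-representative up to $\absW$.

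The genuinely new step, and the one that distinguishes $D_\ell$ from $B_\ell$ and $C_\ell$, is the coweight expansion. Because the fundamental coweights $\check{\mu}_{\ell-1}=\frac{1}{2}(e_1+\cdots+e_{\ell-1}-e_\ell)$ and $\check{\mu}_\ell=\frac{1}{2}(e_1+\cdots+e_{\ell-1}+e_\ell)$ involve $e_\ell$ with opposite signs, solving $d_{\vec{\ell}}=\lambda_{\vec{\ell}}(\xi)$ yields
\[
\lambda_{\vec{\ell}}=\sum_{k=1}^{\ell-2}(\sigma_k-\sigma_{k+1})\check{\mu}_k+(\sigma_{\ell-1}-\sigma_\ell)\check{\mu}_{\ell-1}+(\sigma_{\ell-1}+\sigma_\ell)\check{\mu}_\ell,
\]
with the $\pm$ appearing naturally in the last two coefficients. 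Applying Equation~(\ref{equ:equationxg}) (recalling $|\gamma|_\vartheta=1$ and $f=1$) reads off $s_{\gamma_k}=\sigma_k-\sigma_{k+1}$ for $1\leq k\leq\ell-2$, $s_{\gamma_{\ell-1}}=\sigma_{\ell-1}-\sigma_\ell$, and $s_{\gamma_\ell}=\sigma_{\ell-1}+\sigma_\ell$, while Equation~(\ref{equ:equationform}) combined with the fact that $b_{\gamma_0}=b_{\gamma_1}=b_{\gamma_{\ell-1}}=b_{\gamma_\ell}=1$ and all other $b_{\gamma_k}=2$ forces $s_{\gamma_0}=m-(\sigma_1+\sigma_2)$.

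The only genuine obstacle is verifying that every $s_\gamma$ arising this way is non-negative, i.e.\ that $\sigma_{\ell-1}\geq|\sigma_\ell|$ and $\sigma_1+\sigma_2\leq m$; both follow from the way the padding with $m/2$'s and zeroes was arranged and from $ma_\nu/2\ell_\nu<m/2$, so the list is automatically bounded by $m/2$ in absolute value with $\sigma_\ell\geq 0$. After dividing out any common factor from the $(s_\gamma)_{\gamma\in\tilde{\Delta}}$ and labeling the affine Dynkin diagram according to the usual placement of $\gamma_0$ and $\gamma_\ell$ on the forked ends, the displayed diagram is obtained, completing the lemma.
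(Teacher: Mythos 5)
Your proposal is correct and follows essentially the same route as the paper: build the diagonal representative $d_{\vec{\ell}}$ from the factorization of $q_{n_{\vec{\ell}}}$ with the $\mu/2$-and-$\mu/2$ padding, invoke uniqueness up to $\absW$, expand against the fundamental coweights (with the sign split between $\check{\mu}_{\ell-1}$ and $\check{\mu}_\ell$ at the fork), and read off the labels via Equations~(\ref{equ:equationxg}) and~(\ref{equ:equationform}). Your explicit check that all $s_\gamma$ are non-negative is a small addition the paper leaves implicit, but the argument is the same.
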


\begin{example}
As an example, here is the diagram for $n_{(5,4,4,1)}$ in $\SO_{28}$.
\[
\begin{tikzpicture}
\begin{scope}[start chain]
\enode{0}
\enode{4}
\enode{1}
\enode{0}
\enode{3}
\enode{2}
\enode{0}
\enode{2}
\enode{3}
\enode{0}
\enode{1}
\enode{4}
\enode{0}
\end{scope}
\begin{scope}[start chain=br going below]
\chainin(chain-2);
\enodebr{0};
\end{scope}
\begin{scope}[start chain=br2 going below]
\chainin(chain-12);
\enodebr{0};
\end{scope}
\end{tikzpicture}
\]
\end{example}

 Many Kac diagrams for type $D_\ell$ appear in the literature.  The results here agree with those found in~\cite{Bouwknegt:Lie}.  For $0 \leq k \leq \ell/2$ the Kac diagram of the conjugacy class of $n_{(\ell - k,k)}$, which corresponds to the label $D_{\ell}(a_{k-1})$ in the notation of~\cite{carter:weyl}, may be extracted from~\cite[Appendix B]{schellekens-warner:weylII}, and our results agree with the results found there.  
 They also agree with those found in the table in~\cite[Section A.5]{reeder:thomae}; the partitions (in the notation of \cite[Section A.5]{reeder:thomae}) for the Kac diagrams appearing there are (from top to bottom): $(n-1,1)$, $(n/2,n/2)$ for $n$ even, $(1,1,1, \ldots , 1, 1)$ for $n$ even, $(n/k, n/k, n/k, \ldots , n/k, n/k)$ for $2 < k< n$ even and dividing $n$, and $((n-1)/k, (n-1)/k, (n-1)/k, \ldots , (n-1)/k, (n-1)/k, 1)$ for $1<k<n-1$ odd and $k$ dividing $(n-1)$. This correspondence between Kac diagrams and partitions agrees with~\cite[Table~14]{reederetal:gradings}.     
From Remark~\ref{rem:regellDl} these partitions correspond to the regular elliptic  elements in a Weyl group of type $D_n$.

\subsection{How to create Kac diagrams for \texorpdfstring{$\lsup{2}A_{\ell-1}$ with $\ell \geq 3$}{kac 2A}}
 We adopt the notation of Section~\ref{sec:inj2Alminusone}. 
  Fix a partition $\vec{\ell}$ of $\ell$ for which all parts are odd.   We will show how to construct the Kac diagram for  $n_{\vec{\ell}} \rtimes \vartheta$.

Note that $\vartheta$ acts on $\R^\ell$ by $\vartheta((x_1, x_2, \ldots , x_\ell)) = (-x_\ell, -x_{\ell -1}, \ldots , -x_2, -x_1)$.  Thus, the fixed points in $V$ are 
$$(x_1, x_2, x_3 , \ldots  , -x_3, -x_2, -x_1).$$
If $\ell$ is odd, then the $((\ell+1)/2)^{\text{th}}$ coordinate must be zero.  In either case, we can restrict our attention to the first $\lfloor \ell /2 \rfloor$ coordinates and identify $V^{\vartheta}$ with $\R^{\lfloor \ell /2 \rfloor}$.

\subsubsection{How to create Kac diagrams for \texorpdfstring{$\lsup{2}A_{\ell-1}$ with $\ell = 2n \geq 4$}{kac 2A even}}
Suppose $\ell = 2n$ is even.  

We have $f = 2$ and $V^\vartheta  = \R^n$. We can take the simple roots $\gamma$ in $\Delta_\vartheta$ to be the roots $\gamma_k = \res_{V^\vartheta} 2\alpha_k = 2(x_k - x_{k+1})$ for $1 \leq k < n$ and $\gamma_n = \res_{V^\vartheta} \alpha_n = 
2x_n$.   
The fundamental coweights with respect to our basis are
$\check{\mu}_i = \frac{1}{2}(e_1 + e_2 + \cdots + e_i)$ for $1 \leq i \leq n$. 
 For $n\geq 3$, the affine Dynkin diagram is 
\[
\begin{tikzpicture}
\begin{scope}[start chain]
\unode{1}
\unode{2}
\unode{3}
\dydots
\unode{n - 2}
\unode{n - 1}
\unodenj{n}
\path (chain-6) -- node{\(\Leftarrow\)} (chain-7);
\end{scope}
\begin{scope}[start chain=br going below]
\chainin(chain-2);
\dnodebr{0};
\end{scope}
\end{tikzpicture}
\]
 and the $b_\gamma$ are given by the diagram
\[
\begin{tikzpicture}
\begin{scope}[start chain]
\enode{1}
\enode{2}
\enode{2}
\enode{2}
\dydots
\enode{2}
\enode{2}
\enodenj{1}
\path (chain-7) -- node{\(\Leftarrow\)} (chain-8);
\end{scope}
\begin{scope}[start chain=br going below]
\chainin(chain-2);
\enodebr{1};
\end{scope}
\end{tikzpicture}
\]
If $n = 2$, then  $A_3 \cong D_3$ with $\vartheta$ acting by an automorphism of order two.  In this case the affine Dynkin diagram is 
\[
\begin{tikzpicture}[start chain]
\unode{0}
\unodenj{2}
\unodenj{1}
\path (chain-1) -- node[anchor=mid] {\(\Leftarrow\)} (chain-2);
\path (chain-2) -- node[anchor=mid] {\(\Rightarrow\)} (chain-3);
\end{tikzpicture}
\]
and the $b_\gamma$ are given by the diagram
\[
\begin{tikzpicture}[start chain]
\enode{1}
\enodenj{1}
\enodenj{1}
\path (chain-1) -- node[anchor=mid] {\(\Leftarrow\)} (chain-2);
\path (chain-2) -- node[anchor=mid] {\(\Rightarrow\)} (chain-3);
\end{tikzpicture}
\]

Since each $\ell_\nu$ is odd and $\ell = 2n$ is even, we must have that $\mu$ is even.  Let $m = 2 \lcm(\ell_1, \ell_2, \ldots , \ell_{\mu})$, and let $\xi$ be a primitive $m^{\text{th}}$ root of unity.  Create a length $n$ list of positive odd integers  by appending  $\mu/2$ copies of $m/2$ to the list $( (2a_\nu -1)m/2 \ell_\nu \, | \, 1 \leq \nu \leq \mu, \, 1 \leq a_\nu < \ell_\nu/2 )$.  Place the items on this length $n$ list into decreasing order: $(\scoeff_1 \geq \scoeff_2 \geq \cdots \geq \scoeff_n)$.  We fix a square root $\xi^{1/2}$ of $\xi$ and consider the diagonal matrix 
$$d_{\vec{\ell}} = \Diag( \xi^{\scoeff_1/2},   \xi^{\scoeff_2/2}, \ldots ,  \xi^{\scoeff_n/2},   \xi^{-\scoeff_n/2}, \xi^{-\scoeff_{n -1}/2},  \ldots , \xi^{-\scoeff_{1}/2}).$$
Note that $d_{\vec{\ell}}$ belongs to $A^\vartheta$.

In Lemma~\ref{lem:oddcase} we prove that  $d_{\vec{\ell}}$ is $\vartheta$-conjugate to $n_{\vec{\ell}}$.  Assuming this, 
we now read off the Kac diagram for $n_{\vec{\ell}}$ from $d_{\vec{\ell}}$. 
 Note that $d_{\vec{\ell}} = \lambda_{\vec{\ell}}(\xi)$ where 
$$\lambda_{\vec{\ell}} =  (\scoeff_1 - \scoeff_2) \check{\mu}_1 + ( \scoeff_2 - \scoeff_3 ) \check{\mu}_2  +  \ldots  + ( \scoeff_{n-1} - \scoeff_{n} ) \check{\mu}_{n-1} + \scoeff_n \check{\mu}_{n}.$$
Since $\dabs{\gamma_n}_\vartheta =1 $, $\dabs{\gamma_k}_\vartheta = 2$ for $1 \leq k < n$, and 
$$\lambda_{\vec{\ell}}/m = 
\frac{1}{m} [
2 \cdot ({\scoeff_{1} - \scoeff_{2}})/2 \cdot \check{\mu}_1 + 2 \cdot ({\scoeff_{2} - \scoeff_{3}})/2 \cdot \check{\mu}_2  + \ldots + 2 \cdot ({\scoeff_{n-1} - \scoeff_{n}}) /2  \cdot \check{\mu}_{n -1} +    \scoeff_n \cdot \check{\mu}_{n}],$$
from Equation~\ref{equ:equationxg} we conclude that
$s_{\gamma_n} = \scoeff_{n}$ and $s_{\gamma_k} = (\scoeff_k-\scoeff_{k+1})/2$ for  $1 \leq k \leq n-1$.

 Using Equation~\ref{equ:equationform} we have that 
the coefficient $s_{\gamma_0}$ is $(m-(\sigma_1 +\sigma_2))/2$.
Remove any factors that are common to all of the $s_\gamma$ for $\gamma \in \tilde{\Delta}$ and label the extended affine Dynkin diagram with the resulting $s_\gamma$.

We have proved:
\begin{lemma}
Fix a partition $\vec{\ell} = (\ell_\mu, \ell_{\mu-1}, \ldots , \ell_2, \ell_1)$ of $\ell = 2n$ with $\ell_\nu$ odd for $1 \leq \nu \leq \mu$ and $n \geq 2$.  Let $m = 2\lcm(\ell_1, \ell_2, \ldots , \ell_\mu)$.  Append  $\mu/2$  copies of $m/2$ to the list
$( (2a_{\nu}-1)m/2 \ell_\nu \, | \, \text{$1 \leq \nu \leq \mu$  and $ 1 \leq a_\nu < \ell_\nu/2$} )$
and then place the elements of the resulting list in decreasing order: 
$(\scoeff_1 \geq \scoeff_2 \geq \scoeff_3 \geq \cdots \geq \scoeff_{n-1} \geq \scoeff_n)$.
After removing any factors that are common to all of the labels, the Kac diagram for   $n_{\vec{\ell}} \rtimes \vartheta$ in a group of type $\lsup{2}A_{2n-1} = \lsup{2}A_{\ell - 1}$ is given by
\[
\begin{tikzpicture}
\begin{scope}[start chain]
\enode{\rotatebox[origin=c]{90}{$(\sigma_2 - \sigma_3)/2$}}
\enode{\rotatebox[origin=c]{90}{$(\sigma_3 - \sigma_4)/2$}}
\enode{\rotatebox[origin=c]{90}{$(\sigma_4 - \sigma_5)/2$}}
\dydots
\enode{\rotatebox[origin=c]{90}{$(\sigma_{n-2} - \sigma_{n-1})/2$}}
\enode{\rotatebox[origin=c]{90}{$(\sigma_{n-1} - \sigma_n)/2$}}
\enodenj{\sigma_n}
\path (chain-6) -- node{\(\Leftarrow\)} (chain-7);
\end{scope}
\begin{scope}[start chain=br going below]
\chainin(chain-1);
\node[nochj,label={below:\elabel{(m - (\sigma_1 + \sigma_2))/2}}] at (-2,0) {};
\end{scope}
\begin{scope}[start chain=br going above]
\chainin(chain-1);
\node[nochj,label={above:\elabel{(\sigma_1 - \sigma_2)/2}}] at (-2,0) {};
\end{scope}
\end{tikzpicture}
\]
when $n>2$ and  by
\[
\begin{tikzpicture}[start chain]
\enode{\rotatebox[origin=c]{90}{$(m - (\sigma_1+\sigma_2))/2$}}
\enodenj{\sigma_2}
\enodenj{\rotatebox[origin=c]{90}{$(\sigma_1 - \sigma_2)/2$}}

\path (chain-1) -- node[anchor=mid] {\(\Leftarrow\)} (chain-2);
\path (chain-2) -- node[anchor=mid] {\(\Rightarrow\)} (chain-3);
\end{tikzpicture}.
\]
when $n = 2$. \qed
\end{lemma}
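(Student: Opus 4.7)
The plan is to identify an explicit $d_{\vec{\ell}} \in A^\vartheta$ that is $\vartheta$-conjugate to $n_{\vec{\ell}}$, and then read off the Kac labels from its torus coordinates, exactly as was done in the preceding untwisted cases. Since $\vartheta$ acts on $V = \R^\ell$ by $(x_1, \ldots, x_\ell) \mapsto (-x_\ell, \ldots, -x_1)$, the $\vartheta$-fixed diagonal subgroup $A^\vartheta$ consists of anti-palindromic matrices, so any candidate $d_{\vec{\ell}}$ must have the form $\Diag(\xi^{\sigma_1/2}, \ldots, \xi^{\sigma_n/2}, \xi^{-\sigma_n/2}, \ldots, \xi^{-\sigma_1/2})$ for a decreasing integer list $\sigma_1 \geq \cdots \geq \sigma_n$.

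The hard step, which I would isolate as a separate Lemma~\ref{lem:oddcase}, is showing that the choice of $\sigma_i$ given in the statement yields an element $\vartheta$-conjugate to $n_{\vec{\ell}}$. My approach is to match the characteristic polynomial of the twisted action $\Ad(d_{\vec{\ell}}) \circ \vartheta$ on the standard $\SL_\ell$-representation against $q_{n_{\vec{\ell}}}$ from Section~\ref{sec:inj2Alminusone}. The $(2a_\nu - 1)m/(2\ell_\nu)$ exponents come from the factor $\prod_{\nu=1}^\mu(t^{\ell_\nu} + 1)$ in the polynomial $p_{\vec{\ell}}$ of Equation~\ref{equ:pl}; the $\mu/2$ extra copies of $m/2$ correspond to the $(-1)$-eigenvectors that survive the $1/(t+1)$ cancellation. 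Uniqueness up to reordering then follows from the elliptic hypothesis combined with the $A$-conjugacy of lifts to $N_G(A)$ already exploited in Section~\ref{sec:inj2Alminusone}.

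Once $d_{\vec{\ell}} = \lambda_{\vec{\ell}}(\xi)$ is in hand, the rest is a telescoping computation. Writing $\lambda_{\vec{\ell}}$ in the basis of fundamental coweights $\check{\mu}_i$ of $G^\vartheta$ gives $\lambda_{\vec{\ell}} = \sum_{k=1}^{n-1}(\sigma_k - \sigma_{k+1})\check{\mu}_k + \sigma_n \check{\mu}_n$, and substituting into Equation~\ref{equ:equationxg} with $f = 2$, $\dabs{\gamma_n}_\vartheta = 1$, and $\dabs{\gamma_k}_\vartheta = 2$ for $k < n$ converts these coefficients directly into the Kac labels displayed in the statement. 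The affine label $s_{\gamma_0} = (m - (\sigma_1 + \sigma_2))/2$ is then forced by the normalization Equation~\ref{equ:equationform}. The low-rank case $n = 2$, where $\lsup{2}A_3 \cong D_3$ and the affine Dynkin diagram collapses to a three-node graph with two arrows, I would treat separately but along the same lines, with the end nodes reinterpreted according to the second displayed diagram. The only genuine obstacle in the whole argument is Lemma~\ref{lem:oddcase}; everything else reduces to bookkeeping on the two normalization equations.
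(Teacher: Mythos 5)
Your proposal follows the paper's proof essentially verbatim: the paper defines $d_{\vec{\ell}}$ by exactly these exponents, defers the assertion that $d_{\vec{\ell}}$ is $\vartheta$-conjugate to $n_{\vec{\ell}}$ to a separate Lemma~\ref{lem:oddcase} (proved by computing the characteristic polynomial of $\Ad(d_{\vec{\ell}})\circ\vartheta$ and isolating the unpaired roots, i.e.\ the polynomial $p_{\vec{\ell}}$ of Equation~\ref{equ:pl}), and then reads off the labels from Equations~\ref{equ:equationxg} and~\ref{equ:equationform} with $f=2$, $\dabs{\gamma_n}_\vartheta=1$, $\dabs{\gamma_k}_\vartheta=2$, exactly as you describe. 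One small correction: the characteristic polynomial being matched is that of $\Ad(d_{\vec{\ell}})\circ\vartheta$ acting on $\sllie_\ell$ (the adjoint module), not on the standard representation of $\SL_\ell$ --- the outer automorphism $\vartheta$ does not act on $\C^\ell$, and $q_{n_{\vec{\ell}}}$ is defined in Section~\ref{sec:inj2Alminusone} as the characteristic polynomial of $X\mapsto\Ad(n_{\vec{\ell}})\vartheta(X)$ on $\sllie_\ell$.
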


\begin{example}
As an example, here is the Kac diagram for $n_{(7,5,5,3)}$ in  (twisted) $\SL_{20}$.
\[
\begin{tikzpicture}
\begin{scope}[start chain]
\enode{0}
\enode{15}
\enode{6}
\enode{0}
\enode{9}
\enode{5}
\enode{7}
\enode{0}
\enode{3}
\enodenj{15}
\path (chain-9) -- node{\(\Leftarrow\)} (chain-10);
\end{scope}
\begin{scope}[start chain=br going below]
\chainin(chain-2);
\enodebr{0};
\end{scope}
\end{tikzpicture}.
\]
\end{example}

\begin{example}
In Figure~ \ref{fig:2A3location} for groups of type $\lsup{2}A_3$ we show the location of the vertices $v_0$, $v_1$, and $v_2$ of the fundamental alcove as well as the points determined by $\lambda_{\vec{\ell}}/m$. The Kac diagram for $(3,1)$ is 
\(
\begin{tikzpicture}[start chain]
\enodenj{1}
\enodenj{1}
\enodenj{1}
\path (chain-1) -- node{\(\Leftarrow\)} (chain-2);
\path (chain-3) -- node{\(\Rightarrow\)} (chain-2);
\end{tikzpicture}
\)
and the Kac diagram for $(1,1,1,1)$ is 
\(
\begin{tikzpicture}[start chain]
\enodenj{0}
\enodenj{1}
\enodenj{0}
\path (chain-1) -- node{\(\Leftarrow\)} (chain-2);
\path (chain-3) -- node{\(\Rightarrow\)} (chain-2);
\end{tikzpicture}.
\)
\begin{figure}[ht]
\centering
\begin{tikzpicture}
\draw (0,0) 
  -- (5,0)
  -- (5,5)
  -- cycle;
  \draw[black, fill=white] (0,0) circle (.5ex);
\draw(-.3,-.4) node[anchor=south]{$v_0$};
 \draw[black,fill=white] (5,5) circle (.5ex);
\draw(5.3,4.9) node[anchor=south]{$v_1$};
\draw[black,fill=black] (3.33,1.66) circle (.5ex);
\draw(3.83,1.66) node[anchor=north]{$\frac{\lambda_{(3,1)}}{6}$};
  \draw[black,fill=black] (5,0) circle (.5ex);
\draw(6.15,.38) node[anchor=north]{$v_2 = \frac{\lambda_{(1,1,1,1)}}{2}$};
\end{tikzpicture}
\caption{The location of points determined by  $\lambda_{\vec{\ell}}$ for groups of type $\lsup{2}A_3$ \label{fig:2A3location}}
\end{figure}
\end{example}

 The results of this section agree with those found in the table in~\cite[Section A.2]{reeder:thomae}; the partitions (in the notation of \cite[Section A.2]{reeder:thomae}) for the Kac diagrams appearing there are (from top to bottom): $(2n-1,1)$, $(1,1,1, \ldots , 1,1)$, $(n,n,1)$ for odd $n$, $(d,d,d,   \ldots , d, d,1)$ with $d$ odd appearing $2k-1$ times where  $d \cdot (2k-1) + 1 =2n$, and   $(d,d,d,d, \ldots , d, d )$  with $d$ odd and appearing $2k$ times where $kd = n$. This correspondence between Kac diagrams and partitions agrees with~\cite[Table~11]{reederetal:gradings}.        From Remark~\ref{rem:regell2Al} these partitions correspond to the regular $\vartheta$-elliptic  elements in a twisted Weyl group of type $\lsup{2}A_{2n-1}$.

\subsubsection{A proof of \texorpdfstring{Lemma~\ref{lem:oddcase}}{lemma odd}}  \label{sec:oddcase}

Recall that $\ell = 2n$, so we are looking at $\lsup{2}A_{2n-1}$.

The proof involves a bit of notation.  For $1 \leq i,k \leq \ell$ let $E(i,j)$ denote the $\ell \times \ell$ matrix in $\gllie_{\ell}$ such that $E(i,j)_{rc} = \delta_{ir} \delta_{jc}$.
If either $i \leq n$ and $j > n$ or $i > n$ and $j \leq n$, then
$\vartheta(E_{ij}) =  E_{\ell-j+1, \ell-i +1}$.  Otherwise, we have $\vartheta(E_{ij}) = - E_{\ell-j+1, \ell-i +1}$.

\begin{lemma} \label{lem:oddcasechar}
The characteristic polynomial of 
$\Ad(d_{\vec{\ell}}) \circ \vartheta$ acting on $\sllie_\ell$ is 
$$   q_{n_{\vec{\ell}}}(t) =  \frac{1}{t+1} \cdot
 \left( \prod_{\nu=1}^\mu (t^{\ell_\nu} + 1) \right) 
 \cdot  \left(    \prod_{\tau=1}^{\mu} (t^{2\ell_\tau} - 1)^{ (\ell_\tau-1)/2} \right) 
 \cdot \left(    \prod_{1 \leq \rho < \varphi \leq \mu} (t^{2 \lcm(\ell_\rho,\ell_\varphi)} - 1)^{ \gcd(\ell_\rho,\ell_\varphi)} \right). $$
\end{lemma}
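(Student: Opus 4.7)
The plan is to reduce to a direct computation on $\gllie_\ell$ and recover the characteristic polynomial on $\sllie_\ell$ by dividing out the contribution of the center $\C I$, on which $\Ad(d_{\vec{\ell}})\circ\vartheta$ acts as $-1$ (since $\vartheta(I)=-JIJ^{-1}=-I$); this produces the $(t+1)$ factor in the denominator. Writing $\eta=\xi^{1/2}$ so that $d_{\vec{\ell}}=\Diag(\eta^{\delta_1},\dots,\eta^{\delta_\ell})$ with $\delta_i=\sigma_i$ for $i\le n$ and $\delta_i=-\sigma_{\ell-i+1}$ for $i>n$, a direct computation shows that $\Ad(d_{\vec{\ell}})\circ\vartheta$ sends $E(i,j)$ to $\pm\eta^{\delta_i-\delta_j}E(\ell-j+1,\ell-i+1)$, so it preserves the orbit decomposition of the basis $\{E(i,j)\}$ under the involution $(i,j)\mapsto(\ell-j+1,\ell-i+1)$.

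The orbits of size one are exactly the pairs with $i+j=\ell+1$, giving $\ell$ such orbits (no $i=j$ case arises, since $\ell=2n$ is even); the remaining pairs form $(\ell^2-\ell)/2$ orbits of size two. A case check shows that the two sign choices appearing on a size-two orbit always multiply to $+1$ (because $i,j$ lie on the same side of $n$ if and only if $j'=\ell-j+1$ and $i'=\ell-i+1$ do), so each size-two orbit contributes a quadratic factor $t^{2}-\xi^{\delta_i-\delta_j}$ and each size-one orbit a linear factor $t-\xi^{\delta_i}$. The size-one product collapses to $\prod_{i=1}^{\ell}(t-\xi^{\delta_i})$, and by direct inspection the multiset $\{\xi^{\delta_i}\}_{i=1}^{\ell}$ is precisely the set of roots (with multiplicity) of $\prod_\nu(t^{\ell_\nu}+1)$, so the size-one product equals $\prod_\nu(t^{\ell_\nu}+1)$.

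For the size-two product $P(t)$, the key step is to compute $P(t)^2$ by summing over ordered orbit representatives and using the identity
\[
\prod_{(i,j)\in\{1,\dots,\ell\}^{2}}\bigl(t^{2}-\xi^{\delta_i-\delta_j}\bigr)\;=\;\prod_{\nu,\rho=1}^{\mu}\bigl(1-t^{2\lcm(\ell_\nu,\ell_\rho)}\bigr)^{\gcd(\ell_\nu,\ell_\rho)}.
\]
I will establish this by setting $r_j=\xi^{\delta_j}$ and writing $\prod_i(t^{2}-r_i/r_j)=r_j^{-\ell}\prod_\nu\bigl((t^{2}r_j)^{\ell_\nu}+1\bigr)$ via $\prod_i(x-r_i)=\prod_\nu(x^{\ell_\nu}+1)$; next, observe that the $\ell_\nu$-th power map sends the $\ell_\rho$ roots of $x^{\ell_\rho}+1$ onto the $\ell_\rho/\gcd(\ell_\nu,\ell_\rho)$ roots of $x^{\ell_\rho/\gcd(\ell_\nu,\ell_\rho)}+1$, each fiber of size $\gcd(\ell_\nu,\ell_\rho)$; finally apply the elementary identity $\prod_{s^{k}=-1}(us+1)=1+(-1)^{k}u^{k}$ with $k=\ell_\rho/\gcd$ odd. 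Dividing by the $i=j$ contribution $(t^{2}-1)^{\ell}$ and the $i+j=\ell+1$ contribution $\prod_\nu(1-t^{2\ell_\nu})$ (computed as $f(t)f(-t)$ with $f(t)=\prod_\nu(t^{\ell_\nu}+1)$) exhibits $P(t)^{2}$ as a perfect square whose monic square root is
\[
P(t)=\prod_\tau(t^{2\ell_\tau}-1)^{(\ell_\tau-1)/2}\prod_{\rho<\varphi}(t^{2\lcm(\ell_\rho,\ell_\varphi)}-1)^{\gcd(\ell_\rho,\ell_\varphi)}.
\]
Multiplying the size-one and size-two contributions and dividing by $(t+1)$ then yields the claimed formula.

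The main obstacle will be the number-theoretic bookkeeping in the size-two step: verifying the fiber size $\gcd(\ell_\nu,\ell_\rho)$ of the $\ell_\nu$-th power map on roots of $x^{\ell_\rho}+1$, and controlling signs throughout (using that $\mu$ is even, since $\ell=2n$ is a sum of $\mu$ odd numbers) so that the resulting expression is a genuine perfect square and its monic square root is unambiguously $P(t)$.
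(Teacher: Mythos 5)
Your overall strategy is sound and genuinely different from the paper's in the hard step: the paper decomposes $\gllie_\ell$ into explicit $(\Ad(d'_{\vec{\ell}})\circ\vartheta)$-invariant blocks ($U_a$, $U_d$, the $U_\tau$, the $U_{\rho\varphi}$) and computes each characteristic polynomial by counting orbits of a twisted Coxeter element on roots, whereas you compute the entire ``size-two'' contribution at once by squaring and invoking a resultant-type identity over all ordered pairs. Your reduction to $\gllie_\ell$ via the center, the sign analysis showing each size-two orbit contributes $t^2-\xi^{\delta_i-\delta_j}$, the identification of the anti-diagonal product with $\prod_\nu(t^{\ell_\nu}+1)$, and the identity $\prod_{(i,j)}(t^2-\xi^{\delta_i-\delta_j})=\prod_{\nu,\rho}(1-t^{2\lcm(\ell_\nu,\ell_\rho)})^{\gcd(\ell_\nu,\ell_\rho)}$ (using that $\mu$ is even so $\prod_j r_j=1$) all check out.

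However, there is a concrete error in the extraction of $P(t)$: you must \emph{not} divide by the ``$i=j$ contribution'' $(t^2-1)^\ell$. As you yourself observe, since $\ell=2n$ is even no pair $(i,i)$ is fixed by the involution $(i,j)\mapsto(\ell-j+1,\ell-i+1)$; the $\ell$ diagonal pairs therefore form $n$ size-two orbits, and their contribution $(t^2-1)^n$ is already part of $P(t)$. The correct identity is
$$P(t)^2=\frac{\prod_{(i,j)}(t^2-\xi^{\delta_i-\delta_j})}{\prod_\nu(1-t^{2\ell_\nu})},$$
dividing only by the $\ell$ anti-diagonal (size-one) factors. With your extra division the quotient is $P(t)^2/(t^2-1)^\ell$, whose monic square root is not the claimed polynomial: for $\vec{\ell}=(3,1)$, $\ell=4$, one gets $(t^6-1)^4/(t^2-1)^4$, with square root $(t^4+t^2+1)^2$ rather than the required $(t^6-1)^2$, and the assembled answer would then be off by $(t^2-1)^n$. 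Dropping that single division repairs the argument; a quick degree count ($\deg P=\ell^2-\ell$, matching the $(\ell^2-\ell)/2$ quadratic factors) confirms the bookkeeping.
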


\begin{proof}
We have that $\ell = 2n$,  $m = 2 \lcm (\ell_1, \ell_2, \ldots, \ell_\mu)$, and $\xi$ is a primitive $m^{\text{th}}$ root of unity. We first define a diagonal element $d'_{\vec{\ell}}$ that has the same entries as $d_{\vec{\ell}}$, but is more amenable to computation.

\begin{equation*}
\begin{split}
d'_{\vec{\ell}} =\Diag(\xi&^{(\ell_\mu -2)(m/4 \ell_\mu)} ,   \xi^{(\ell_\mu -4)(m/4 \ell_\mu)} , \ldots, \xi^{3(m/4 \ell_\mu)}, \xi^{m/4 \ell_\mu},\\
&\xi^{(\ell_{\mu-1} -2)(m/4 \ell_{\mu-1})} ,   \xi^{(\ell_{\mu-1} -4)(m/4 \ell_{\mu-1})} , \ldots, \xi^{3(m/4 \ell_{\mu-1})}, \xi^{m/4 \ell_{\mu-1}},\\
& \ldots ,\\
&\xi^{(\ell_1 -2)(m/4 \ell_1)} , \xi^{(\ell_1 -4)(m/4 \ell_1)}, \ldots , \xi^{3(m/4 \ell_1)}, \xi^{m/4 \ell_1}, \\
& \xi^{m/4}, \cdots , \xi^{m/4}, \xi^{-m/4}, \ldots, \xi^{-m/4}, \\
& \xi^{-m/4 \ell_1}, \xi^{-3(m/4 \ell_1)}, \ldots , \xi^{-(\ell_1 -4)(m/4 \ell_1)} , \xi^{-(\ell_1 -2)(m/4 \ell_1)}, \\
& \ldots ,\\
&\xi^{-m/4 \ell_{\mu-1} }, \xi^{-3(m/4 \ell_{\mu-1})}, \ldots, \xi^{-(\ell_{\mu-1} -4)(m/4 \ell_{\mu-1})} , \xi^{-(\ell_{\mu-1} -2)(m/4 \ell_{\mu-1})}, \\
&\xi^{-m/4 \ell_{\mu} }, \xi^{-3(m/4 \ell_{\mu})}, \ldots, \xi^{-(\ell_{\mu} -4)(m/4 \ell_\mu)} , \xi^{-(\ell_{\mu} -2)(m/4 \ell_{\mu})}).
\end{split}
\end{equation*}
In the middle of the matrix there are $\mu/2$ copies of $\xi^{m/4}$ followed by $\mu/2$ copies of $\xi^{-m/4}$.
Since $d'_{\vec{\ell}}$ is $W^\vartheta$-conjugate to $d_{\vec{\ell}}$, it is enough to compute the characteristic polynomial of $\Ad(d'_{\vec{\ell}}) \circ \vartheta$ acting on $\sllie_\ell$.  Thus, it is enough to show that the characteristic polynomial of $\Ad(d'_{\vec{\ell}}) \circ \vartheta$ acting on $\gllie_\ell$ is
$$  (t+1) \cdot q_{n_{\vec{\ell}}}(t) =  
 \left( \prod_{\nu=1}^\mu (t^{\ell_\nu} + 1) \right) 
 \cdot  \left(    \prod_{\tau=1}^{\mu} (t^{2\ell_\tau} - 1)^{ (\ell_\tau-1)/2} \right) 
 \cdot \left(    \prod_{1 \leq \rho < \varphi \leq \mu} (t^{2 \lcm(\ell_\rho,\ell_\varphi)} - 1)^{ \gcd(\ell_\rho,\ell_\varphi)} \right). $$

For $1 \leq i \leq \ell$, define $d'_i$ by $(d'_{\vec{\ell}})_{ii} = \xi^{d'_i}$.    That is, the power of $\xi$ appearing in the $i^{\text{th}}$ diagonal element of $d'_{\vec{\ell}}$ is $d'_i$.   Note that $d_i' = - d_{\ell - i + 1}'$. 

For $1 \leq i \leq \ell$  we have $(\Ad(d'_{\vec{\ell}}) \circ \vartheta ) E(i, \ell-i+1) =  \xi^{2d'_i} E(i, \ell-i+1)$.  Thus, the vector space $U_a$ of anti-diagonal elements in $\gllie_\ell$ is $(\Ad(d'_{\vec{\ell}}) \circ \vartheta)$-stable and the characteristic polynomial for the action of $\Ad(d'_{\vec{\ell}}) \circ \vartheta$ on $U_a$ is 
$$\prod_{i=1}^{\ell} (t - \xi^{2d'_i}) =  (t - \xi^{m/2})^{\mu}  \cdot  \prod_{\nu = 1}^{\mu} \prod_{i=1}^{(\ell_\nu-1)/2}(t - \xi^{(2i-1)(m/2 \ell_\nu)}) (t - \xi^{-(2i-1)(m/2 \ell_\nu)})  =   \prod_{\nu = 1}^{\mu} (t^{\ell_\nu} + 1).$$

For $1 \leq i \leq n- \mu/2$  we have $(\Ad(d'_{\vec{\ell}}) \circ \vartheta) E(i,i) = -E(\ell -i + 1, \ell-i+1)$ and $(\Ad(d'_{\vec{\ell}}) \circ \vartheta) E(\ell -i + 1, \ell-i+1) = -E(i,i)$.  Thus, the  vector space $U_d$  spanned by
$$(E(i,i), E(\ell -i + 1, \ell-i+1) \, | \, 1 \leq i \leq n - \mu/2)$$
in $\gllie_\ell$ is  $(\Ad(d'_{\vec{\ell}}) \circ \vartheta)$-stable, and the characteristic polynomial for the action of $\Ad(d'_{\vec{\ell}}) \circ \vartheta$ on $U_d$ is $(t^2-1)^{n-\mu/2}$.

For pairs $(i,j)$ where $1 \leq i \leq n$ and $i < j \leq \ell - i $ we have 
$$(\Ad(d'_{\vec{\ell}}) \circ \vartheta ) E(i, j) = \pm \xi^{d'_i - d'_j} E(\ell - j + 1, \ell-i+1)$$
and 
$$(\Ad(d'_{\vec{\ell}}) \circ \vartheta ) E(\ell - j + 1, \ell-i+1) = \pm \xi^{d'_i - d'_j} E(i,j)$$
where the sign is positive if $j>n$ and is negative otherwise.
Similarly, for pairs $(i,j)$ where $1 \leq j \leq n$ and $j < i \leq \ell - j $ we have 
$$(\Ad(d'_{\vec{\ell}}) \circ \vartheta ) E(i, j) = \pm \xi^{d'_i - d'_j} E(\ell - j + 1, \ell-i+1)$$
and 
$$(\Ad(d'_{\vec{\ell}}) \circ \vartheta ) E(\ell - j + 1, \ell-i+1) = \pm \xi^{d'_i - d'_j} E(i,j)$$
where the sign is positive if $i>n$ and is negative otherwise.  Let  $\mathcal{P}$ denote the set of pairs $(i,j)$ such that (exactly) one of the following is true:
\begin{itemize}
\item $1 \leq i  \leq n$ and $i < j \leq \ell - i $, 
\item $1 \leq j  \leq n$ and $j < i \leq \ell - j$, or
\item $ i = j$ and $n-\mu/2 < i \leq n$.
\end{itemize}
Then, $U$, the span of the elements 
$$(   E(i, j), E(\ell - j + 1, \ell-i+1)  \, | \, \text{ $(i,j) \in \mathcal{P}$ } ),$$
is $(\Ad(d'_{\vec{\ell}}) \circ \vartheta)$-stable since the span of any pair $( E(i, j), E(\ell - j + 1, \ell-i+1))$ occurring in the definition of $U$ is  $(\Ad(d'_{\vec{\ell}}) \circ \vartheta)$-stable.  Moreover, $U$ is a vector space complement in $\gllie_\ell$ to the vector subspace $U_a \oplus U_d$ of  $\gllie_\ell$. 
We are going to break $U$ into $\mu(\mu+1)/2$ smaller $(\Ad(d'_{\vec{\ell}}) \circ \vartheta)$-stable subspaces, and evaluate the characteristic polynomial of 
the action of $\Ad(d'_{\vec{\ell}}) \circ \vartheta$ on each of these smaller spaces.

For $0 \leq \nu \leq \mu$, define $\tilde{\ell}_\nu = (\ell_\nu -1)/2$ and $\ell''_\nu = \sum_{j=\nu+1}^\mu \tilde{\ell}_j$.  Note that $\ell''_0 = n - \mu/2$ and $\ell''_\mu = 0$.  For $1 \leq \nu \leq \mu$, define the length $\ell_\nu$ lists 
$$S^r_\nu = (\ell''_{\nu} +1, \ell''_{\nu} +2, \ldots ,  \ell''_{\nu} + \tilde{\ell}_{\nu},  n + \mu/2 - \nu + 1, \ell - (\tilde{\ell}_\nu + \ell''_\nu)+ 1, \ell - (\tilde{\ell}_\nu + \ell''_\nu)+ 2, \ldots, \ell - \ell''_{\nu})  $$
and
$$S^c_\nu = (\ell''_{\nu} +1, \ell''_{\nu} +2, \ldots ,  \ell''_{\nu} + \tilde{\ell}_{\nu},  n - \mu/2 + \nu, \ell - (\tilde{\ell}_\nu + \ell''_\nu) +1, \ell - (\tilde{\ell}_\nu + \ell''_\nu)+ 2, \ldots, \ell - \ell''_{\nu}).$$
Note that $S^r_\nu$ and $S^c_\nu$ differ only at the middle element.
For $1 \leq \tau \leq \mu$ let $U_\tau$ denote the subspace of $U$ spanned by  
$$(   E(i, j), E(\ell - j + 1, \ell-i+1)  \, | \,  \text{ $(i,j) \in  ( S_\tau^r \times S_\tau^c) \cap \mathcal{P}$ }).$$
For $1 \leq \rho <  \varphi \leq \mu$ let $U_{\rho \varphi}$ denote the subspace of $U$ spanned by  
$$(   E(i, j), E(\ell - j + 1, \ell-i+1)  \, | \,  \text{ $(i,j) \in  (S_\rho^r \times S_\tau^c) \cap \mathcal{P} $}).$$
We have the decomposition 
$$U = \left(\bigoplus_{\tau = 1}^\mu U_\tau \right) \oplus \left( \bigoplus_{1 \leq \rho < \varphi \leq \mu} U_{\rho \varphi} \right)$$
of $U$ into $(\Ad(d'_{\vec{\ell}}) \circ \vartheta)$-invariant subspaces,
and we will now compute the characteristic polynomial of $\Ad(d'_{\vec{\ell}}) \circ \vartheta$ on each of these subspaces.

Fix $\tau$ with $1 \leq \tau \leq \mu$.  By examining ``diagonals'' that are parallel to the main diagonal, we see that the  characteristic polynomial for the action of $\Ad(d'_{\vec{\ell}}) \circ \vartheta$ on $U_\tau$ is 
$$   \prod_{k=1}^{\ell_{\tau}-1} (t^2 - \xi^{k (m/\ell_\tau)})^{(\ell_\tau  - 1)/2} = \left(\frac{(t^{2 \ell_\tau} -1)}{t^2-1} \right)^{(\ell_\tau  - 1)/2}$$

Now fix a pair $(\rho, \varphi)$ with $1 \leq \rho < \varphi \leq \mu$.
For $(i,j) \in \mathcal{P} \cap (S^r_\rho \times S^c_\varphi)$, we have that the characteristic polynomial for the action of $\Ad(d'_{\vec{\ell}}) \circ \vartheta$  on the span of the pair $(E(i, j), E(\ell - j + 1, \ell-i+1))$ is $t^2 - \xi^{2(d'_i - d'_j)}$.  Since the map $(i,j) \mapsto \xi^{2(d'_i - d'_j)}$ from $\mathcal{P} \cap (S^r_\rho \times S^c_\varphi)$ to $\C^\times$ has both (a) fibers of the same cardinality and (b) the  same image as the map $\mu_{\ell_\rho} \times \mu_{\ell_\varphi}   \rightarrow \C^\times$ that sends $(a,b)$ to $a \cdot b\inv$, we conclude that the characteristic polynomial for the action of $\Ad(d'_{\vec{\ell}}) \circ \vartheta$  on $U_{\rho \varphi}$ is 
$$ \prod_{\zeta \in \mu_{\ell_\rho}, \eta \in \mu_{\ell_\varphi}} (t^2 - (\zeta \mu)) = \left( \prod_{\delta \in \mu_{\lcm(\ell_\rho,\ell_\varphi)}} (t^2 - \delta) \right)^{\gcd(\ell_\rho,\ell_\varphi)} = (t^{2\lcm(\ell_\rho, \ell_\varphi)} - 1)^{\gcd(\ell_\rho, \ell_\varphi)}.$$

We now put it all together.
As an $\Ad(d'_{\vec{\ell}}) \circ \vartheta$-module, we have
$$\gllie_\ell = U_a \oplus U_d \oplus \left(\bigoplus_{\tau = 1}^\mu U_\tau \right) \oplus \left( \bigoplus_{1 \leq \rho < \varphi \leq \mu} U_{\rho \varphi} \right).$$
Thus, the characteristic polynomial for the action of $\Ad(d'_{\vec{\ell}}) \circ \vartheta$ on $\gllie_{\ell}$ is
\begin{equation*}
 \left( \prod_{\nu = 1}^{\mu} (t^{\ell_\nu} + 1)  \right) \cdot (t^2-1)^{n-\mu/2}
\cdot  \left( \prod_{\tau = 1}^{\mu} \left(\frac{(t^{2 \ell_\tau} -1)}{t^2-1} \right)^{(\ell_\tau  - 1)/2} \right) \cdot  \left( \prod_{{1 \leq \rho < \varphi \leq \mu}} (t^{2\lcm(\ell_\rho, \ell_\varphi)} - 1)^{\gcd(\ell_\rho, \ell_\varphi)} \right)
\end{equation*}
which, after simplifying, is $(t+1) \cdot q_{n_{\vec{\ell}}}$.
\end{proof}

\begin{lemma} \label{lem:oddcase}
$d_{\vec{\ell}}$ is $\vartheta$-conjugate to $n_{\vec{\ell}}$.
\end{lemma}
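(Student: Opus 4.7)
The plan is to combine Lemma~\ref{lem:sigmaconj} with the characteristic polynomial invariant supplied by Lemma~\ref{lem:oddcasechar}.  First, I apply Lemma~\ref{lem:sigmaconj} to produce an element $a \in A^{\vartheta}$ with $n_{\vec{\ell}} \sim_{\vartheta} a$ in $G$.  Since $d_{\vec{\ell}} \in A^{\vartheta}$ by construction, the problem reduces to showing that $a$ and $d_{\vec{\ell}}$ are $\vartheta$-conjugate in $G$.

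The main invariant I plan to use is the characteristic polynomial of $\Ad(\cdot)\circ \vartheta$ acting on $\sllie_\ell$, which is a $\vartheta$-conjugation invariant since $\vartheta \circ \Ad(g) = \Ad(\vartheta(g)) \circ \vartheta$.  By the polynomial computation that precedes Lemma~\ref{lem:polygivesl}, this polynomial equals $q_{n_{\vec{\ell}}}$ at $n_{\vec{\ell}}$, and by Lemma~\ref{lem:oddcasechar} it equals $q_{n_{\vec{\ell}}}$ at $d_{\vec{\ell}}$.  Hence $\Ad(a)\circ \vartheta$ and $\Ad(d_{\vec{\ell}})\circ \vartheta$ share this same characteristic polynomial.

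To finish, I need to show that among elements of $A^{\vartheta}$ the characteristic polynomial of the twisted adjoint action determines the $\vartheta$-conjugacy class.  My proposal is to extract the multiset of squares $\{a_i^2\}$ from the action on the antidiagonal subspace of $\sllie_\ell$: as in the proof of Lemma~\ref{lem:oddcasechar}, the line through $E(i, \ell-i+1)$ is an eigenline of $\Ad(a)\circ \vartheta$ with eigenvalue $\pm a_i^2$, so these eigenvalues are read off from the characteristic polynomial.  Combined with the defining constraint $a_i \cdot a_{\ell - i + 1} = 1$ for $a \in A^{\vartheta}$, this determines $a$ up to the natural action of the relative Weyl group $W^{\vartheta} = N_G(A)^{\vartheta}/A^{\vartheta}$ on $A^{\vartheta}$.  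Since any $W^{\vartheta}$-action on $A^{\vartheta}$ can be implemented by $\vartheta$-conjugation using $\vartheta$-fixed lifts in $N_G(A)^{\vartheta}$, I would then conclude that $a \sim_{\vartheta} d_{\vec{\ell}}$, finishing the proof.

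The main obstacle I anticipate is the last step: carefully verifying that the anti-diagonal eigenvalue data, together with the sign ambiguities inherent in extracting $a_i$ from $a_i^2$, is exactly absorbed by $W^{\vartheta}$-action on $A^{\vartheta}$.  A secondary concern is ensuring that the lifts in $N_G(A)^{\vartheta}$ required to realize all of $W^{\vartheta}$ exist for the isogeny type $G = \SL_\ell$; if not, one may need to either pass to an isogenous group via Lemma~\ref{lem:isogenyindependent} or argue more directly that the full $\vartheta$-conjugation group acts transitively on elements of $A^{\vartheta}$ sharing this characteristic polynomial.
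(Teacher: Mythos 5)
Your overall strategy --- reduce via Lemma~\ref{lem:sigmaconj} to comparing two elements of $A^{\vartheta}$, then use the characteristic polynomial of the twisted adjoint action as the invariant --- is exactly the strategy of the paper's proof. The gap sits in the step you yourself flag as the main obstacle, and the mechanism you propose for closing it would not work as stated. First, the characteristic polynomial of $\Ad(a)\circ\vartheta$ on $\sllie_\ell$ does not come labelled by the direct-sum decomposition, so you cannot simply ``read off'' the antidiagonal eigenvalues from it; you need the observation (the same trick as in Lemma~\ref{lem:polygivesl}) that every root space off the antidiagonal is swapped with a partner by $\Ad(a)\circ\vartheta$, so each such pair contributes a factor $t^2-c$ whose roots occur in pairs $\{\lambda,-\lambda\}$. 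Hence the antidiagonal contribution $(t+1)\,p_{\vec{\ell}}(t)=\prod_\nu(t^{\ell_\nu}+1)$ is recoverable as the product over the roots of $q_{n_{\vec{\ell}}}$ \emph{not} paired with their negatives, and that is how the multiset $\{\xi^{\pm 2x_i}\}$ is actually extracted. Second, and more seriously, the sign ambiguity in passing from $a_i^2$ to $a_i$ is \emph{not} absorbed by $W^{\vartheta}$: the relative Weyl group (of type $C_n$) permutes and inverts the coordinates of $A^{\vartheta}$ but does not negate them, so $(a_1,\dots,a_n)$ and $(-a_1,a_2,\dots,a_n)$ have the same squares yet are generically in different $W^{\vartheta}$-orbits. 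Relatedly, your intermediate claim that the twisted characteristic polynomial determines the $\vartheta$-conjugacy class within $A^{\vartheta}$ is false as stated: $\Ad(za)\circ\vartheta=\Ad(a)\circ\vartheta$ for $z$ central, but $za$ need not be $\vartheta$-conjugate to $a$ in $\SL_\ell$.

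The paper closes this differently: it writes down the normal form of an element $d'\in A^{\vartheta}$ with $d'\rtimes\vartheta$ of order $m$ --- exponents $x_i\in\tfrac{1}{2}\Z$ with integer pairwise differences satisfying $m/2\ge x_1\ge\cdots\ge x_n>0$ --- computes the full characteristic polynomial of $\Ad(d')\circ\vartheta$ in these coordinates, and matches it against the explicit formula of Lemma~\ref{lem:oddcasechar}; the inequalities on the $x_i$ then leave no room for sign or translation ambiguities and force $x_j=\scoeff_j$. To salvage your version you should replace the appeal to $W^{\vartheta}$ by this arithmetic normalization, and use the full polynomial $q_{n_{\vec{\ell}}}$ (not only its antidiagonal factor) to eliminate the residual alternatives for each exponent.
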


\begin{proof}
Recall that $m$ is even.

If $d'' = \Diag(\varepsilon^{a_n}, \varepsilon^{a_{n-1}}, \ldots  \varepsilon^{a_1}, \varepsilon^{a_{n+1}}, \varepsilon^{a_{n+2}}, \ldots ,\varepsilon^{a_{2n}})$ where $a_j = -a_{2n+1-j}$ for $j\geq n + 1$, then  
the characteristic polynomial of $d'' \circ \vartheta$ acting on $\sllie_{2n}$ is
$$ \frac{(t^2 -1)^n}{t+1} \cdot  \prod_{i = 1}^{n} (t - \varepsilon^{2a_i})(t-\varepsilon^{-2a_i}) \cdot  \prod_{i < j \leq 2n - i} (t^2-\varepsilon^{2(a_i - a_j)}) (t^2-\varepsilon^{-2(a_i - a_j)}).$$

Since the eigenvalues of an order $m$ element must be $m^{\text{th}}$ roots of unity, from the above paragraph we see that 
any element $d' \in A^\vartheta$ for which the order of $d' \rtimes \vartheta$ is $m$ can be chosen to look like 
$$d' = \Diag(\xi^{x_n}, \xi^{x_{n-1}}, \ldots , 
\xi^{x_1}, \xi^{x_{n+1}}, \xi^{x_{n+2}}\ldots \xi^{x_{2n}})$$
where $\xi$ is our fixed  $m^{\text{th}}$ root of unity, $x_j = - x_{2n+1-j}$ for $j \geq n+1$, the $x_i \in \frac{1}{2}\Z$ satisfy $m/2 \geq  x_1 \geq x_2 \geq \cdots \geq x_n > 0$, and $x_i - x_j$ is and integer for $1 \leq i,j \leq n$.   Fix a square root of $\xi$ so that $d'$ can be written unambiguously. We will show that $d' = d_{\vec{\ell}}$.

The characteristic polynomial of $d' \circ \vartheta$ acting on $\sllie_{2n}$ is
$$ \frac{(t^2 -1)^n}{t+1} \cdot  \prod_{i = 1}^{n} (t - \xi^{2x_i})(t-\xi^{-2x_i}) \cdot  \prod_{i < j \leq 2n - i} (t^2-\xi^{2(x_i - x_j)}) (t^2-\xi^{-2(x_i - x_j)}).$$
In the polynomial $q_{n_{\vec{\ell}}}$ every root   is paired with its additive inverse, except for the roots that appear in $p_{\vec{\ell}}$, the polynomial described in Equation~\ref{equ:pl} that is associated to $n_{\vec{\ell}}$. Thus, if $d'$ and $n_{\vec{\ell}}$ are $\vartheta$-conjugate, then we must have
$$\frac{ \prod_{i = 1}^{n} (t - \xi^{2x_i})(t-\xi^{-2x_i})}{t+1} = p_{\vec{\ell}} = \frac{ \prod_{i = 1}^{n} (t - \xi^{2\sigma_i})(t-\xi^{-2\sigma_i})}{t+1}.$$
Thanks to Lemma~\ref{lem:oddcasechar}, this implies that there is a bijective map $f$ from the set $\{1,2,\ldots n \}$ to itself such that $x_j \in \scoeff_{f(j)}/2 + m \Z$ for all $1 \leq j \leq n$.     Since $m/2 \geq  x_1 \geq x_2 \geq \cdots \geq x_n >0$ and $m/2 \geq  \sigma_1 \geq \sigma_2 \geq \cdots \geq \sigma_n > 0$, we conclude that $x_j = \sigma_j$ for $1 \leq j \leq n$.
\end{proof}

\subsubsection{How to create Kac diagrams for \texorpdfstring{$\lsup{2}A_{\ell-1}$ with $\ell = 2n+1 > 2$}{kac for 2A odd}}
Suppose $\ell = 2n+1$ is odd.  

 We can take the simple roots $\gamma$ in $\Delta_\vartheta$ to be the roots $\gamma_k = \res_{V^\vartheta} 2\alpha_k = 2(x_k - x_{k+1})$ for $1 \leq k < n$ and $\gamma_n = \res_{V^\vartheta} 2(\alpha_n + \alpha_{n+1} )= 
4x_n$.   
The projections of the fundamental coweights $\check{\omega}_i$ are then
$\check{\mu}_i = \frac{1}{2} (e_1 + e_2 + \cdots + e_i)$ 
for $1 \leq i \leq n$. 
The affine Dynkin diagram is 
\[
\begin{tikzpicture}[start chain]
\unodenj{0}
\unodenj{1}
\path (chain-1) -- node {\QRightarrow} (chain-2);
\end{tikzpicture}
\]
for $n =2$ and the diagram
\[
\begin{tikzpicture}[start chain]
\unodenj{0}
\unodenj{1}
\unode{2}
\dydots
\unode{n - 2}
\unode{n - 1}
\unodenj{n}
\path (chain-1) -- node{\(\Rightarrow\)} (chain-2);
\path (chain-6) -- node{\(\Rightarrow\)} (chain-7);
\end{tikzpicture}
\]
for $n>2$.
The $b_\gamma$ are given by the diagram
\[
\begin{tikzpicture}[start chain]
\enodenj{1}
\enodenj{2}
\path (chain-1) -- node {\QRightarrow} (chain-2);
\end{tikzpicture}
\]
for $n =2$ and
\[
\begin{tikzpicture}[start chain]
\enodenj{1}
\enodenj{2}
\enode{2}
\dydots
\enode{2}
\enode{2}
\enodenj{2}
\path (chain-1) -- node{\(\Rightarrow\)} (chain-2);
\path (chain-6) -- node{\(\Rightarrow\)} (chain-7);
\end{tikzpicture}
\]
for $n>2$.

Since each $\ell_\nu$ is odd and $\ell= 2n+1$ is odd, we must have that $\mu$ is odd.  Let $m = 2 \lcm(\ell_1, \ell_2, \ldots , \ell_{\mu})$, and let $\xi$ be a primitive $m^{\text{th}}$ root of unity.   Create a length $n$ list of non-negative integers  by appending  $(\mu-1)/2$ copies of $0$ to the list $( (a_\nu m)/ (2 \ell_\nu) \, | \, 1 \leq \nu \leq \mu, \, 1 \leq a_\nu < \ell_\nu/2 )$.  Place the items on this  list into decreasing order: $(\scoeff_1 \geq \scoeff_2 \geq \cdots \geq \scoeff_n)$.  Consider the diagonal matrix 
$$d_{\vec{\ell}} = \Diag( \xi^{\scoeff_1},   \xi^{\scoeff_2}, \ldots ,  \xi^{\scoeff_n}, 1,  \xi^{-\scoeff_n}, \xi^{-\scoeff_{n -1}},  \ldots , \xi^{-\scoeff_{1}}),$$
which is an element of $A^{\vartheta}$.    In Lemma~\ref{lem:evencase} we prove that  $d_{\vec{\ell}}$ is $\vartheta$-conjugate to $n_{\vec{\ell}}$.  Assuming this, 
we now read off the Kac diagram for $n_{\vec{\ell}}$ from $d_{\vec{\ell}}$. 
We have $d_{\vec{\ell}} = \lambda_{\vec{\ell}}(\xi)$ where 
$$\lambda_{\vec{\ell}} =  2(\scoeff_1 - \scoeff_2  ) \check{\mu}_1 + 2( \scoeff_2 - \scoeff_3   ) \check{\mu}_2  +  \ldots  + 2( \scoeff_{n-1} - \scoeff_n  ) \check{\mu}_{n-1} +  2\scoeff_n  \check{\mu}_{n}.$$
Since  $\dabs{\gamma}_\vartheta$ is $2$ for all $\gamma \in \Delta_\vartheta$ and 
$$\lambda_{\vec{\ell}}/m = \frac{1}{m}[
2(\scoeff_1 - \scoeff_2  ) \check{\mu}_1 + 2( \scoeff_2 - \scoeff_3   ) \check{\mu}_2  +  \ldots  + 2( \scoeff_{n-1} - \scoeff_n  ) \check{\mu}_{n-1} +  2\scoeff_n  \check{\mu}_{n}],$$
from Equation~\ref{equ:equationxg} we conclude that 
$s_{\gamma_n} = \scoeff_n$ and
 $s_{\gamma_k} = (\scoeff_k - \scoeff_{k+1})$ for  $1 \leq k \leq n-1$.
From Equation~\ref{equ:equationform}, we conclude $s_{\gamma_0} = m/2 -  2\scoeff_1$.
Remove any factors that are common to all of the $s_\gamma$ for $\gamma \in \tilde{\Delta}$, and label the extended Dynkin diagram with the resulting $s_\gamma$.

We have proved:
\begin{lemma}
Fix a partition $\vec{\ell} = (\ell_\mu, \ell_{\mu-1}, \ldots , \ell_2, \ell_1)$ of $\ell = 2n + 1$ for $n \geq 2$ with $\ell_\nu$ odd for $1 \leq \nu \leq \mu$.  Let $m = 2\lcm(\ell_1, \ell_2, \ldots , \ell_\mu)$.  Append  $(\mu-1)/2$  copies of zero to the list
$( ma_{\nu}/2 \ell_\nu \, | \, \text{$1 \leq \nu \leq  \mu$  and $ 1 \leq a_\nu < \ell_\nu/2$} )$
and then place the elements of the resulting list in decreasing order: 
$(\scoeff_1 \geq \scoeff_2 \geq \scoeff_3 \geq \cdots \geq \scoeff_{n-1} \geq \scoeff_n)$.
After removing any factors that are common to all of the labels, the Kac diagram for   $n_{\vec{\ell}} \rtimes \vartheta$ in a group of type $\lsup{2}A_{2n}$ is given by
\[
\begin{tikzpicture}[start chain]
\enode{\rotatebox[origin=c]{90}{$m/2 - 2\sigma_1$}}
\enodenj{\rotatebox[origin=c]{90}{$\sigma_1 - \sigma_2$}}
\enode{\rotatebox[origin=c]{90}{$\sigma_2 - \sigma_3$}}
\dydots
\enode{\rotatebox[origin=c]{90}{$\sigma_{n-2} - \sigma_{n-1}$}}
\enode{\rotatebox[origin=c]{90}{$\sigma_{n-1} - \sigma_n$}}
\enodenj{\sigma_n}
\path (chain-1) -- node[anchor=mid] {\(\Rightarrow\)} (chain-2);
\path (chain-6) -- node[anchor=mid] {\(\Rightarrow\)} (chain-7);
\end{tikzpicture}
\]
when $n>1$ and  by
\[
\begin{tikzpicture}[start chain]
\enodenj{\rotatebox[origin=c]{90}{$m/2 - 2\sigma_1$}}
\enodenj{\sigma_1}
\path (chain-1) -- node {\QRightarrow} (chain-2);
\end{tikzpicture}
\]
when $n = 1$. \qed
\end{lemma}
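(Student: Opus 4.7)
The plan is to follow the template established in Sections~\ref{sec:kacdiagramAl}--\ref{sec:kacdiagram2Dlpo}: produce a $\vartheta$-fixed diagonal candidate $d_{\vec{\ell}}$, prove that $d_{\vec{\ell}}$ is $\vartheta$-conjugate to $n_{\vec{\ell}}$ (this is the content of the pending Lemma~\ref{lem:evencase}), and then read off the Kac diagram via Equations~(\ref{equ:equationxg}) and~(\ref{equ:equationform}) from the cocharacter $\lambda_{\vec{\ell}}$ with $d_{\vec{\ell}} = \lambda_{\vec{\ell}}(\xi)$. The second step is purely mechanical, so essentially all of the work is in Lemma~\ref{lem:evencase}; once it is in hand, one substitutes $\lambda_{\vec{\ell}} = 2(\sigma_1-\sigma_2)\check{\mu}_1 + \cdots + 2(\sigma_{n-1}-\sigma_n)\check{\mu}_{n-1} + 2\sigma_n\check{\mu}_n$, uses that $|\gamma|_\vartheta = 2$ for every $\gamma \in \Delta_\vartheta$, and extracts $s_{\gamma_k} = \sigma_k - \sigma_{k+1}$ for $1 \leq k < n$, $s_{\gamma_n} = \sigma_n$, and $s_{\gamma_0} = m/2 - 2\sigma_1$ from Equation~(\ref{equ:equationform}).

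To prove Lemma~\ref{lem:evencase}, I would mirror the proof of Lemma~\ref{lem:oddcase} with adjustments for the parity change. First, I would compute the characteristic polynomial of $T := \Ad(d_{\vec{\ell}}) \circ \vartheta$ on $\gllie_\ell$. After replacing $d_{\vec{\ell}}$ by an $\absW^\vartheta$-conjugate $d'_{\vec{\ell}}$ whose diagonal entries are grouped by the parts of $\vec{\ell}$ (with the entries $\xi^{a_\nu m/2\ell_\nu}$ for $1\leq a_\nu<\ell_\nu/2$ and their $\vartheta$-conjugates occupying symmetric positions, the $(\mu-1)/2$ pairs of zero-exponent entries arranged around the middle, and the single $\vartheta$-fixed coordinate sitting at position $n+1$ with value $1$), I would split $\gllie_\ell$ as $U_a \oplus U_d \oplus U_m \oplus \bigoplus_\tau U_\tau \oplus \bigoplus_{\rho<\varphi} U_{\rho\varphi}$, where $U_m$ contains the $T$-invariant middle row and middle column. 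Each piece is $T$-stable. The contributions to the characteristic polynomial are computed exactly as in Lemma~\ref{lem:oddcasechar} by inspecting diagonals parallel to the main and anti-diagonals: the anti-diagonal piece contributes $\prod_\nu (t^{\ell_\nu}+1)$; each $U_\tau$ contributes $\left((t^{2\ell_\tau}-1)/(t^2-1)\right)^{(\ell_\tau-1)/2}$; each mixed block $U_{\rho\varphi}$ contributes $(t^{2\lcm(\ell_\rho,\ell_\varphi)}-1)^{\gcd(\ell_\rho,\ell_\varphi)}$; and $U_d \oplus U_m$ produces the appropriate power of $(t^2-1)$ together with an extra $(t+1)$ that will be divided out when passing from $\gllie_\ell$ to $\sllie_\ell$. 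The upshot is that the characteristic polynomial of $T$ on $\sllie_\ell$ equals the $q_{n_{\vec{\ell}}}$ computed earlier in Section~\ref{sec:inj2Alminusone}.

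Once the characteristic polynomial is pinned down, the uniqueness argument from Lemma~\ref{lem:oddcase} carries over with only minor modifications. Any $d' \in A^\vartheta$ with $d' \rtimes \vartheta$ of order $m$ must look like a diagonal element $\Diag(\xi^{x_1},\ldots,\xi^{x_n},1,\xi^{-x_n},\ldots,\xi^{-x_1})$ with $m/2 \geq x_1 \geq x_2 \geq \cdots \geq x_n \geq 0$ (the middle entry being forced to be $1$ because $\ell$ is odd and $\vartheta$ fixes that coordinate). Matching against the unpaired factor $p_{\vec{\ell}} = \prod_\nu(t^{\ell_\nu}+1)/(t+1)$, which by the argument of Lemma~\ref{lem:polygivesl} is recoverable from $q_{n_{\vec{\ell}}}$ since it is the sole part whose roots are not closed under negation, forces $\{x_j\} = \{\sigma_j\}$ as multisets, and the decreasing condition then gives $x_j = \sigma_j$ for all $j$. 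The main obstacle is the careful bookkeeping around the central coordinate: compared to the $\ell = 2n$ case, one diagonal position is now genuinely $\vartheta$-fixed rather than being paired with another, which alters the dimensions of $U_a$ and $U_d$, forces the introduction of the piece $U_m$, and changes the exact power of $(t+1)$ one must divide out; verifying that all of these small shifts are consistent and reproduce the earlier formula for $q_{n_{\vec{\ell}}}$ will require patience more than new ideas.
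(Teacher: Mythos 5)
Your overall strategy is the one the paper uses: define the $\vartheta$-fixed diagonal candidate $d_{\vec{\ell}} = \Diag(\xi^{\sigma_1},\ldots,\xi^{\sigma_n},1,\xi^{-\sigma_n},\ldots,\xi^{-\sigma_1})$, prove Lemma~\ref{lem:evencase} by first computing the characteristic polynomial of $\Ad(d_{\vec{\ell}})\circ\vartheta$ and then running the uniqueness argument against the unpaired factor $p_{\vec{\ell}}$, and read off the labels from $\lambda_{\vec{\ell}}$ via Equations~(\ref{equ:equationxg}) and~(\ref{equ:equationform}). Your label extraction ($s_{\gamma_k}=\sigma_k-\sigma_{k+1}$ for $1\leq k<n$, $s_{\gamma_n}=\sigma_n$, $s_{\gamma_0}=m/2-2\sigma_1$) and your uniqueness step both match the paper.

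The one concrete defect is in your sketch of the characteristic-polynomial computation. You carve the middle row and column out as a separate $T$-stable block $U_m$ and assert that $U_d\oplus U_m$ contributes only a power of $(t^2-1)$ plus an extra $(t+1)$. That is not what $U_m$ contributes: for $j\neq n+1$ the pair $\bigl(E(n+1,j),\,E(\ell-j+1,n+1)\bigr)$ is $T$-stable with characteristic polynomial $t^2-\xi^{-2d'_j}$, so the full contribution of $U_m$ is $(t^2-1)^{\mu-1}\prod_\nu\frac{t^{2\ell_\nu}-1}{t^2-1}$, which involves nontrivial roots of unity whenever some $\ell_\nu>1$. Worse, if you also keep the blocks $U_\tau$ and $U_{\rho\varphi}$ with the same contributions as in Lemma~\ref{lem:oddcasechar}, you double-count: the degrees then sum to $\ell+(\ell-\mu)+(2\ell-2)+(\ell^2-2\ell+\mu)=\ell^2+2\ell-2$, exceeding $\dim\gllie_\ell=\ell^2$ by exactly $\dim U_m$. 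The paper avoids this by not introducing $U_m$ at all: each index list $S^r_\nu$, $S^c_\nu$ absorbs one position from the band of $\mu$ zero-exponent coordinates (the center $n+1$ going to the part $\nu=(\mu+1)/2$), so the middle row and column are distributed among the $U_\tau$ and $U_{\rho\varphi}$, which then contribute exactly the factors you quote. Either adopt that bookkeeping, or keep $U_m$ but recompute both its contribution and the correspondingly smaller contributions of the other blocks; as written, your decomposition does not close up.
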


\begin{example}
As an example, here is the Kac diagram for $n_{(9,5,5,3,3)}$ in (twisted) $\SL_{25}$
\[
\begin{tikzpicture}[start chain]
\enodenj{5}
\enodenj{2}
\enode{0}
\enode{3}
\enode{0}
\enode{0}
\enode{5}
\enode{1}
\enode{0}
\enode{4}
\enode{5}
\enode{0}
\enodenj{0}
\path (chain-1) -- node{\(\Rightarrow\)} (chain-2);
\path (chain-12) -- node{\(\Rightarrow\)} (chain-13);
\end{tikzpicture}.
\]
\end{example}

\begin{example}
In Figure~ \ref{fig:2A2location} for groups of type $\lsup{2}A_2$ we show the location of the vertices $v_0$ and $v_1$ of the fundamental alcove as well as the points determined by $\lambda_{\vec{\ell}}/m$. The Kac diagram for $(3)$ is 
\(
\begin{tikzpicture}[start chain]
\enodenj{1}
\enodenj{1}
\path (chain-1) -- node {\QRightarrow} (chain-2);
\end{tikzpicture}
\)
and the Kac diagram for $(1,1,1)$ is 
\(
\begin{tikzpicture}[start chain]
\enodenj{1}
\enodenj{0}
\path (chain-1) -- node {\QRightarrow} (chain-2);
\end{tikzpicture}
\)
\begin{figure}[ht]
\centering
\begin{tikzpicture}

\draw (-0.1,0)   -- (5.296,0);
\draw[black,fill=black] (0,0) circle (.5ex);
\draw(.8,-0.8) node[anchor=south]{$v_0 = \frac{\lambda_{(1,1,1)}}{2}$};

\draw[black,fill=white] (5.196,0) circle (.5ex);
\draw(5.496,-0.6) node[anchor=south]{$v_1$};

\draw(3.781,0) node[anchor=north]{$\frac{\lambda_{(3)}}{6}$};
\draw[black,fill=black] (3.481,0) circle (.5ex);

\end{tikzpicture}
\caption{The location of points determined by  $\lambda_{\vec{\ell}}$ for groups of type $\lsup{2}A_2$ \label{fig:2A2location}}
\end{figure}
\end{example}

The results here agree with those found in the tables in~\cite[Section A.1]{reeder:thomae}.
The partitions for the Kac diagrams appearing  in the first table  are (from top to bottom): $(3)$ and $(1,1,1)$. This correspondence between Kac diagrams and partitions agrees with~\cite[Table~9]{reederetal:gradings}.     
The partitions (in the notation of \cite[Section A.1]{reeder:thomae}) for the Kac diagrams appearing  in the second table are (from top to bottom): $(2n+1)$, $(1,1,1 , \dots , 1, 1)$, $(d,d, d, \ldots, d,d)$ where $d$ is odd and appears $2k+1$ times where $d(2k+1) = 2n+1$, and $(d,d,d, \ldots, d,d,1)$ where $d$ is odd and appears $2k$ times where $dk = n$.  This correspondence between Kac diagrams and partitions agrees with~\cite[Table~10]{reederetal:gradings}.      From Remark~\ref{rem:regell2Al} these partitions correspond to the regular $\vartheta$-elliptic  elements in a twisted Weyl group of type $\lsup{2}A_{2n+1}$.

\subsubsection{A proof of \texorpdfstring{Lemma~\ref{lem:evencase}}{lemma even}}

Recall that $\ell = 2n + 1$, so we are looking at $\lsup{2}A_{2n}$.

As in Section~\ref{sec:oddcase} for $1 \leq i,k \leq \ell$ let $E(i,j)$ denote the $\ell \times \ell$ matrix such that $E(i,j)_{rc} = \delta_{ir} \delta_{jc}$.  Unlike in Section~\ref{sec:oddcase}
we have $\vartheta(E_{ij}) =  -E_{\ell-j+1, \ell-i +1}$.

\begin{lemma} \label{lem:evencasechar}
The characteristic polynomial of 
$\Ad(d_{\vec{\ell}}) \circ \vartheta$ acting on $\sllie_\ell$ is 
$$   q_{n_{\vec{\ell}}}(t) =  \frac{1}{t+1} \cdot
 \left( \prod_{\nu=1}^\mu (t^{\ell_\nu} + 1) \right) 
 \cdot  \left(    \prod_{\tau=1}^{\mu} (t^{2\ell_\tau} - 1)^{ (\ell_\tau-1)/2} \right) 
 \cdot \left(    \prod_{1 \leq \rho < \varphi \leq \mu} (t^{2 \lcm(\ell_\rho,\ell_\varphi)} - 1)^{ \gcd(\ell_\rho,\ell_\varphi)} \right). $$
\end{lemma}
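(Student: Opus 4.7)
The plan is to adapt the proof of Lemma~\ref{lem:oddcasechar} to the odd-dimensional case $\ell = 2n+1$. In this setting $\mu$ is necessarily odd (since every $\ell_\nu$ is odd and $\sum \ell_\nu = 2n+1$ is odd), and the $\vartheta$-action on elementary matrices is uniformly $\vartheta(E(i,j)) = -E(\ell-j+1,\ell-i+1)$, unlike the even case in which the sign depends on the position of $(i,j)$.

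First I would replace $d_{\vec{\ell}}$ by an $\absW^\vartheta$-conjugate diagonal matrix $d'_{\vec{\ell}}$ whose entries are grouped into blocks of sizes $\ell_1, \ldots, \ell_\mu$, with the $(\mu-1)/2$ zeroes of the $\sigma$-list playing the role the $\mu/2$ copies of $m/4$ played in the even case, and the central entry $(d'_{\vec{\ell}})_{n+1,n+1}$ fixed at $1$. This reorganization does not affect the characteristic polynomial but makes the block structure transparent. Next I would decompose $\gllie_\ell$ as an $(\Ad(d'_{\vec{\ell}})\circ\vartheta)$-invariant direct sum: (a) the anti-diagonal subspace $U_a$ of dimension $\ell$, on which the operator is diagonal with eigenvalue $-\xi^{2d'_i}$ at $E(i,\ell-i+1)$ (the central entry contributing eigenvalue $-1$); (b) the off-central diagonal subspace $U_d$ of dimension $2n$, spanned by pairs $(E(i,i),E(\ell-i+1,\ell-i+1))$ for $1 \leq i \leq n$, each swapped with sign $-1$; and (c) the intra-part subspaces $U_\tau$ and inter-part subspaces $U_{\rho\varphi}$, defined exactly as in the proof of Lemma~\ref{lem:oddcasechar} but with the index sets $S_\nu^r$ and $S_\nu^c$ adjusted to incorporate the new central position at $i = n+1$.

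The characteristic polynomial on each piece is then computed block by block. For $U_a$, collecting the eigenvalues $-\xi^{2d'_i}$ in block $\nu$ gives the set $\{-\omega^b : 0 \leq b \leq \ell_\nu - 1\}$ for $\omega = \xi^{m/\ell_\nu}$ a primitive $\ell_\nu$-th root of unity, and the identity $\prod_{b=0}^{\ell_\nu - 1}(t+\omega^b) = t^{\ell_\nu}+1$ (valid since $\ell_\nu$ is odd) yields $\prod_{\nu=1}^\mu (t^{\ell_\nu}+1)$ overall. The piece $U_d$ contributes $(t^2-1)^n$. The pieces $U_\tau$ and $U_{\rho\varphi}$ contribute $\bigl((t^{2\ell_\tau}-1)/(t^2-1)\bigr)^{(\ell_\tau-1)/2}$ and $(t^{2\lcm(\ell_\rho,\ell_\varphi)}-1)^{\gcd(\ell_\rho,\ell_\varphi)}$ respectively, by arguments formally identical to those in the proof of Lemma~\ref{lem:oddcasechar}, since the uniform sign change leaves the underlying cycle structure of these pieces unchanged. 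Multiplying all the contributions and dividing by $(t+1)$, which is the characteristic polynomial of $\Ad(d'_{\vec{\ell}})\circ\vartheta$ on the center $\C \cdot I_\ell$ of $\gllie_\ell$ (eigenvalue $-1$), extracts the characteristic polynomial on $\sllie_\ell$ and yields the stated formula for $q_{n_{\vec{\ell}}}$.

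The main obstacle is careful bookkeeping on $U_a$: one must verify that the $\mu$ eigenvalues equal to $-1$ (one from the center $E(n+1,n+1)$ together with the $\mu - 1$ contributions from the doubled padding positions in both halves of the anti-diagonal) distribute exactly one to each of the $\mu$ blocks, so that together with the non-trivial intra-block eigenvalues they assemble precisely into $\prod_\nu (t^{\ell_\nu}+1)$ with the correct multiplicities.
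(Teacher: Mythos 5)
Your overall strategy is the paper's: replace $d_{\vec{\ell}}$ by a block-organized $\absW^\vartheta$-conjugate $d'_{\vec{\ell}}$, decompose $\gllie_\ell$ into $(\Ad(d'_{\vec{\ell}})\circ\vartheta)$-invariant pieces, compute the characteristic polynomial piece by piece, and strip off a factor of $(t+1)$ for the center. Your computations on $U_a$ (including the distribution of the $\mu$ eigenvalues $-1$, one to each block) and on the $U_{\rho\varphi}$ are correct. However, your decomposition double-counts part of the diagonal, and the resulting product is wrong by a factor of $(t^2-1)^{(\mu-1)/2}$. Concretely: you put all $2n$ off-central diagonal matrix units into $U_d$, contributing $(t^2-1)^{n}$, while simultaneously claiming that each $U_\tau$ contributes the full factor $\bigl((t^{2\ell_\tau}-1)/(t^2-1)\bigr)^{(\ell_\tau-1)/2}$, which has degree $(\ell_\tau-1)^2$. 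A degree count shows these cannot both hold: the degrees of your factors sum to $\ell + 2n + \sum_\tau(\ell_\tau-1)^2 + \bigl(\ell^2-\sum_\nu \ell_\nu^2\bigr) = \ell^2 + \mu - 1$, whereas $\dim\gllie_\ell = \ell^2$. Multiplying out your contributions in fact gives $(t^2-1)^{(\mu-1)/2}\cdot(t+1)\cdot q_{n_{\vec{\ell}}}(t)$, so after dividing by $(t+1)$ you land on $(t^2-1)^{(\mu-1)/2}\,q_{n_{\vec{\ell}}}(t)$, which agrees with the stated answer only when $\mu=1$.

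The repair is precisely where the odd case differs from the even one, and it is what the paper does: only the pairs $\bigl(E(i,i),E(\ell-i+1,\ell-i+1)\bigr)$ with $1\le i\le n-(\mu-1)/2$ go into $U_d$, so that $U_d$ contributes $(t^2-1)^{\,n-(\mu-1)/2}$. The remaining $(\mu-1)/2$ diagonal pairs, sitting at the padding positions where $d'_i=0$, must be absorbed into the blocks $U_\tau$ (this is the role of the third clause in the definition of $\mathcal{P}$); they are needed there so that each $U_\tau$ really has dimension $(\ell_\tau-1)^2$ and characteristic polynomial $\bigl((t^{2\ell_\tau}-1)/(t^2-1)\bigr)^{(\ell_\tau-1)/2}$. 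With that correction the powers of $t^2-1$ cancel exactly, since $\sum_\tau(\ell_\tau-1)/2 = n-(\mu-1)/2$, and the product on $\gllie_\ell$ is $(t+1)\,q_{n_{\vec{\ell}}}(t)$ as required.
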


\begin{proof}
We have that $\ell = 2n+1$,  $m = 2 \lcm (\ell_1, \ell_2, \ldots, \ell_\mu)$, and $\xi$ is a primitive $m^{\text{th}}$ root of unity. We first define a diagonal element $d'_{\vec{\ell}}$ that has the same entries as $d_{\vec{\ell}}$, but is more amenable to computation.   For $1 \leq \nu \leq \mu$, define $\tilde{\ell}_\nu = (\ell_\nu -1)/2$.

\begin{equation*}
\begin{split}
d'_{\vec{\ell}} =\Diag(\xi&^{\tilde{\ell}_\mu(m/2 \ell_\mu)} ,   \xi^{(\tilde{\ell}_\mu -1)(m/2 \ell_\mu)} , \ldots, \xi^{2(m/2 \ell_\mu)}, \xi^{m/2 \ell_\mu},\\
&\xi^{\tilde{\ell}_{\mu-1}(m/2 \ell_{\mu-1})} ,   \xi^{(\tilde{\ell}_{\mu-1}- 1)(m/2 \ell_{\mu-1})} , \ldots, \xi^{2(m/2 \ell_{\mu-1})}, \xi^{m/2 \ell_{\mu-1}},\\
& \ldots ,\\
&\xi^{\tilde{\ell}_1(m/2 \ell_1)} , \xi^{(\tilde{\ell}_1 -1)(m/2 \ell_1)}, \ldots , \xi^{2(m/2 \ell_1)}, \xi^{m/2 \ell_1}, \\
& \xi^{0}, \cdots , \xi^{0}, 1 = \xi^{0}, \xi^{0} \ldots, \xi^{0}, \\
& \xi^{-m/2 \ell_1}, \xi^{-2(m/2 \ell_1)}, \ldots , \xi^{-(\tilde{\ell}_1 -1)(m/2 \ell_1)} , \xi^{-\tilde{\ell}_1 (m/2 \ell_1)}, \\
& \ldots ,\\
&\xi^{-m/2 \ell_{\mu-1} }, \xi^{-2(m/2 \ell_{\mu-1})}, \ldots, \xi^{-(\tilde{\ell}_{\mu-1} -1)(m/2 \ell_{\mu-1})} , \xi^{-\tilde{\ell}_{\mu-1}(m/2 \ell_{\mu-1})}, \\
&\xi^{-m/2 \ell_{\mu} }, \xi^{-2(m/2 \ell_{\mu})}, \ldots, \xi^{-(\tilde{\ell}_{\mu} -1)(m/2 \ell_\mu)} , \xi^{-\tilde{\ell}_{\mu}(m/2 \ell_{\mu})}).
\end{split}
\end{equation*}
In the middle of the matrix there are $\mu$ copies of $\xi^{0}$ centered around the $1 = \xi^0$ that occurs in the center of the matrix. 
Since $d'_{\vec{\ell}}$ is $W^\vartheta$-conjugate to $d_{\vec{\ell}}$, it is enough to compute the characteristic polynomial of $\Ad(d'_{\vec{\ell}}) \circ \vartheta$ acting on $\sllie_\ell$.  Thus, it is enough to show that the characteristic polynomial of $\Ad(d'_{\vec{\ell}}) \circ \vartheta$ acting on $\gllie_\ell$ is
$$  (t+1) \cdot q_{n_{\vec{\ell}}}(t) =  
 \left( \prod_{\nu=1}^\mu (t^{\ell_\nu} + 1) \right) 
 \cdot  \left(    \prod_{\tau=1}^{\mu} (t^{2\ell_\tau} - 1)^{ (\ell_\tau-1)/2} \right) 
 \cdot \left(    \prod_{1 \leq \rho < \varphi \leq \mu} (t^{2 \lcm(\ell_\rho,\ell_\varphi)} - 1)^{ \gcd(\ell_\rho,\ell_\varphi)} \right). $$

For $1 \leq i \leq \ell$, define $d'_i$ by $(d'_{\vec{\ell}})_{ii} = \xi^{d'_i}$.  That is, the power of $\xi$ appearing in the $i^{\text{th}}$ diagonal element of $d'_{\vec{\ell}}$ is $d'_i$.

For $1 \leq i \leq \ell$  we have $(\Ad(d'_{\vec{\ell}}) \circ \vartheta ) E(i, \ell-i+1) =  - \xi^{2d'_i} E(i, \ell-i+1)$.  Thus, the vector space $U_a$ of anti-diagonal elements in $\gllie_\ell$ is $(\Ad(d'_{\vec{\ell}}) \circ \vartheta$)-stable and the characteristic polynomial for the action of $\Ad(d'_{\vec{\ell}}) \circ \vartheta$ on $U_a$ is 
$$\prod_{i=1}^{\ell} (t + \xi^{2d'_i}) =    \prod_{\nu = 1}^{\mu} \prod_{i=1}^{\ell_\nu}(t + \xi^{2i(m/2\ell_\nu)}),$$
and we have 
$$  \prod_{\nu = 1}^{\mu} \prod_{i=1}^{\ell_\nu}(t + \xi^{2i(m/2\ell_\nu)})  =    \prod_{\nu = 1}^{\mu} \prod_{i=1}^{\ell_\nu}(t - \xi^{2i(m/2 \ell_\nu)+m/2}) = \prod_{\nu = 1}^{\mu} \prod_{i=1}^{\ell_\nu}(t - \xi^{(2i + \ell_\nu)(m/2\ell_\nu)}) = \prod_{\nu = 1}^{\mu} (t^{\ell_\nu} + 1).$$
Thus, the characteristic polynomial for the action of $\Ad(d'_{\vec{\ell}}) \circ \vartheta$ on $U_a$ is
$$ \prod_{\nu = 1}^{\mu} (t^{\ell_\nu} + 1).$$

For $1 \leq i \leq n- (\mu-1)/2$  we have $(\Ad(d'_{\vec{\ell}}) \circ \vartheta )E(i,i) = -E(\ell -i + 1, \ell-i+1)$ and $(\Ad(d'_{\vec{\ell}}) \circ \vartheta) E(\ell -i + 1, \ell-i+1) = -E(i,i)$.  Thus, the  vector space $U_d$  spanned by
$$(E(i,i), E(\ell -i + 1, \ell-i+1) \, | \, 1 \leq i \leq n - (\mu-1)/2)$$
in $\gllie_\ell$ is  $(\Ad(d'_{\vec{\ell}}) \circ \vartheta)$-stable, and the characteristic polynomial for the action of $\Ad(d'_{\vec{\ell}}) \circ \vartheta$ on $U_d$ is $(t^2-1)^{n-(\mu-1)/2}$.

For pairs $(i,j)$ where $1 \leq i \leq n$ and $i < j \leq \ell - i $ we have 
$$(\Ad(d'_{\vec{\ell}}) \circ \vartheta ) E(i, j) = \xi^{d'_i - d'_j} E(\ell - j + 1, \ell-i+1)$$
and 
$$(\Ad(d'_{\vec{\ell}}) \circ \vartheta ) E(\ell - j + 1, \ell-i+1) =  \xi^{d'_i - d'_j} E(i,j).$$
Similarly, for pairs $(i,j)$ where $1 \leq j \leq n$ and $j < i \leq \ell - j $ we have 
$$(\Ad(d'_{\vec{\ell}}) \circ \vartheta ) E(i, j) =  \xi^{d'_i - d'_j} E(\ell - j + 1, \ell-i+1)$$
and 
$$(\Ad(d'_{\vec{\ell}}) \circ \vartheta ) E(\ell - j + 1, \ell-i+1) =  \xi^{d'_i - d'_j} E(i,j).$$
Let  $\mathcal{P}$ denote the set of pairs $(i,j)$ such that (exactly) one of the following is true:
\begin{itemize}
\item $1 \leq i  \leq n$ and $i < j \leq \ell - i $, 
\item $1 \leq j  \leq n$ and $j < i \leq \ell - j$, or
\item $ i = j$ and $n-(\mu-1)/2 < i \leq n$.
\end{itemize}
Then, $U$, the span of the elements 
$$(   E(i, j), E(\ell - j + 1, \ell-i+1)  \, | \, \text{ $(i,j) \in \mathcal{P}$ } ),$$
is $(\Ad(d'_{\vec{\ell}}) \circ \vartheta)$-stable since the span of any pair $( E(i, j), E(\ell - j + 1, \ell-i+1))$ occurring in the definition of $U$ is  $(\Ad(d'_{\vec{\ell}}) \circ \vartheta)$-stable.  Moreover, $U$ is a vector space complement in $\gllie_\ell$ to the vector subspace $U_a \oplus U_d$ of  $\gllie_\ell$. 
We are going to break $U$ into $\mu(\mu+1)/2$ smaller $(\Ad(d'_{\vec{\ell}}) \circ \vartheta)$-stable subspaces, and evaluate the characteristic polynomial of 
the action of $\Ad(d'_{\vec{\ell}}) \circ \vartheta$ on each of these smaller spaces.

For $0 \leq \nu \leq \mu$, define  $\ell''_\nu = \sum_{j=\nu+1}^\mu \tilde{\ell}_j$.  Note that $\ell''_0 = n - (\mu-1)/2$ and $\ell''_\mu = 0$.  For $1 \leq \nu \leq \mu$, define the length $\ell_\nu$ lists 
$$S^r_\nu = (\ell''_{\nu} +1, \ell''_{\nu} +2, \ldots ,  \ell''_{\nu} + \tilde{\ell}_{\nu},  n + (\mu-1)/2 - \nu + 2, \ell - (\tilde{\ell}_\nu + \ell''_\nu)+ 1, \ell - (\tilde{\ell}_\nu + \ell''_\nu)+ 2, \ldots, \ell - \ell''_{\nu})  $$
and
$$S^c_\nu = (\ell''_{\nu} +1, \ell''_{\nu} +2, \ldots ,  \ell''_{\nu} + \tilde{\ell}_{\nu},  n - (\mu-1)/2 + \nu , \ell - (\tilde{\ell}_\nu + \ell''_\nu) +1, \ell - (\tilde{\ell}_\nu + \ell''_\nu)+ 2, \ldots, \ell - \ell''_{\nu}).$$
Note that $S^r_\nu$ and $S^c_\nu$ differ only at the middle element.
For $1 \leq \tau \leq \mu$ let $U_\tau$ denote the subspace of $U$ spanned by  
$$(   E(i, j), E(\ell - j + 1, \ell-i+1)  \, | \,  \text{ $(i,j) \in  ( S_\tau^r \times S_\tau^c) \cap \mathcal{P}$ }).$$
For $1 \leq \rho <  \varphi \leq \mu$ let $U_{\rho \varphi}$ denote the subspace of $U$ spanned by  
$$(   E(i, j), E(\ell - j + 1, \ell-i+1)  \, | \,  \text{ $(i,j) \in  (S_\rho^r \times S_\tau^c) \cap \mathcal{P} $}).$$
We have the decomposition 
$$U = \left(\bigoplus_{\tau = 1}^\mu U_\tau \right) \oplus \left( \bigoplus_{1 \leq \rho < \varphi \leq \mu} U_{\rho \varphi} \right)$$
of $U$ into $(\Ad(d'_{\vec{\ell}}) \circ \vartheta)$-invariant subspaces,
and we will now compute the characteristic polynomial of $\Ad(d'_{\vec{\ell}}) \circ \vartheta$ on each of these subspaces.

Fix $\tau$ with $1 \leq \tau \leq \mu$.  By examining ``diagonals'' that are parallel to the main diagonal, we see that the  characteristic polynomial for the action of $\Ad(d'_{\vec{\ell}}) \circ \vartheta$ on $U_\tau$ is 
$$   \prod_{k=1}^{\ell_{\tau}-1} (t^2 - \xi^{k (m/\ell_\tau)})^{(\ell_\tau  - 1)/2} = \left(\frac{(t^{2 \ell_\tau} -1)}{t^2-1} \right)^{(\ell_\tau  - 1)/2}$$

Now fix a pair $(\rho, \varphi)$ with $1 \leq \rho < \varphi \leq \mu$.
For $(i,j) \in \mathcal{P} \cap (S^r_\rho \times S^c_\varphi)$, we have that the characteristic polynomial for the action of $\Ad(d'_{\vec{\ell}}) \circ \vartheta$  on the span of the pair $(E(i, j), E(\ell - j + 1, \ell-i+1))$ is $t^2 - \xi^{2(d'_i - d'_j)}$.  Since the map $(i,j) \mapsto \xi^{2(d'_i - d'_j)}$ from $\mathcal{P} \cap (S^r_\rho \times S^c_\varphi)$ to $\C^\times$ has both (a) fibers of the same cardinality and (b) the  same image as the map $\mu_{\ell_\rho} \times \mu_{\ell_\varphi}   \rightarrow \C^\times$ which sends $(a,b)$ to $a \cdot b\inv$, we conclude that the characteristic polynomial for the action of $\Ad(d'_{\vec{\ell}}) \circ \vartheta$  on $U_{\rho \varphi}$ is 
$$ \prod_{\zeta \in \mu_{\ell_\rho}, \eta \in \mu_{\ell_\varphi}} (t^2 - (\zeta \mu)) = \left( \prod_{\delta \in \mu_{\lcm(\ell_\rho,\ell_\varphi)}} (t^2 - \delta) \right)^{\gcd(\ell_\rho,\ell_\varphi)} = (t^{2\lcm(\ell_\rho, \ell_\varphi)} - 1)^{\gcd(\ell_\rho, \ell_\varphi)}.$$

We now put it all together.
As an $\Ad(d'_{\vec{\ell}}) \circ \vartheta$-module, we have
$$\gllie_\ell = U_a \oplus U_d \oplus \left(\bigoplus_{\tau = 1}^\mu U_\tau \right) \oplus \left( \bigoplus_{1 \leq \rho < \varphi \leq \mu} U_{\rho \varphi} \right).$$
Thus, the characteristic polynomial for the action of $\Ad(d'_{\vec{\ell}}) \circ \vartheta$ on $\gllie_{\ell}$ is
\begin{equation*}
\left(\prod_{\nu = 1}^{\mu} (t^{\ell_\nu} + 1) \right) \cdot (t^2-1)^{n-(\mu-1)/2}
\cdot \left( \prod_{\tau = 1}^{\mu} \left(\frac{(t^{2 \ell_\tau} -1)}{t^2-1} \right)^{(\ell_\tau  - 1)/2} \right) \cdot \left( \prod_{{1 \leq \rho < \varphi \leq \mu}} (t^{2\lcm(\ell_\rho, \ell_\varphi)} - 1)^{\gcd(\ell_\rho, \ell_\varphi)} \right)
\end{equation*}
which, after simplifying, is $(t+1) \cdot q_{n_{\vec{\ell}}}$.
\end{proof}

\begin{lemma} \label{lem:evencase}
$d_{\vec{\ell}}$ is $\vartheta$-conjugate to $n_{\vec{\ell}}$.
\end{lemma}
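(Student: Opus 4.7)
The plan is to mirror the strategy used to prove Lemma~\ref{lem:oddcase} in the even case, now applied to $\ell=2n+1$. First, by Lemma~\ref{lem:sigmaconj}, $n_{\vec{\ell}}$ is $\vartheta$-conjugate in $G$ to some $\lambda(\xi)$ with $\lambda\in\X_*(A^\vartheta)$. In the realization of Section~\ref{sec:inj2Alminusone}, a cocharacter $\lambda\in\X_*(A^\vartheta)$ must have the form
\[\lambda(t) = \Diag(t^{x_1}, t^{x_2}, \ldots, t^{x_n}, 1, t^{-x_n}, \ldots, t^{-x_2}, t^{-x_1})\]
(the central entry being forced to $1$, and the remaining exponents integers). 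Invoking Remark~\ref{rem:decreasing}, we may further arrange $m/2 \geq x_1 \geq x_2 \geq \cdots \geq x_n \geq 0$. The goal is to show that $x_i = \sigma_i$ for all $i$, so that $\lambda(\xi) = d_{\vec{\ell}}$.

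Since $\lambda(\xi)$ and $n_{\vec{\ell}}$ are $\vartheta$-conjugate, their twisted adjoint actions on $\sllie_\ell$ share a characteristic polynomial; by Lemma~\ref{lem:evencasechar} this common polynomial is $q_{n_{\vec{\ell}}}$. Next I would repeat the block decomposition of the proof of Lemma~\ref{lem:evencasechar} for a general $\lambda(\xi)$ (anti-diagonal piece $U_a$, diagonal remainder $U_d$, interior blocks $U_\tau$, and cross blocks $U_{\rho\varphi}$), obtaining that the anti-diagonal contribution to the characteristic polynomial on $\gllie_\ell$ is
\[\prod_{i=1}^{\ell}(t + \xi^{2\lambda_i}) = (t+1)\prod_{i=1}^{n}(t + \xi^{2x_i})(t + \xi^{-2x_i}),\]
while the contributions from $U_d$, $U_\tau$, and $U_{\rho\varphi}$ all factor through polynomials of the form $t^{2k}-1$, hence have root sets closed under negation.

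The crux is then the observation that within the factorization of $q_{n_{\vec{\ell}}}$ given by Lemma~\ref{lem:evencasechar}, the block $\prod_{\nu=1}^{\mu}(t^{\ell_\nu}+1)$ is characterized by its non-symmetric roots: because every $\ell_\nu$ is odd, the negative of a root of $t^{\ell_\nu}+1$ is a root of $t^{\ell_\nu}-1$ and so lies in a structurally different factor. Matching the non-symmetric parts of the two factorizations of $q_{n_{\vec{\ell}}}$ yields
\[(t+1)\prod_{i=1}^{n}(t + \xi^{2x_i})(t + \xi^{-2x_i}) = \prod_{\nu=1}^{\mu}(t^{\ell_\nu}+1),\]
together with the same identity with $\sigma_i$ in place of $x_i$ (this latter identity is precisely the anti-diagonal computation carried out inside Lemma~\ref{lem:evencasechar}). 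Comparing unordered multisets of roots across these two identities, and using the decreasing orderings of $(x_i)$ and $(\sigma_i)$ inside $[0, m/2]$, will force $x_i = \sigma_i$ for $1 \leq i \leq n$.

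The hardest step will be the clean ``symmetric vs.\ non-symmetric'' separation of roots, in particular the bookkeeping around $\pm 1$: the value $-1$ appears as a root of each $(t^{\ell_\nu}+1)$ (since $\ell_\nu$ is odd), as the central $(t+1)$ in the anti-diagonal contribution of $\lambda(\xi)$, and, when some $x_i$ is $0$ or $m/2$, also in the corresponding factor $(t+\xi^{\pm 2x_i})$. Verifying that the multiplicity accounting forces the displayed identity rather than allowing ``leakage'' between the non-symmetric block and the symmetric blocks is the main technical point, and it should be handled by the same explicit multiplicity computation already performed in the proof of Lemma~\ref{lem:evencasechar}.
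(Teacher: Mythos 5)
Your proposal follows essentially the same route as the paper's proof: reduce via Lemma~\ref{lem:sigmaconj} and Remark~\ref{rem:decreasing} to an element $\lambda(\xi)\in A^\vartheta$ with integer exponents $m/2\geq x_1\geq\cdots\geq x_n\geq 0$, equate characteristic polynomials, isolate the part of $q_{n_{\vec{\ell}}}$ whose root multiset is not closed under negation (which is exactly $p_{\vec{\ell}}$ from Equation~\ref{equ:pl}), and match it against the anti-diagonal contribution to force $x_i=\sigma_i$. The ``$\pm 1$ bookkeeping'' you flag as the main technical point is precisely what the paper resolves by comparing with Lemma~\ref{lem:evencasechar} and the ordering constraints, so your plan is sound and matches the published argument.
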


\begin{proof}
If $d'' = \Diag(\varepsilon^{a_n}, \varepsilon^{a_{n-1}}, \ldots  \varepsilon^{a_1}, \varepsilon^{a_{n+1}}, \varepsilon^{a_{n+2}}, \ldots ,\varepsilon^{a_{2n+1}})$ where $a_j = -a_{2n+2-j}$ for $j\geq n + 1$, then  
the characteristic polynomial of $d'' \circ \vartheta$ acting on $\sllie_{2n+1}$ is
$$ {(t^2 -1)^n} \cdot  \prod_{i = 1}^{n} (t - \varepsilon^{2a_i})(t-\varepsilon^{-2a_i}) \cdot  \prod_{1 \leq i < j \leq 2n + 1 - i} (t^2-\varepsilon^{2(a_i - a_j)}) (t^2-\varepsilon^{-2(a_i - a_j)}).$$     
Note that $a_{n+1} = 0$, and therefore for $1 \leq i \leq n$
the factor $(t^2 - \varepsilon^{2a_i}) = (t^2-\varepsilon^{2(a_i - a_{n+1})})$ occurs in the characteristic polynomial of $d'' \circ \vartheta$ acting on $\sllie_{2n+1}$. 

Since the eigenvalues of an order $m$ element must be $m^{\text{th}}$ roots of unity, from the above paragraph we see that 
any element $d' \in A^\vartheta$ for which the order of $d' \rtimes \vartheta$ is $m$ can be chosen to look like 
$$d' = \Diag(\xi^{x_n}, \xi^{x_{n-1}}, \ldots , 
\xi^{x_1}, \xi^{x_{n+1}}, \xi^{x_{n+2}}\ldots \xi^{x_{2n+1}})$$
where $\xi$ is our fixed  $m^{\text{th}}$ root of unity, $x_j = - x_{2n+2-j}$ for $j \geq n+1$, and  the $x_i \in \Z$ satisfy $m/2 \geq  x_1 \geq x_2 \geq \cdots \geq x_n \geq x_{n+1} =  0$.   We will show that $d' = d_{\vec{\ell}}$.

The characteristic polynomial of $d' \circ \vartheta$ acting on $\sllie_{2n+1}$ is
$$ {(t^2 -1)^n} \cdot  \prod_{i = 1}^{n} (t - \xi^{2x_i})(t-\xi^{-2x_i}) \cdot  \prod_{1 \leq i < j \leq 2n +1 - i} (t^2-\xi^{2(x_i - x_j)}) (t^2-\xi^{-2(x_i - x_j)}).$$
In the polynomial $q_{n_{\vec{\ell}}}$ every root   is paired with its additive inverse, except for the roots that appear in $p_{\vec{\ell}}$, the polynomial described in Equation~\ref{equ:pl} that is associated to $n_{\vec{\ell}}$.   Thus, if $d'$ and $n_{\vec{\ell}}$ are $\vartheta$-conjugate, then we must have
$$   \prod_{i = 1}^{n} (t - \xi^{2x_i})(t-\xi^{-2x_i}) = p_{\vec{\ell}}  =  \prod_{i = 1}^{n} (t - \xi^{2\sigma_i})(t-\xi^{-2\sigma_i}).$$
Thanks to Lemma~\ref{lem:evencasechar}, this implies that there is a bijective map $f$ from the set $\{1,2,\ldots n \}$ to itself such that $x_j \in \scoeff_{f(j)} + m \Z$ for all $1 \leq j \leq n$. 
    Since $m/2 >  x_1 \geq x_2 \geq \cdots \geq x_n \geq 0$ and $m/2 >  \sigma_1 \geq \sigma_2 \geq \cdots \geq \sigma_n \geq 0$, we conclude that $x_j = \sigma_j$ for $1 \leq j \leq n$.
\end{proof}

\subsection{How to create a Kac diagram for \texorpdfstring{$n_{\vec{\ell}}$ for $\lsup{2}D_{\ell+1}$ with $\ell \geq 2$}{kac for 2D}}  
\label{sec:kacdiagram2Dlpo}

We adopt the notation of Section~\ref{sec:inj2Dlplusone}. 
 Fix a partition $\vec{\ell}$ of $(\ell+1)$ with an odd number of parts.   We will show how to construct the Kac diagram for   $n_{\vec{\ell}} \rtimes \vartheta$.

We have $f = 2$.  We take the simple roots $\gamma$ in $\Delta_\vartheta = \Delta$ to be the roots $\gamma_k = \alpha_k$ for $1 \leq k < \ell$ and $\gamma_\ell = \alpha_{\ell -1} + \alpha_\ell = 2e_\ell$.  
For fundamental coweights with respect to our basis we take
$\check{\mu}_i = e_1 + e_2 + \cdots + e_i$ for $1 \leq i < \ell$ and $\check{\mu}_{\ell} = 1/2 (e_1 + e_2 + \cdots e_{\ell-2}  +  e_{\ell-1} + e_\ell)$.
The affine Dynkin diagram is 
\[
\begin{tikzpicture}[start chain]
\unode{0}
\unodenj{1}
\unode{2}
\dydots
\unode{\ell-2}
\unode{\ell-1}
\unodenj{\ell}
\path (chain-1) -- node[anchor=mid] {\(\Leftarrow\)} (chain-2);
\path (chain-6) -- node[anchor=mid] {\(\Rightarrow\)} (chain-7);
\end{tikzpicture}
\]
and the $b_\gamma$ are given by the diagram
\[
\begin{tikzpicture}[start chain]
\enode{1}
\enodenj{1}
\enode{1}
\dydots
\enode{1}
\enode{1}
\enodenj{1}
\path (chain-1) -- node[anchor=mid] {\(\Leftarrow\)} (chain-2);
\path (chain-6) -- node[anchor=mid] {\(\Rightarrow\)} (chain-7);
\end{tikzpicture}
\]

A Kac diagram describes a diagonal matrix in $\SO_{2 \ell + 2}$ that is $\vartheta$-conjugate to $n_{\vec{\ell}}$ in $G$.  Thus, we want to find a diagonal matrix $d_{\vec{\ell}}$ in $\SO_{2 \ell+2}$ such that the characteristic polynomial of $d_{\vec{\ell}} \rtimes \vartheta$ for the standard action on  $\C^{2\ell +2}$ is 
$$q_{n_{\vec{\ell}}}(t) = \prod_{\nu = 1}^\mu (t^{2 \ell_\nu} - 1).$$

Let $m = 2\lcm(\ell_1, \ell_2, \ldots , \ell_\mu)$ and let $\xi$ be a primitive $m^{\text{th}}$ root of unity. We have
$$q_{n_{\vec{\ell}}}(t) = \prod_{\nu = 1}^\mu (t^{2 \ell_\nu} - 1) = \prod_{\nu = 1}^\mu (t-1)(t+1) \prod_{a_{\nu} = 1}^{\ell_\nu - 1} (t - \xi^{ma_{\nu}/2 \ell_\nu} )(t - \xi^{-ma_{\nu}/2 \ell_\nu} )$$
Guided by  Remark~\ref{rem:decreasing} we create a length $\ell$ decreasing list as follows: order the positive integers $ma_{\nu}/2 \ell_\nu$ in decreasing order, then pre-append $(\mu-1)/2$ copies of $m/2$ and post-append $(\mu-1)/2$ zeroes.  We thus obtain a list $(\scoeff_1 \geq \scoeff_2 \geq \scoeff_3 \geq \cdots \geq \scoeff_{\ell-1} \geq \scoeff_\ell)$.  Define $d_{\vec{\ell}} = \Diag(\xi^{\scoeff_1},  \xi^{\scoeff_2}, \ldots , \xi^{\scoeff_{\ell -1}}, \xi^{\scoeff_{\ell}},  1,1,\xi^{-\scoeff_{\ell}}, \xi^{-\scoeff_{\ell-1}}, \ldots ,  \xi^{-\scoeff_2}, \xi^{-\scoeff_1}) $ in $A^{\vartheta}$.  Then $d_{\vec{\ell}} \circ \vartheta$ acting on $\C^{2 \ell + 2}$ has characteristic polynomial $q_{n_{\vec{\ell}}}$.

 Since the linear factors $(t-1)$ and $(t+1)$ along with  $(t-\xi^{\scoeff_j})$ and $(t - \xi^{-\scoeff_j})$ for $1 \leq j \leq \ell$ are exactly the linear factors that must occur in  $q_{n_{\vec{\ell}}}$,   we conclude that, up to the $\vartheta$-conjugation in  $\absW$, $d_{\vec{\ell}}$ is the unique element of $A$ that is $\vartheta$-conjugate to $n_{\vec{\ell}}$ in $G$.

We now read off the Kac diagram for $n_{\vec{\ell}}$ from $d_{\vec{\ell}}$. 
 Note that $d_{\vec{\ell}} = \lambda_{\vec{\ell}}(\xi)$ where 
$$\lambda_{\vec{\ell}} =  ({\scoeff_{1} - \scoeff_{2}}) \check{\mu}_1 + ({\scoeff_{2} - \scoeff_{3}}) \check{\mu}_2  + \ldots  + (\scoeff_{\ell-1} - \scoeff_\ell) \check{\mu}_{\ell-1} + 2 \scoeff_\ell \check{\mu}_{\ell}.$$
Since $\dabs{\gamma_k}_\vartheta = 1$ for $1 \leq k < \ell$, $\dabs{\gamma_\ell}_\vartheta = 2$, and
$$\lambda_{\vec{\ell}}/m = \frac{1}{m} [ ({\scoeff_{1} - \scoeff_{2}}) \check{\mu}_1 + ({\scoeff_{2} - \scoeff_{3}}) \check{\mu}_2  + \ldots  + (\scoeff_{\ell-1} - \scoeff_\ell) \check{\mu}_{\ell-1} + 2 \scoeff_\ell \check{\mu}_{\ell}],$$
we have 
$s_{\gamma_\ell} = \scoeff_\ell$ and 
 $s_{\gamma_k} = (\scoeff_{k} - \scoeff_{k+1})$ for  $1 \leq k \leq \ell-1$.    The coefficient $s_{\gamma_0} = m/2 - \scoeff_1$ is derived using Equation~\ref{equ:equationform}.  Remove any factors that are common to all of the $s_\gamma$ for $\gamma \in \tilde{\Delta}$ and label the corresponding affine Kac diagram with the resulting $s_\gamma$. 

We have proved:
\begin{lemma}
Fix a partition $\vec{\ell} = (\ell_\mu, \ell_{\mu-1}, \ldots , \ell_2, \ell_1)$ of $(\ell+1)$ with $\mu$ odd and $\ell \geq 2$.  Let $m = 2\lcm(\ell_1, \ell_2, \ldots , \ell_\mu)$.  Append  $(\mu-1)/2$ copies of $m/2$ and $(\mu-1)/2$ zeroes to the list
$( ma_{\nu}/2 \ell_\nu \, | \, \text{$1 \leq \nu \leq \mu$  and $ 1 \leq a_\nu \leq \ell_\nu -1$} )$
and then place the elements of the resulting list in decreasing order: 
$(\scoeff_1 \geq \scoeff_2 \geq \scoeff_3 \geq \cdots \geq \scoeff_{\ell-1} \geq \scoeff_\ell)$.
After removing any factors that are common to all of the labels, the Kac diagram for   $n_{\vec{\ell}} \rtimes \vartheta$ in a group of type $\lsup{2}D_{\ell + 1}$ with $\ell \geq 2$ is given by
\[
\begin{tikzpicture}[start chain]
\enode{\rotatebox[origin=c]{90}{$m/2 - \sigma_1$}}
\enodenj{\rotatebox[origin=c]{90}{$\sigma_1 - \sigma_2$}}
\enode{\rotatebox[origin=c]{90}{$\sigma_2 - \sigma_3$}}
\dydots
\enode{\rotatebox[origin=c]{90}{$\sigma_{\ell-2} - \sigma_{\ell-1}$}}
\enode{\rotatebox[origin=c]{90}{$\sigma_{\ell-1} - \sigma_\ell$}}
\enodenj{\sigma_\ell}
\path (chain-1) -- node[anchor=mid] {\(\Leftarrow\)} (chain-2);
\path (chain-6) -- node[anchor=mid] {\(\Rightarrow\)} (chain-7);
\end{tikzpicture}. 
\qed
\]
\end{lemma}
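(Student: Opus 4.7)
My plan is to follow the same template used in the preceding lemmas for types $B_\ell$, $C_\ell$, $D_\ell$, and the two $\lsup{2}A$ cases: produce an explicit diagonal element $d_{\vec{\ell}} \in A^\vartheta$ that is $\vartheta$-conjugate to $n_{\vec{\ell}}$ in $G = \SO_{2\ell+2}$, write it in the form $\lambda_{\vec{\ell}}(\xi)$ for an explicit cocharacter $\lambda_{\vec{\ell}} \in \X_*(A^\vartheta)$, and then read off the Kac labels by applying Equations~\eqref{equ:equationxg} and~\eqref{equ:equationform}.

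First, I would recall from Section~\ref{sec:inj2Dlplusone} that the characteristic polynomial of the linear map $x \mapsto x J \cdot \vartheta$ applied to $n_{\vec{\ell}} J$ acting on $\C^{2\ell+2}$ is
\[
q_{n_{\vec{\ell}}}(t) = \prod_{\nu=1}^{\mu}(t^{2\ell_\nu}-1),
\]
which factors over any primitive $m$-th root of unity $\xi$ (with $m = 2\lcm(\ell_1,\dots,\ell_\mu)$) as a product of $(t-1)(t+1)$ from each block and the remaining linear factors $(t-\xi^{\pm m a_\nu/(2\ell_\nu)})$ for $1\leq a_\nu \leq \ell_\nu -1$. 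Since $\mu$ is odd, we get $\mu$ copies of $(t-1)$ and $\mu$ copies of $(t+1)$, of which exactly $(\mu-1)/2$ of each are forced to sit at $\pm 1 = \xi^0, \xi^{m/2}$ at the edges of the descending sequence; the other two are absorbed in the two middle $1$ entries of $d_{\vec\ell}$ (the $\vartheta$-fixed coordinates).

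Second, following Remark~\ref{rem:decreasing}, I would pre-append $(\mu-1)/2$ copies of $m/2$ and post-append $(\mu-1)/2$ copies of $0$ to the list of positive exponents $\{ma_\nu/(2\ell_\nu)\}$, sort into a decreasing sequence $\sigma_1 \geq \cdots \geq \sigma_\ell$, and set
\[
d_{\vec{\ell}} = \Diag(\xi^{\sigma_1}, \dots, \xi^{\sigma_\ell}, 1, 1, \xi^{-\sigma_\ell}, \dots, \xi^{-\sigma_1}) \in A^\vartheta.
\]
Since every linear factor of $q_{n_{\vec{\ell}}}$ must appear in the characteristic polynomial for the standard action of $d_{\vec{\ell}} J \cdot \vartheta$ (equivalently of $d_{\vec\ell}J$) on $\C^{2\ell+2}$, uniqueness (up to the action of $\absW^\vartheta$) forces any diagonal $\vartheta$-conjugate of $n_{\vec\ell}$ of the correct form to coincide with $d_{\vec\ell}$; hence $d_{\vec\ell}$ is $\vartheta$-conjugate to $n_{\vec\ell}$ in $G$.

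Third, I would write $d_{\vec{\ell}} = \lambda_{\vec{\ell}}(\xi)$ with
\[
\lambda_{\vec{\ell}} = (\sigma_1-\sigma_2)\check\mu_1 + (\sigma_2-\sigma_3)\check\mu_2 + \cdots + (\sigma_{\ell-1}-\sigma_\ell)\check\mu_{\ell-1} + 2\sigma_\ell \check\mu_\ell,
\]
which is in $\X_*(A^\vartheta)$ because the chosen $\check\mu_i$ are already $\vartheta$-fixed combinations of the fundamental coweights and $\check\mu_\ell$ picks up the factor of $2$ from $|\gamma_\ell|_\vartheta=2$. Plugging into Equation~\eqref{equ:equationxg} (with $f=2$) and dividing by the $|\gamma|_\vartheta$ factors yields $s_{\gamma_k} = \sigma_k - \sigma_{k+1}$ for $1 \leq k < \ell$ and $s_{\gamma_\ell} = \sigma_\ell$; then Equation~\eqref{equ:equationform}, using the $b_\gamma$ values computed in the affine Dynkin diagram above the lemma (all $b_\gamma = 1$), determines the remaining label $s_{\gamma_0} = m/2 - \sigma_1$. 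The main technical step — and the one I expect to require the most care — is justifying the choice of the middle-padding (the $(\mu-1)/2$ copies of $m/2$ and $(\mu-1)/2$ zeroes), as this is what actually ensures $\lambda_{\vec\ell}/m$ lies in the closure of $C_\vartheta$ and hence that the labels $s_\gamma$ are non-negative; this is the content of the decreasing-sequence condition in Remark~\ref{rem:decreasing} together with the parity constraint that $\mu$ be odd. After removing any common factor from the $s_\gamma$, we obtain the stated diagram, completing the proof.
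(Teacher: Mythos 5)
Your proposal is correct and follows essentially the same route as the paper: construct $d_{\vec{\ell}} = \Diag(\xi^{\sigma_1},\ldots,\xi^{\sigma_\ell},1,1,\xi^{-\sigma_\ell},\ldots,\xi^{-\sigma_1}) \in A^\vartheta$ from the factorization of $q_{n_{\vec{\ell}}}(t)=\prod_\nu (t^{2\ell_\nu}-1)$ with the stated padding, argue uniqueness up to the Weyl action from the forced linear factors, write $d_{\vec{\ell}}=\lambda_{\vec{\ell}}(\xi)$ with $\lambda_{\vec{\ell}}=\sum_k(\sigma_k-\sigma_{k+1})\check\mu_k+2\sigma_\ell\check\mu_\ell$, and extract the labels from Equations~(\ref{equ:equationxg}) and~(\ref{equ:equationform}). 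The details (the values $s_{\gamma_k}=\sigma_k-\sigma_{k+1}$, $s_{\gamma_\ell}=\sigma_\ell$, $s_{\gamma_0}=m/2-\sigma_1$, and the role of $\dabs{\gamma_\ell}_\vartheta=2$) all match the paper's computation.
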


 \begin{example}
As an example, here is the Kac diagram for $n_{(5,4,3)}$ in (twisted) $\SO_{24}$.
\[
\begin{tikzpicture}[start chain]
\enode{0}
\enodenj{12}
\enode{3}
\enode{5}
\enode{4}
\enode{6}
\enode{6}
\enode{4}
\enode{5}
\enode{3}
\enode{12}
\enodenj{0}
\path (chain-1) -- node[anchor=mid] {\(\Leftarrow\)} (chain-2);
\path (chain-11) -- node[anchor=mid] {\(\Rightarrow\)} (chain-12);
\end{tikzpicture} .
\]
\end{example}

\begin{example}
In Figure~ \ref{fig:2D3location} for groups of type $\lsup{2}D_3$ we show the location of the vertices $v_0$, $v_1$, and $v_2$ of the fundamental alcove as well as the points determined by $\lambda_{\vec{\ell}}/m$. The Kac diagram for $(3)$ is 
\(
\begin{tikzpicture}[start chain]
\enodenj{1}
\enodenj{1}
\enodenj{1}
\path (chain-1) -- node{\(\Leftarrow\)} (chain-2);
\path (chain-3) -- node{\(\Rightarrow\)} (chain-2);
\end{tikzpicture}
\)
and the Kac diagram for $(1,1,1)$ is 
\(
\begin{tikzpicture}[start chain]
\enodenj{0}
\enodenj{1}
\enodenj{0}
\path (chain-1) -- node{\(\Leftarrow\)} (chain-2);
\path (chain-3) -- node{\(\Rightarrow\)} (chain-2);
\end{tikzpicture}.
\)
\begin{figure}[ht]
\centering
\begin{tikzpicture}
\draw (0,0) 
  -- (5,0)
  -- (5,5)
  -- cycle;
  \draw[black, fill=white] (0,0) circle (.5ex);
\draw(-.3,-.4) node[anchor=south]{$v_0$};
 \draw[black,fill=white] (5,5) circle (.5ex);
\draw(5.3,4.9) node[anchor=south]{$v_2$};
 \draw[black,fill=black] (3.33,1.66) circle (.5ex);
\draw(3.63,1.66) node[anchor=north]{$\frac{\lambda_{(3)}}{6}$};
  \draw[black,fill=black] (5,0) circle (.5ex);
\draw(6.05,.38) node[anchor=north]{$v_1 = \frac{\lambda_{(1,1,1)}}{2}$};
\end{tikzpicture}
\caption{The location of points determined by  $\lambda_{\vec{\ell}}$ for groups of type $\lsup{2}D_3$ \label{fig:2D3location}}
\end{figure}
Even though nothing else in their derivation agrees, since $\lsup{2}A_3 \cong \lsup{2}D_3$ it is correct that the location of the points  in Figures~\ref{fig:2A3location} and~\ref{fig:2D3location} corresponding to the $\vartheta$-elliptic $\vartheta$-conjugacy classes in their respective Weyl groups  coincide.
\end{example}

During the publishing process, a typo was introduced to the table in~\cite[Section A.6]{reeder:thomae}. For the case $k$ even, $k \mid n$, and $k>2$ the Kac diagram should have a $0$ on the $\gamma_0$ node; that is, it should be 
\[
\begin{tikzpicture}[start chain]
\enodenj{0}
\enodenj{0}
\enode{0}
\enodenj{0}
\enode{1}
\enode{0}
\enode{0}
\enodenj{0}
\enode{1}
\enode{0}
\enode{0}
\enodenj{0}
\enode{1}
\enodenj{1}
\enode{0}
\enode{0}
\enodenj{0}
\enodenj{0}
\path (chain-1) -- node{\(\Leftarrow\)} (chain-2);
\path (chain-3) -- node{\(\cdots\)} (chain-4);
\path (chain-7) -- node{\(\cdots\)} (chain-8);
\path (chain-11) -- node{\(\cdots\)} (chain-12);
\path (chain-13) -- node{\(\cdots\)} (chain-14);
\path (chain-16) -- node{\(\cdots\)} (chain-17);
\path (chain-17) -- node{\(\Rightarrow\)} (chain-18);
\draw[decorate, decoration={  brace,  amplitude=10pt}] (0,.12)-- node[above=0.35cm]
{$k/2$ zeroes}(3,.12);
\draw[decorate, decoration={  brace,  amplitude=10pt}] (4.9,.12)-- node[above=0.35cm]
{$(k-1)$ zeroes}(6.9,.12);
\draw[decorate, decoration={  brace,  amplitude=10pt}] (8.8,.12)-- node[above=0.35cm]
{$(k-1)$ zeroes}(10.8,.12);
\draw[decorate, decoration={  brace,  amplitude=10pt}] (13.7,.12)-- node[above=0.35cm]
{$k/2$ zeroes}(16.7,.12);
\node[above=0.35cm] at (12.3,.12) {$\cdots$};
\end{tikzpicture}
\]
where there are $(n/k-1)$ strings of $(k-1)$ zeroes.  With this change,  the partitions (in the notation of \cite[Section A.6]{reeder:thomae}) for the Kac diagrams appearing there are (from top to bottom): $(n+1)$, $(n/2,n/2,1)$ for $n$ even, $(n/k, n/k, n/k, \ldots , n/k, n/k,1)$ for $2< k$ even and dividing $n$, and $((n+1)/k, (n+1)/k, (n+1)/k, \ldots , (n+1)/k, (n+1)/k)$ for $1 < k$ odd and dividing $n+1$.   This correspondence between Kac diagrams and partitions agrees with~\cite[Table~15]{reederetal:gradings}.     
 From Remark~\ref{rem:regell2Dl} these partitions correspond to the regular $\vartheta$-elliptic  elements in a twisted Weyl group  of type $\lsup{2}D_{n+1}$.

\subsection{Kac diagrams for the remaining types of groups} 
\label{sec:kacexceptional}

As noted in Section~\ref{sec:injexceptional}, the Kac diagrams 
associated to $\vartheta$-elliptic conjugacy classes in $\absW$ for the remaining types of groups ($G_2$, $F_4$, $E_6$, $E_7$, $E_8$, $\lsup{3}D_4$, or $\lsup{2}E_6$) are known.    See~\cite[Section~9]{adams-he-nie:from}.

\end{document}